
%
%
%
%

\documentclass[11pt]{article}
\textwidth 6.5in
\textheight 8.5in
\oddsidemargin 0in
\topmargin -0.5in
\usepackage{color}
\usepackage{amsmath,amsthm,amssymb,amsfonts}
\usepackage{epsfig}
\usepackage{graphicx}

\usepackage{todonotes}

\usepackage{bm}
\usepackage{dsfont}
\parskip5pt

\numberwithin{equation}{section}

\newtheorem{theorem}{\bf Theorem}[section]
\newtheorem{lemma}[theorem]{\bf Lemma}

\newtheorem{definition}[theorem]{\bf Definition}
\newtheorem{remark}[theorem]{\bf Remark}


\def\R{\mathbb R}
\def\N{\mathbb N}

\def\e{\varepsilon}
\def\trait (#1) (#2) (#3){\vrule width #1pt height #2pt depth #3pt}
\def\fin{\hfill\trait (0.1) (5) (0) \trait (5) (0.1) (0) \kern-5pt
\trait (5) (5) (-4.9) \trait (0.1) (5) (0)}

\newcommand{\be}{\begin{equation}}
\newcommand{\ee}{\end{equation}}
\newcommand{\baa}{\begin{array}}
\newcommand{\eaa}{\end{array}}
\newcommand{\ba}{\begin{eqnarray}}
\newcommand{\ea}{\end{eqnarray}}

\newcommand{\ban}{\begin{eqnarray*}}
\newcommand{\ean}{\end{eqnarray*}}

\def\e{\varepsilon}

\def\Ac{{\cal A}}
\def\Tc{{\cal T}}

\def\VFp{\mathbf{U}^+}
\def\VFm{\mathbf{U}^-}

\newcommand{\dtau}{\,\mathrm{d}\tau}
\newcommand{\dalpha}{\,\mathrm{d}\alpha}
\newcommand{\ds}{\,\mathrm{d}s}


\def\apg{\left\{}
\def\chg{\right\}}
\def\apt{\left(}
\def\cht{\right)}


\def\a{a}

\def\A{{\cal A}}

\def\Ta{{\cal T}_{x,t}}
\def\Taineps{{\cal T}_{x_\eps,t_\eps}}
\def\Taeps{{\cal T}_{x,t}^{\eps}}
\def\Tan{{\cal T}_{x_n,t_n}}
\def\Treg{{\cal T}^{\rm reg}_{x,t}}
\def\Tregineps{{\cal T}^{\rm reg}_{x_\eps,t_\eps}}
\def\Tregeps{{\cal T}^{\mathrm{reg},\eps}_{x,t}}
\def\HTreg{{H}^{\rm reg}_T}

\def\HT{{H}_T}

\def\Aoreg{A_0^{\rm reg}}
\def\Aoregxt{A_0^{\rm reg}(x,t)}

\def\Esing{E_{\rm sing}^{\eta}}

\def\eps{\varepsilon}
\renewcommand{\H}{\mathcal{H}}

\newcommand{\mB}{\mathcal{B}}
\newcommand{\cob}{\overline{\mathop{\rm co}}}

\newcommand{\trajxt}{X_{x,t}}
\newcommand{\trajxntn}{X_{x_n,t_n}}

\def\dottrajxt{\dot{X}_{x,t}}

\def\trajyali{Y_{x,t}^{i}}

\def\dottrajyali{{\dot Y}_{x,t}^{i}}
\def\trajyxki{Y_{x_k,t}^{i}}

\newcommand{\hyp}[2]{$(\mathbf{H}_{#1}^{#2})$}

\newcommand{\Gamij}{\Gamma_{\{i,j\}}}

\newcommand{\dH}{\mathrm{d}_\H}

\newcommand{\Ei}{\mathcal{E}_{i}}

\newcommand{\psg}[2]{\big\langle #1,#2\big\rangle}
\newcommand{\E}{\mathcal{E}}
\newcommand{\Q}[1]{{Q}^{(#1)}}
\DeclareMathOperator{\dist}{\mathrm{dist}}


\renewcommand{\H}{\mathcal{H}}

\def\HT{H_T}

\def\bH{b_{\H}}

\def\lH{\l_{\H}}

\def\HTreg{{ H}^{\rm reg}_T}

\def\nor{\mathbf{n}}

\def\HHm{\mathbb{H}^-}
\def\HHp{\mathbb{H}^+}
\def\tHHm{\tilde{\mathbb{H}}^-}


\begin{document}

\title{\bf A Bellman approach for regional optimal control problems in $\R^N$}
\author{G.~Barles\thanks{Laboratoire de
  Math\'ematiques et Physique Th\'eorique (UMR CNRS 6083), F\'ed\'eration Denis
  Poisson (FR CNRS 2964), Universit\'e Fran\c{c}ois Rabelais, Parc de Grandmont,
  37200 Tours, France.\newline \indent This work was partially supported by the ANR HJnet ANR-12-BS01-0008-01 and by EU under
  the 7th Framework Programme Marie Curie Initial Training Network
  ``FP7-PEOPLE-2010-ITN'', SADCO project, GA number 264735-SADCO.} 
   \and A.~Briani${}^{*}$ \and E.~Chasseigne${}^{*}$ } \maketitle
\begin{abstract}
This article is a continuation of a previous work where we studied infinite
horizon control problems for which the dynamic, running cost and control
space may be different in two half-spaces of some Euclidian space
$\R^N$. In this article we extend our results in several directions:
$(i)$ to more general domains; $(ii)$ by considering finite horizon control
problems; $(iii)$ by weakening the controlability assumptions. We use a Bellman approach and our main
results are to identify the right Hamilton-Jacobi-Bellman Equation
(and in particular the right conditions to be put on the interfaces
separating the regions where the dynamic and running cost are different)
and to provide the maximal and minimal solutions, as well as conditions for
uniqueness. We also provide stability results for such equations.

\end{abstract}

 \noindent {\bf Key-words}: Optimal control, discontinuous dynamic, Bellman Equation, viscosity solutions.
\\
{\bf AMS Class. No}:
49L20,   
49L25,   
35F21. 
\section{Introduction}

This article is a continuation of \cite{BBC1} where we studied infinite
horizon control problems for which the dynamic, running cost and control
space may be different in two half-spaces of some  Euclidian space
$\R^N$. This study was made through the Bellman approach and our main
results where to identify the right Hamilton-Jacobi-Bellman Equation
(and in particular the right conditions to be put on the hyperplane
separating the regions where the dynamic and running cost are different)
and to provide the maximal and minimal solutions, as well as conditions for
uniqueness. The aim of the present paper is three-fold: $(i)$ to extend these
results to more general domains; $(ii)$ to consider also finite horizon control
problems; $(iii)$ last but not least, to weaken the controlability
assumption made in \cite{BBC1}. We also emphasize the stability properties for such equations
which are a little bit different from the classical ones.

To be more specific, we recall that, in the classical theory (see for example Lions \cite{L}, Fleming \&
Soner \cite{fs}, Bardi \& Capuzzo Dolcetta \cite{BCD}),
Hamilton-Jacobi-Bellman Equation for finite horizon control problems in
the whole space $\R^N$ have the form
\begin{equation}\label{eq:gen}
      u_t+H(x,t,Du) = 0  \quad\hbox{ in  }\R^N\times (0,T)\,, \\
\end{equation} 
where the Hamiltonian $H$ is typically given by
\begin{equation}\label{def:H.0}
	H(x,t,p):=\sup_{ \alpha\in A} \big\{-b(x,t,\alpha) \cdot p -
	    l(x,t,\alpha) \big\}\,.
\end{equation} 
The control space $A$ is assumed to be compact, the dynamic $b$ and running cost $l$ are supposed to be continuous functions which are Lipschitz continuous in $x$, so that $H$
is continuous and has suitable properties ensuring existence and
uniqueness of a solution to~\eqref{eq:gen}.

In this paper, as we already mentioned  above, we have different dynamics and running costs in different regions. In other words, the functions $b$ and $l$ are no longer assumed to be continous anymore when crossing the boundaries of the different regions,
which implies that the Hamiltonian $H$ in~\eqref{def:H.0} also
presents discontinuities. Hence, getting suitable comparison and
uniqueness results for~\eqref{eq:gen} in this setting is not obvious at
all and the aim of this paper is to give precise answers to these
questions.

To be more precise, we are going to decompose $\R^N$ using a collection
$(\Omega_i)_{i\in I}$ of regular open subsets of $\R^N$ such that each
point $x\in \R^N$ either lies inside one (and only one) $\Omega_i$, or
is located on the boundary of exactly two sets $\Omega_i$. Because of
the (regularity) assumptions we are going to use, we can in fact reduce
this collection to two domains $\Omega_1, \Omega_2$ : we refer to
Section~\ref{sec:FRE} for comments on this reduction. More precisely we
assume that \begin{itemize} 
    \item[\hyp{\Omega}{}] $\R^N=\Omega_1\cup\Omega_2\cup\H$ with
	  $\Omega_1\cap\Omega_2=\emptyset$ and $\H=\partial
	  \Omega_1=\partial\Omega_2$ is a $W^{2,\infty}$-hypersurface in $\R^N$.
\end{itemize}

A consequence of this assumption is the following : if $\dH(\cdot)$ denotes the signed distance function to $\H$ which is positive in $\Omega_1$ and negative in $\Omega_2$, then $\dH$ is $W^{2,\infty}$ in a neighborhood of $\H$. Moreover, for $x\in\H$, $D\dH (x)=-\mathbf{n}_1 (x)=\mathbf{n}_2 (x)$ where, for $i=1,2$, $\mathbf{n}_i(x)$ is the unit normal vector to $\partial \Omega_i$ pointing outwards $\Omega_i$. We will use the notation $-\mathbf{n}_1(x)$ or $\mathbf{n}_2(x)$ for the gradient of $\dH$ at $x$, even if $x$ does not belong to $\H$.
 
In each $\Omega_i$ ($i=1,2$), we have a ``classical'' finite-horizon control
problem and the equation can be written as
\begin{equation}\label{Bellman-Om}
\begin{array}{cc} 
  u_t+H_i(x,t,Du) = 0     &   \hbox{ in   }\Omega_i \times (0,T)\,, \\
\end{array} \end{equation} 
for some $T>0$, where $H_i$ is given by
\begin{equation}  \label{def:Ham} 
  H_i(x,t,p):=\sup_{ \alpha_i \in A_i} \apg -b_i(x,t,\alpha_i) \cdot p
  - l_i(x,t,\alpha_i) \chg\,.
\end{equation} 
The $b_i, l_i$ are at least continuous functions defined on $\overline{\Omega_i}\times (0,T) \times A_i$, the control space $A_i$ being compact metric spaces; precise assumptions will be given later on.

Of course, one has to write down an equation on the whole space $\R^N$
(and in particular on $\H$) and this can be done using viscosity
solutions' theory (\cite{Son}, \cite{Ba}, \cite{BCD}). One can consider
Equation~\eqref{eq:gen} with $H=H_i$ on $\Omega_i$ and use Ishii's
definition  of viscosity solutions for discontinuous Hamiltonians (cf.
\cite{Idef}) which reads
\begin{align*}(u^*)_t + H_*(x,t,Du^*) &= 0  \quad\hbox{ in  }\R^N\times
(0,T)\quad \text{for subsolutions }u\\
 \text{and}\quad (v_*)_t +H^*(x,t,Dv_*) &= 0  
 \quad\hbox{ in  }\R^N\times (0,T)\quad \text{for supersolutions $v$}\,, 
\end{align*}
where the ``upper-star'' denotes the upper semi-continuous envelope while the
``lower-star'' denotes the lower semi-continuous envelope.
Following this means that we have to complement Equations~\eqref{Bellman-Om} by
\begin{align} \min\{ u_t+ H_1(x,t,Du),  u_t+H_2(x,t,Du)\}\leq0  \quad\hbox{on   } \H \times (0,T)\;, \label{Bellman-H-sub} \\ 
  \max\{ u_t+ H_1(x,t,Du), u_t+H_2(x,t,Du)\}\geq 0   \quad\hbox{on   } \H \times (0,T) \; .
  \label{Bellman-H-sup}  
  \end{align}

In order to present our results and to compare them with those of \cite{BBC1}, we are going to describe the main contributions of \cite{BBC1} and the improvements/additional results of the present work. We first point out that the question we address in \cite{BBC1} (and also here) is to investigate the uniqueness properties for \eqref{eq:gen}
or equivalently \eqref{Bellman-Om}-\eqref{Bellman-H-sub}-\eqref{Bellman-H-sup}. The reason why we started to study the question in that way and why we insist on \eqref{Bellman-Om}-\eqref{Bellman-H-sub}-\eqref{Bellman-H-sup} is because of the stability properties of \eqref{Bellman-Om}-\eqref{Bellman-H-sub}-\eqref{Bellman-H-sup} : any approximation of the problem converges  to a solution of \eqref{Bellman-Om}-\eqref{Bellman-H-sub}-\eqref{Bellman-H-sup} and it is, in any case, important to understand the structure of the solutions of \eqref{Bellman-Om}-\eqref{Bellman-H-sub}-\eqref{Bellman-H-sup}.

The first result of \cite{BBC1} was to identify the maximal subsolution (and solution) and the minimal supersolution (and solution) of \eqref{Bellman-Om}-\eqref{Bellman-H-sub}-\eqref{Bellman-H-sup}. Both are value functions of suitable optimal control problems and the difference between them comes from the ``admissible'' strategies which can be used on the interface $\H$ ($\H$ was an hyperplane in \cite{BBC1}). A notion of ``regular'' and ``singular'' strategies is introduced and while, for the maximal solution $\VFp$, only the ``regular'' strategies are allowed, both ``regular'' and ``singular'' strategies can be used for the minimal solution $\VFm$. Roughly speaking, the whole set of ``regular'' and ``singular'' strategies are those which are obtained by an approach of the dynamic and cost via differential inclusions, i.e. by using on $\H$ any convex combination of the dynamics and costs in $\Omega_1$ and $\Omega_2$. ``Regular'' strategies are those for which $b_1$ and $b_2$ are pointing respectively outside  $\Omega_1$ and $\Omega_2$. The main difference between ``regular'' and ``singular'' strategies is that the ``regular'' ones are included in the formulation of\eqref{Bellman-Om}-\eqref{Bellman-H-sub}-\eqref{Bellman-H-sup}, while this is not the case for ``singular'' ones.

We refer the reader to Section~\ref{sec:def-ass} for the description of these different control problems and in particular of the two different value functions $\VFm$ and  $\VFp$, with the (classical) assumptions we are going to use. Of course, we give a precise definition of ``regular'' and ``singular'' strategies. To our point of view, there is no criterion to declare one of these value functions more natural than the other and therefore we pay the same attention to both.

In order to obtain this complete description, we have to do a double work : on one hand, we have to show that $\VFm$ and  $\VFp$ are solutions of \eqref{Bellman-Om}-\eqref{Bellman-H-sub}-\eqref{Bellman-H-sup} and, maybe, to obtain additional viscosity solutions inequalities on $\H$. This is indeed the case for $\VFm$ for which taking into account ``singular'' strategies is translated into an additional subsolution inequality on $\H$, but not for $\VFp$, which partially justify the above sentence claiming that ``regular'' strategies are included in the formulation of \eqref{Bellman-Om}-\eqref{Bellman-H-sub}-\eqref{Bellman-H-sup} (see also Theorem~\ref{teo:sotto}). Then we have to study the properties of general sub and supersolutions of \eqref{Bellman-Om}-\eqref{Bellman-H-sub}-\eqref{Bellman-H-sup} and more particularly on $\H$. Of course, and this is rather classical, we have connect these sub and supersolutions properties with sub or super-optimality principles. This is done in Section~\ref{pss}.

The difference here with \cite{BBC1} is that $\VFm$, $\VFp$ are not necessarily continuous since, at the same time, we have weakened the controlability assumption and we consider finite horizon control problems. The first consequence is that the connections with the Bellman Equation (\ref{Bellman-Om})-(\ref{Bellman-H-sub})-(\ref{Bellman-H-sup}) in Section~\ref{sect:edp} has to be stated in terms of discontinuous viscosity solutions (cf. Theorem~\ref{teo:HJ}). Then, still in Section~\ref{sect:edp}, we provide properties, satisfied either by $\VFp$ or by general sub and supersolutions which play a key role in order to obtain comparison results.

The next step consists in studying uniqueness-comparison properties. Of course, there is no general comparison result for (\ref{Bellman-Om})-(\ref{Bellman-H-sub})-(\ref{Bellman-H-sup}) since, in general, we have more than one solution ($\VFm$ and $\VFp$) but it turns out that, if we add a viscosity subsolution inequality on $\H$ (related, as we already mentioned it above, to singular strategies), then not only $\VFm$ becomes the only solution of this new problem but we have a full Strong Comparison Result for this new problem (i.e. a comparison result between discontinuous sub and supersolutions). This allows us to perform all the classical pde arguments in the $\VFm$ case. On the contrary, we were unable to find a pde characterization of $\VFp$ and all the proof requires optimal control arguments. This explains why we (unfortunately) have to double a lot of proofs since those for $\VFm$ and $\VFp$ have to use completely different arguments.

Compared to \cite{BBC1}, we have modified the strategy of the comparison
proofs by emphasizing the role of a ``local comparison result'' which is
given in the Appendix. There are several reasons to do so : such local
results are useful for applications, for example in homogenization
problems which we consider in a forthcoming work with N. Tchou
\cite{BBCT}; in such applications the use of the perturbed test-function
of L. C. Evans \cite{Evans1989,Evans1992} requires (or is far more
simpler with) such local comparison results. On the other hand we have
to handle, at the same time, a more complex geometry than in \cite{BBC1}
and a weaker controlability assumption (which implies that the sub
solutions are not automatically Lipschitz continuous) and to argue
locally allow to flatten the interface and use a double regularization
procedure on the subsolutions in the tangent variables, first by
sup-convolution to reduce to the Lipschitz continuous case and then by
usual mollification. Here it is worth pointing out the double role of
the ``controlability in the normal direction'' on $\H$: first,
technically, this allows to perform the sup-convolution procedure in the tangent variables only by, roughly speaking, inducing a control of the normal derivatives of the solution by the tangent derivatives. Then the same argument implies that a subsolution which is Lipschitz continuous in the tangent variable is Lipschitz continuous with respect to all variables and this is precisely the case for the subsolution obtained by sup-convolution.

Finally, in \cite{BBC1}, we did not really address the question of the
stability properties, despite we provide few partial results.  In
Section~\ref{sect:stability}, we study them more systematically. As we
already mentioned above, the results and the proofs for $\VFm$ and
$\VFp$ are completely different. For the problem satisfied by $\VFm$, it
is (almost) a ``classical'' stability result proved by (almost)
``classical'' arguments, but contrarily to the standard results in
viscosity solutions' theory, we face a difficulty because of the
discontinuity on $\H$, difficulty which is solved in an unusual way by
the controlability assumption in the normal direction. On the contrary,
for the problem satisfied by $\VFp$, we prove the stability of
controlled trajectories and costs, a rather delicate result since we
have to show that the limit of trajectories with ``regular'' strategies
is a trajectory wich can be represented by a ``regular'' strategy. In this second case, we have no pde approach and therefore this is the only kind of results we may hope to have.

Finally Section~\ref{sec:FRE} is devoted to describe several extensions, in particular to multi-domains problems in which the domains may also depend on time.

There are more and more articles on Hamilton-Jacobi-Bellman Equations or
control problems on multi-domains (also called stratified domains). We
start by recalling the pioneering work by Dupuis \cite{Du} who uses
similar methods to construct a numerical method for a calculus of
variation problem with discontinuous integrand. Problems with a
discontinuous running cost were addressed by either Garavello and
Soravia \cite{GS1,GS2}, or Camilli and Siconolfi \cite{CaSo} (even in an
$L^\infty$-framework) and Soravia \cite{So}. To the best of our
knowledge, all the uniqueness results use a special structure of the
discontinuities as in \cite{DeZS,DE,GGR} or an hyperbolic approach as in
\cite{AMV,CR}. Recent works on optimal control problem on stratified
domains are the ones of Bressan and Hong \cite{BrYu} but also Barnard
and Wolenski \cite{BaWo} and Rao and Zidani \cite{RaZi} (who mention a
forthcoming work with Siconolfi \cite{RaSiZi}): in these three last
works,  where the approach is different since they do not
  start from
  (\ref{Bellman-Om})-(\ref{Bellman-H-sub})-(\ref{Bellman-H-sup}) and
  instead write Bellman Equations which are adapted to the dynamic of
  the problem and the geometry of the discontinuities, uniqueness
results are provided by a different method than ours, which
completely relies on control arguments. The advantage of their methods is to allow
them to handle more general stratified domains (non-smooth domains with
multiple junctions) but with more restrictive controlability assumptions
and without the stability results we can provide. We finally remark that
problems on network (see \cite{IMZ}, \cite{ACCT}, \cite{ScCa}) share the
same kind of difficulties: indeed one has to take into account the
junctions as we have to deal with the interface~$\H$.

\smallskip

\textbf{Acknowledgements ---} We would like to thank Nicoletta Tchou for several constructive remarks on the preliminary versions of this paper.

\smallskip

\section{The optimal control problem}\label{sec:def-ass}

\noindent\textsc{The control problem ---} We fix $T>0$ and consider
that, on each domain $\Omega_i$ ($i=1,2$) we have a controlled dynamic given by
$b_i : \overline{\Omega_i}\times[0,T]\times A_i \to \R^N$, where $A_i$
is the compact metric space where the control takes its values. We have
also a running cost $l_i : \overline{\Omega_i}\times[0,T]\times A_i \to
\R$. Throughout the paper, we make the following assumption on the initial
cost:
\begin{itemize}
    \item[\hyp{g}{}] \textit{The function $g$ is bounded and continuous in
	$\R^N$\,.}
\end{itemize}	
Our main assumptions for the control problem are the following. 
\begin{itemize} 
\item[\hyp{\rm C}{1}] \textit{For any $i=1,2$, $A_i$ is a compact metric
	space and $b_i : \overline{\Omega_i}\times[0,T]\times A_i \to
	\R^N$ is a continuous bounded function. More precisely  there
	exists  $M_b >0$, such that for any $x \in \R^N$, $s \in [0,T]$
	and $\alpha_i \in A_i$, $i=1,2$, $$ |b_i(x,s,\alpha_i) | \leq
	M_b \; . $$ Moreover there exists $L_b \in \R$ such that, for
	any $z, z'\in  \overline{\Omega_i}$,
	$s, s'\in[0,T]$ and $\alpha_i \in A_i$, $i=1,2,$} 
    $$|b_i(z,s,\alpha_i)-b_i(z',s',\alpha_i)|\leq
    L_b (|z-z'|+|s-s'|)\; .$$

\item[\hyp{\rm C}{2}]  \textit{For any $i=1,2$, the function $l_i :
	\overline{\Omega_i}\times[0,T]\times A_i  \to \R^N$ is a uniformly
	continuous, bounded function.   More precisely  there exists  
$M_l >0$, such that for any $x \in \R^N$, $s \in [0,T]$ and $\alpha_i
\in A_i$, $i=1,2$,
$$ |l_i(x,s,\alpha_i) | \leq M_l \,. $$ 
Moreover there exists a modulus of continuity $m_l : [0,+\infty) \to [0,+\infty)$ such that, for any $z, z' \in
	\overline{\Omega_i}$, $s,s'\in[0,T]$ and $\alpha_i \in A_i$, $i=1,2,$}
    $$|l_i(z,s,\alpha_i)-l_i(z',s',\alpha_i)|\leq m_l (|z-z'|+|s-s'|)\;  .$$ 

%

\item[\hyp{\rm C}{3}] \textit{For each $i=1,2$, $z \in \overline{\Omega_i}$, and
	$s\in[0,T]$, the set $\big\{
	    \big(b_i(z,s,\alpha_i),l_i(z,s,\alpha_i)\big) : \alpha_i \in
	    A_i \big\}$ is closed and convex. }

\item[\hyp{\rm C}{4}] \textit{There is a $\delta>0$ such that for any
	$i=1,2$, $z\in\H$ and $s\in[0,T]$}
    \begin{equation}\label{cont-ass}
        \mathbf{B}_i(z,s)\cdot\mathbf{n}_i(z)\supset [-\delta,\delta]
     \end{equation} 
     where $\mathbf{B}_i(z,s):= \big\{ b_i(z,s,\alpha_i) : \alpha_i \in
	 A_i \big\}$\,.  
\end{itemize}

Assumption \hyp{\rm C}{1} and \hyp{\rm C}{2} are the classical
hypotheses used in control problems, while \hyp{\rm C}{3} avoids the use of relaxed controls.  Hypothesis \hyp{\rm C}{4}
expresses some controllability condition but only in the normal
direction when the point $x$ belongs to the boundaries shared by the
sets $\Omega_i$. In the sequel, we refer to \hyp{\rm C}{} as the
intersection of all the four hypotheses \hyp{\rm C}{1}--\hyp{\rm C}{4}.

\

\noindent\textsc{Boundary dynamics ---} In order to define the
controlled dynamics and trajectories which may stay for a while on the
common boundary $\H$, we introduce the boundary dynamic as
follows: if $s\in[0,T]$, $z\in\H$ we set
\begin{equation*}
    b_\H\big(z,s,a)=b_\H\big(z,s,(\alpha_1, \alpha_2, \mu)\big):=\mu
    b_1 (z,s,\alpha_1) + (1-\mu)b_2(z,s,\alpha_2)\,,
\end{equation*}
where $\mu \in [0,1]$, $\alpha_1 \in A_1$, $\alpha_2 \in A_2$.  For any
$z\in\H$ and $s\in[0,T]$ we denote by
\begin{equation*}
    A_0(z,s):=\Big\{\a=(\alpha_1, \alpha_2, \mu):
	b_\H\big(z,s,(\alpha_1, \alpha_2, \mu)\big)\cdot
	\mathbf{n}_1(z) = 0\Big\}\,,
\end{equation*}
and the associated cost on $\H$ is
\begin{equation*}
    l_\H(z,s,\a)=l_\H\big(z,s,(\alpha_1, \alpha_2, \mu)\big):=\mu
    l_1 (z,s,\alpha_1) + (1-\mu)l_2(z,s,\alpha_2)\,.
\end{equation*}
Notice that the dynamic and cost on $\H$ are not symmetric if one
swaps the indices $1$ and $2$ (although this could be overcome by changing also
$\mu$). 
\

\noindent\textsc{Trajectories ---} We are going to define the
trajectories of our optimal control problem by using the approach
via differential inclusions which is rather convenient here. This
approach has been introduced in \cite{Wa} (see also \cite{AF}) and has
now become classical.

Our trajectories
$\trajxt(\cdot)=\big((\trajxt)_1,(\trajxt)_2,\dots,(\trajxt)_N\big)(\cdot)$
are Lipschitz continuous functions which are solutions  of the following
differential inclusion
\begin{equation} \label{def:traj} 
  \dottrajxt (s) \in \mB(\trajxt (s),t-s) \quad \hbox{for a.e.  } s \in
  [0,t)  \: ; \quad \trajxt(0)=x
\end{equation} 
where 
\begin{equation}
  \mB(z,s):= \begin{cases} \mathbf{B}_i(z,s)   &   \text{ if }
      z\in\Omega_i\,,    \\ \cob \big( \mathbf{B}_1(z,s) \cup
      \mathbf{B}_2(z,s) \big)  &  \text{ if } z\in\H\,, 
   \end{cases} 
\end{equation}
the notation $\cob(E)$ referring to the convex closure of the set
$E\subset\R^N$.  We point out that if the definition of $\mB(z,s)$ is
natural when $z\in\Omega_i$, it is dictated by the assumptions to obtain
the existence of a solution to (\ref{def:traj}) for $z\in\H$ (see
below).

As we see, our controls $a(\cdot)$ can take two forms: either $a(s)$
belongs to one of the control sets $A_i$; or it can be expressed as a
triple $(\alpha_1(s),\alpha_2(s),\mu(s))\in A_1\times A_2\times[0,1]$.  Hence, in
order to define globally a control, we introduce the compact set 
\begin{equation*}
    A:= A_1\times A_2\times [0,1]
\end{equation*}
and define a control as
being a function of $L^\infty (0,t; A)$ which can be seen as a subset of $\Ac := L^\infty (0,T; A)$.  Let us define 
\begin{equation*}
    \Ei:= \big\{ s\in(0,t):\trajxt (s) \in \Omega_i  \big\} \,,\quad
    \E_\H:= \big\{s\in(0,t):\trajxt (s) \in \H   \big\} \,,
\end{equation*}
where actually these sets depend on $(x,t)$ but we shall  omit this dependence
for the sake of simplicity of notations. We then have the following 

\begin{theorem}\label{def:dyn}
 Assume \hyp{\Omega}{}, \hyp{\rm C}{1}, \hyp{\rm C}{2} and 
 \hyp{\rm C}{3}. Then
 \begin{itemize}
     \item[{\rm (i)}] For each $x\in \R^N$, $t\in[0,T)$ there exists a
	 Lipschitz function $\trajxt : [0,t]\to \R^N$ which is a
	 solution of the differential inclusion  \eqref{def:traj}.
     \item[{\rm (ii)}] For each solution  $\trajxt(\cdot)$ of
	 \eqref{def:traj},  there exists a control $\a(\cdot)\in \Ac$
	 such that for a.e. $s\in(0,t)$
         \begin{equation}
	     \begin{aligned}\label{fond:traj} 
		 \dottrajxt (s) &= \sum_{i=1,2}b_i\big(\trajxt
		 (s),t-s,\alpha_i(s)\big)\mathds{1}_{\Ei}(s)+
		 b_\H\big(\trajxt(s),t-s,a(s)\big)\mathds{1}_{\E_\H}(s)
             \end{aligned}
        \end{equation} 
	where $a(s)=\big(\alpha_1(s),\alpha_2(s),\mu(s)\big)$ if
	$\trajxt(s)\in\H$.  
    \item[{\rm (iii)}] If
	$\mathbf{e}(\cdot)=\mathbf{n}_1(\cdot)$ or $\mathbf{n}_2(\cdot)$
	we have 
	\begin{equation*}
		b_\H\big(\trajxt(s),t-s,a(s)\big)\cdot
		\mathbf{e}\big(\trajxt(s)\big) = 0 \quad \hbox{for a.e.
		}s\in\E_\H\;.  
	\end{equation*}
	In other words, $a(s)\in A_0(\trajxt(s),t-s)$ for a.e.
	$s\in\E_\H$.
  \end{itemize} 
\end{theorem}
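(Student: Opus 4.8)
The plan is to treat the three assertions in turn, relying on the standard theory of differential inclusions for $(i)$ and $(ii)$ and on a real-analysis argument for $(iii)$.

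For $(i)$, I would invoke the classical existence theorem for differential inclusions (as in \cite{AF}): it suffices to check that the set-valued map $(z,s)\mapsto\mB(z,s)$ takes nonempty, compact, convex values, is bounded, and is upper semicontinuous. Boundedness by $M_b$ is immediate from \hyp{\rm C}{1}. Each $\mathbf{B}_i(z,s)$ is the image of the compact set $A_i$ under the continuous map $b_i$, hence compact; its convexity follows from \hyp{\rm C}{3} by projecting the convex set $\{(b_i,l_i)\}$ onto the first $N$ coordinates. The convex closure of a union of two compact convex sets is again compact and convex, so $\mB$ has the required values on $\H$ as well. The only delicate point is upper semicontinuity at points of $\H$: as $z\to z_0\in\H$ from within $\Omega_i$, continuity of $b_i$ gives $\mathbf{B}_i(z,s)\to\mathbf{B}_i(z_0,s)\subset\cob\big(\mathbf{B}_1(z_0,s)\cup\mathbf{B}_2(z_0,s)\big)=\mB(z_0,s)$, and this is precisely why the convex-closure enlargement is imposed on $\H$. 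Existence of a Lipschitz $\trajxt$ then follows.

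For $(ii)$, the goal is to parametrize the velocity by a measurable control. On each of the measurable sets $\Ei$ one has $\dottrajxt(s)\in\mathbf{B}_i(\trajxt(s),t-s)$, and Filippov's measurable selection lemma produces a measurable $\alpha_i(\cdot)\in A_i$ with $\dottrajxt(s)=b_i(\trajxt(s),t-s,\alpha_i(s))$. On $\E_\H$ the key observation is that, since $\mathbf{B}_1$ and $\mathbf{B}_2$ are convex, the convex closure of their union is exactly the set of binary combinations $\{\mu p_1+(1-\mu)p_2 : p_i\in\mathbf{B}_i,\ \mu\in[0,1]\}$; hence $\dottrajxt(s)$ is of the form $b_\H(\trajxt(s),t-s,(\alpha_1,\alpha_2,\mu))$. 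A second application of measurable selection, now to the set-valued map $s\mapsto\{a\in A : b_\H(\trajxt(s),t-s,a)=\dottrajxt(s)\}$, yields a measurable triple $a(\cdot)=(\alpha_1(\cdot),\alpha_2(\cdot),\mu(\cdot))$, and patching the selections over $\E_1,\E_2,\E_\H$ produces the control $a(\cdot)\in\Ac$ realizing \eqref{fond:traj}.

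For $(iii)$, I would exploit that $\dH(\trajxt(s))=0$ for $s\in\E_\H$. The map $s\mapsto\dH(\trajxt(s))$ is Lipschitz, being the composition of the Lipschitz function $\dH$ (from \hyp{\Omega}{}) with the Lipschitz trajectory. Since a Lipschitz function has vanishing derivative at almost every point of any of its level sets, $\tfrac{d}{ds}\dH(\trajxt(s))=0$ for a.e.\ $s\in\E_\H$; by the chain rule this reads $D\dH(\trajxt(s))\cdot\dottrajxt(s)=0$. Recalling that $D\dH=\mathbf{n}_2=-\mathbf{n}_1$, this is exactly $b_\H(\trajxt(s),t-s,a(s))\cdot\mathbf{e}(\trajxt(s))=0$, i.e.\ $a(s)\in A_0(\trajxt(s),t-s)$. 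I expect the main obstacle to be the simultaneous measurable selection of the triple on $\E_\H$: one must verify that the relevant set-valued map has closed graph and measurable values so that Filippov's lemma applies, and in particular that $\dottrajxt(s)$ genuinely lies in the binary-combination set rather than in a larger Carathéodory hull, a point that rests essentially on the individual convexity of $\mathbf{B}_1$ and $\mathbf{B}_2$.
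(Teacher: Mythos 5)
Your proposal is correct and follows essentially the same route as the paper, which simply defers to the proof of the analogous theorem in \cite{BBC1}: existence via the standard theory of upper semicontinuous, compact-convex-valued differential inclusions, the representation \eqref{fond:traj} via Filippov's measurable selection lemma together with the identification of $\cob(\mathbf{B}_1\cup\mathbf{B}_2)$ with binary convex combinations, and part (iii) via the a.e.\ vanishing of the derivative of the Lipschitz map $s\mapsto\dH(\trajxt(s))$ on its zero level set. The only change the paper notes relative to \cite{BBC1} is the added time dependence of $b$, which your argument already accommodates.
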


\begin{proof} The proof is done exactly as in \cite{BBC1}, the only minor
  modification consisting in adding the time variable in the vector
  field $b$.
\end{proof}

\noindent\textsc{Regular and Singular dynamics ---} It is worth
remarking that, in Theorem~\ref{def:dyn}, a solution $\trajxt(\cdot) $
can be associated to several controls $a(\cdot)$. So, to properly set
the control problem we introduce the set $\Tc_{x,t}$ of admissible
controlled trajectories starting from $x$, 
\begin{equation*} 
  \Tc_{x,t}:= \big\{ \big(\trajxt(\cdot),\a(\cdot)\big)\in {\rm
	  Lip}(0,t;\R^N) \times \A \mbox{ such that \eqref{fond:traj} is
	  fulfilled and }  \trajxt(0)=x \big\} \,.
\end{equation*} 
  Given $(z,s)\in\H\times[0,t]$, we call \textit{singular} a dynamic
$b_\H(z,s,\a)$ with $a=(\alpha_1, \alpha_2, \mu)\in A_0(z,s)$ when 
\begin{equation*}
    b_1(z,s,\alpha_1)\cdot \mathbf{n}_1(z) < 0\,, \quad
    b_2(z,s,\alpha_2)\cdot \mathbf{n}_2(z)< 0\,.  
\end{equation*}
Conversely, the \textit{regular} dynamics are
those for which the $b_i (z,s,\alpha_i)\cdot \mathbf{n}_i (z)\geq0$
($i=1,2$). 
The set of regular controls is denoted by
\begin{equation*}
    \Aoreg(z,s):=\big\{\a=(\alpha_1,\alpha_2,\mu)  \in A_0(z,s)\: ; \:
	b_i(z,s,\alpha_i) \cdot \mathbf{n}_i(z)  \geq 0,\ i=1,2  \big\}\,,
\end{equation*}
and the regular trajectories are defined as 
\begin{equation*}
     \Treg:= \Big\{ \big(\trajxt(\cdot),\a (\cdot)\big) \in \Ta :
	  \text{ for a.e. } s\in\E_\H,  \: a(s)\in\Aoreg\big(X(s),t-s\big)
	  \Big\}\,.  
\end{equation*}
 Trajectories satisfying for example 
  $b_1(z,s,\alpha_1)\cdot\mathbf{n}_1(z)<0 <
  b_2(z,s,\alpha_2)\cdot\mathbf{n}_2(z)$ are neither called singular nor
  regular since they do not remain on $\H$, they are handled by
  classical arguments.

\noindent \textsc{ The cost functional --} Our aim is  to minimize a
finite horizon cost functional such that we respectively pay $l_{i}$ if
the trajectory is in $\Omega_i$, and $l_\H$ if it is on $\H$. The
final cost is given by $g$.\\

More precisely,  the cost associated to $(\trajxt(\cdot) ,\a)  \in \Ta$
is 
\begin{equation}\label{def:cost}
  J(x,t;(\trajxt,a)):= \int_0^t \ell\big(\trajxt(s),t-s,a(s) \big) \ds + 
  g\big(\trajxt(t)\big)
\end{equation}
where the Lagrangian is given by 
\begin{equation}\label{def:lagrangian}
    \ell\big(\trajxt (s),t-s,a(s)\big)  := \sum_{i=1,2}
	l_i\big(\trajxt(s),t-s,\alpha_i(s)\big) \mathds{1}_{\Ei}(s) +
        l_\H\big(\trajxt(s),t-s,\a(s)\big)\mathds{1}_{\E_\H}(s)\,. 
\end{equation}

\noindent\textsc{The value functions -- }For each $x\in\R^N$ and
$t\in[0,T)$, we define the following two value functions
\begin{align}
    \VFm (x,t)&:= \inf_{(\trajxt,\a) \in \Ta} J\big(x,t; (\trajxt,
    \a)\big) \label{eq:valorem}\\
 \VFp (x,t)&:=
    \inf_{(\trajxt,\a) \in \Treg} J\big(x,t; (\trajxt, \a)\big).\label{eq:valorep}
\end{align}
 
A first key result is the {\bf Dynamic Programming Principle} 
(the proof being standard once we have the definition of trajectories, we skip it).

\begin{theorem} 
  Assume \hyp{\Omega}{}, \hyp{\rm C}{1}, \hyp{\rm C}{2} and \hyp{\rm
      C}{3}. Let $\VFm,\VFp$ be the  value functions defined in
  \eqref{eq:valorem} and \eqref{eq:valorep}.  Then for each
  $(x,t)\in\R^N\times[0,T)$, and each $\tau\in(0,t)$, we have 
  \begin{align} 
    \VFm (x,t)&= \inf_{(\trajxt,\a) \in \Ta} \left\{ \int_0^\tau
	\ell\big(\trajxt (s),t-s,a(s) \big)   \ds +   \VFm (\trajxt(\tau), t-\tau)
    \right\} \label{DPPm} \\
    \VFp (x,t)&= \inf_{  (\trajxt,\a) \in \Treg}\left\{ \int_0^\tau
	\ell\big(\trajxt (s),t-s,a(s) \big)  \ds + \VFp (\trajxt(\tau), t-\tau)
    \right\}.\label{DPPp} 
  \end{align} 
\end{theorem}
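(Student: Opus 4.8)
The statement is the standard Dynamic Programming Principle, and the plan is to prove the two inequalities for each value function by combining two structural facts about our setting: the \emph{shift-invariance} of the differential inclusion \eqref{def:traj} with respect to the backward time variable $t-s$, and the \emph{additivity} of the cost \eqref{def:cost} along a trajectory. Throughout I fix $(x,t)$ and $\tau\in(0,t)$, and I write $\mathrm{RHS}$ for the right-hand side of \eqref{DPPm} (resp.\ \eqref{DPPp}). The only non-routine point, which I postpone to the last paragraph, is checking that the constructions below respect the regular-strategy constraint defining $\Treg$.

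First I would establish the inequality $\VFm(x,t)\ge \mathrm{RHS}$ by a \emph{restriction} argument. Given any $(\trajxt,\a)\in\Ta$, define the tail trajectory $Y(\sigma):=\trajxt(\tau+\sigma)$ for $\sigma\in[0,t-\tau]$, with control $\sigma\mapsto \a(\tau+\sigma)$. Since $\dottrajxt(s)\in\mB(\trajxt(s),t-s)$ a.e., the substitution $s=\tau+\sigma$ gives $\dot Y(\sigma)\in\mB\big(Y(\sigma),(t-\tau)-\sigma\big)$ a.e., with $Y(0)=\trajxt(\tau)$, so $(Y,\a(\tau+\cdot))$ is an admissible pair in $\mathcal{T}_{\trajxt(\tau),\,t-\tau}$. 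The same change of variables in \eqref{def:cost} yields
\[
\int_\tau^t \ell\big(\trajxt(s),t-s,a(s)\big)\ds + g\big(\trajxt(t)\big)=J\big(\trajxt(\tau),t-\tau;(Y,\a(\tau+\cdot))\big)\ \ge\ \VFm\big(\trajxt(\tau),t-\tau\big).
\]
Hence $J(x,t;(\trajxt,\a))\ge \int_0^\tau \ell\,\ds+\VFm(\trajxt(\tau),t-\tau)$, and taking the infimum over $(\trajxt,\a)\in\Ta$ gives $\VFm(x,t)\ge\mathrm{RHS}$.

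For the reverse inequality $\VFm(x,t)\le\mathrm{RHS}$ I would use a \emph{concatenation} argument. Fix $(\trajxt,\a)\in\Ta$, set $y:=\trajxt(\tau)$, and for $\e>0$ pick an $\e$-optimal pair $(Z,\tilde\a)\in\mathcal{T}_{y,\,t-\tau}$ with $J(y,t-\tau;(Z,\tilde\a))\le \VFm(y,t-\tau)+\e$. Define $W(s):=\trajxt(s)$ on $[0,\tau]$ and $W(s):=Z(s-\tau)$ on $[\tau,t]$, with the correspondingly glued control. Continuity at $s=\tau$ holds because $Z(0)=y=\trajxt(\tau)$, the inclusion \eqref{def:traj} holds on each piece (again by the time-shift, now in the opposite direction), and the glued control lies in $\Ac$ and satisfies \eqref{fond:traj} a.e.; thus $(W,\cdot)\in\Ta$. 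By additivity of the cost, $J(x,t;(W,\cdot))=\int_0^\tau \ell\,\ds+J(y,t-\tau;(Z,\tilde\a))\le \int_0^\tau \ell\,\ds+\VFm(y,t-\tau)+\e$. Therefore $\VFm(x,t)\le \int_0^\tau \ell\,\ds+\VFm(\trajxt(\tau),t-\tau)+\e$; taking the infimum over $(\trajxt,\a)$ and letting $\e\to0$ gives $\VFm(x,t)\le\mathrm{RHS}$, which proves \eqref{DPPm}.

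Finally, for $\VFp$ and \eqref{DPPp} I would rerun the identical two-step argument with $\Ta$ replaced by $\Treg$ everywhere. The main (and genuinely non-classical) point to verify is that both operations preserve membership in $\Treg$, i.e.\ preserve the constraint $a(s)\in\Aoreg(X(s),t-s)$ for a.e.\ $s\in\E_\H$. This is where I expect the only real care to be needed, but it is in fact immediate precisely because regularity is a \emph{pointwise-in-time} condition that is stable under the backward-time shift: for the tail, $a(\tau+\sigma)\in\Aoreg(Y(\sigma),(t-\tau)-\sigma)$ for a.e.\ $\sigma$ with $Y(\sigma)\in\H$, so $(Y,\a(\tau+\cdot))\in\mathcal{T}^{\rm reg}_{\trajxt(\tau),\,t-\tau}$; and for the concatenation, the glued control satisfies the regular constraint on each piece, hence a.e.\ on $\{s:W(s)\in\H\}$, so $(W,\cdot)\in\Treg$. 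With these two observations the restriction inequality gives $\VFp(x,t)\ge\mathrm{RHS}$ and the concatenation inequality gives $\VFp(x,t)\le\mathrm{RHS}$, establishing \eqref{DPPp}. The remaining verifications (measurability of the glued control, validity of \eqref{fond:traj} across $s=\tau$, existence of an $\e$-optimal tail) are routine and rely only on Theorem~\ref{def:dyn}.
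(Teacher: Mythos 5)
Your proof is correct: the restriction/concatenation argument, together with the observation that the backward-time shift $s\mapsto \tau+\sigma$ preserves both the differential inclusion \eqref{def:traj} and the pointwise regularity constraint defining $\Treg$, is precisely the standard Dynamic Programming argument that the paper explicitly omits (``the proof being standard once we have the definition of trajectories, we skip it''). You have correctly identified the only non-classical point, namely the stability of the set of regular trajectories under restriction and gluing, and your treatment of it is sound.
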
 

We will prove that both value functions are continuous, but here it is
not so immediate since we only assume controlability in the normal
directions. We postpone this proof which uses some comparison for the
semi-continuous envelopes.

\section{The pde formulation of the problem} \label{sect:edp}

In order to describe what is happening on the hypersurface $\H$, we
shall introduce two "tangential Hamiltonians", namely $\HT,\HTreg$.
We introduce some notations to be clear on how they are defined.

We shall consider the tangent bundle $T\H:=\cup_{z\in\H}\big(\{z\}\times T_z\H\big)$
where $T_z\H$ is the tangent space to $\H$ at $z$ (which is
essentially $\R^{N-1}$). Thus, if $\phi\in C^1(\H)$, and
$x\in\H$, we denote by $D_\H \phi(x)$ the gradient of $\phi$ at
$x$, which belongs to $T_{x}\H$.

Also, the scalar product in $T_z\H$ will be denoted by
$\psg{u}{v}$ (we drop the reference to $T_z\H$ for simplicity, since no
confusion has to be feared in the sequel).
In this definition, both vectors $u,v$ should belong to
$T_z\H$ for this definition to make sense. Hence, to be precise we
should use the orthogonal projection $P_z:\R^N\to T_z\H$ when at
least one of the vectors $u,v$ lives in $\R^N$, but we shall omit this
point when writing $\psg{b_\H(x,t,a)}{D_\H\phi(x,t)}$\,.
Indeed, for any control $a$ in $A_0(x,t)$ or $\Aoregxt$,
$b_\H(x,t,a)$ can be identified with $P_x b_\H(x,t,a)$ since
$b_\H(x,t,a)$ has no component on the normal direction to $\H$,
by definition. To avoid confusions, the notation $u\cdot v$ will refer
only to the usual Euclidian scalar product in $\R^N$. 

The Hamiltonians $\HT,\HTreg$ will be written as $H_T/\HTreg(x,t,p)$ where
$((x,p),t)\in T\H\times[0,T]$. They are defined as follows:
\begin{align}
    \HT(x,t,p):=\sup_{A_0(x,t)} \big\{  - \psg{
    b_\H(x,t,\a)}{p}   -  l_\H(x,t,\a)   \big\}\,, \label{def:HamHT}  \\
    \HTreg(x,t,p):=\sup_{\Aoregxt} \big\{- \psg{
    b_\H(x,t,\a)}{p}-l_\H(x,t,\a)\big\}\,, \label{def:HamHTreg}
\end{align} where $A_0(x,t),\Aoreg(x,t)$ have been defined above.

The definition of viscosity sub and super-solutions for $\HT$ and
$\HTreg$ have to be understood on $\H$ as follows: 
\begin{definition}   \label{defi:sousolH}
    A bounded usc function $u:\H\times[0,T]\to\R$ is a viscosity
    subsolution of
$$
	u_t(x,t)+\HT(x,t,D_\H u)=0\quad \text{on}\quad
	\H\times[0,T]
$$
if, for any $\phi\in C^1(\H\times[0,T])$ and any maximum point
    $(x,t)$ of $(z,s)\mapsto u(z,s)-\phi(z,s)$ in $\H\times[0,T]$,
    one has
    \begin{equation*}
	\phi_t(x,t)+\HT\big(x,t,D_\H\phi(x,t)\big)\leq0\;.
    \end{equation*}
\end{definition}

Notice that of course, $(x,D_\H \phi(x,t))\in T\H$, so that this
is coherent with the definition of $\HT$.  A similar definition
holds for $\HTreg$, for supersolutions and solutions.  Of course, if $u$
is defined in a bigger set containing $\H\times[0,T]$ (typically $\R^N\times[0,T]$), we
have to use $u|_{\H\times[0,T]}$ (the restriction of $u$ to $\H\times[0,T]$)
in this definition, a notation that we will omit when not
necessary.

For the sake of clarity we introduce now a global formulation involving
a complementary Hamiltonian on the interface $\H$. To begin with, we recall that a subsolution (resp. a supersolution) of \eqref{eq:gen} when $H(x,t,p)=H_1(x,t,p)$ if $x \in  \Omega_1$ and $H(x,t,p)=H_2(x,t,p)$ if $x \in  \Omega_2$ is a bounded usc function $u$ (resp.a bounded lsc function $v$) which satisfies
\begin{align}
 & \begin{cases}
    u_t + H_1(x,t,Du) \leq  0 & \mbox{ in }  \Omega_1 \times (0,T)\,,\\
    u_t + H_2(x,t,Du) \leq  0 & \mbox{ in }  \Omega_2 \times (0,T)\,, \\ 
   u_t + \min\{H_1(x,t,Du), H_2(x,t,Du) \} \leq  0  & \mbox{ in }  \H \times (0,T) \,,\\
\end{cases}\label{eqn:ss-solH} \\[2mm]
\Bigg[\quad\text{resp.}\quad &
\begin{cases}
    v_t + H_1(x,t,Dv) \geq  0 & \mbox{ in }  \Omega_1 \times (0,T)\,,\\
    v_t + H_2(x,t,Dv) \geq  0 & \mbox{ in }  \Omega_2 \times (0,T)\,, \\ 
   v_t + \max\{H_1(x,t,Dv), H_2(x,t,Dv) \} \geq  0  & \mbox{ in }  \H \times (0,T)\\
\end{cases} \quad\Bigg]\;.\label{eqn:sur-solH}
\end{align}

Recall that since each $b_{i}$ is defined on 
$\overline{\Omega_{i}}\times(0,T)\times\R$, then 
$H_{i}$ is well-defined on $\H\times(0,T)$.
Next we have the following definition.
\begin{definition}
\label{solviscogen}
We say that a bounded usc function $u$ is a 
\underline{{\rm subsolution}} of  
\begin{align}
 u_t + \HHm(x,t, Du) & = 0  \hbox{ in }  \R^N  \times (0,T)\label{HHm-eqn}\\[2mm]
\big[\quad\text{resp.}\quad
 u_t + \HHp (x,t, Du) & = 0  \hbox{ in }  \R^N  \times (0,T)\label{HHp-eqn}
\qquad\big]
\end{align}
if it satisfies \eqref{eqn:ss-solH} and
\begin{align*}
	u_t(x,t)+\HT(x,t,D_\H u) &\leq 0\quad \text{on}\quad
	\H\times[0,T]\;,\\[2mm]
	\big[\quad\text{resp.}\quad 
	u_t(x,t)+\HTreg (x,t,D_\H u) &\leq 0\quad \text{on}\quad
	\H\times[0,T]\;, \quad\big]
\end{align*}	

in the sense of Definition~\ref{defi:sousolH}.\\
A lsc function $v$ is a  \underline{{\rm supersolution}} of \eqref{HHm-eqn} or \eqref{HHp-eqn} if it satisfies \eqref{eqn:sur-solH}.
\end{definition}

Notice that in this definition, a complementary condition is required only for 
the subsolution, nothing more is added for the supersolution.

\subsection{Properties of $\VFp$ and $\VFm$}\label{pvpvm}

We shall prove later on that both $\VFp$ and $\VFm$ are continuous,
but for the moment we
have to treat them a priori as discontinuous viscosity solutions of some
problem. We recall that, for any bounded function $v$, the lower and
upper semi-continuous envelopes are defined by
$$v_*(x,t):=\liminf_{(z,s)\to(x,t)}v(z,s)\,,\quad
v^*(x,t):=\limsup_{(z,s)\to(x,t)}v(z,s)\,.$$ 
Then, as we mention in the introduction the
definition of viscosity solution for discontinuous solutions is modified
by taking $(\VFm)_*$ instead of $\VFm$ for the supersolution condition,
and $(\VFm)^*$ instead of $(\VFm)$ for the subsolution condition.

We claim that the value functions  $\VFm$ and  $\VFp$ are viscosity
solutions of the Hamilton-Jacobi-Bellman problem
(\ref{Bellman-Om})-(\ref{Bellman-H-sub})-(\ref{Bellman-H-sup}), while
they fulfill different  inequalities on the hyperplane $\H$.

\begin{theorem}\label{teo:HJ}
    Assume \hyp{g}{}, \hyp{\Omega}{} and \hyp{\rm C}{}.  Then value functions
    $\VFm$ and $\VFp$ are both viscosity solutions of $u_t+H(x,u,Du)=0$.
    Moreover, $\VFm$ is a subsolution of $u_t+\HHm(x,t,Du)=0$ while
    $\VFp$ is a subsolution of $u_t+\HHp(x,t,Du)=0$.
\end{theorem}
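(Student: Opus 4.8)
The plan is to derive every inequality from the Dynamic Programming Principle \eqref{DPPm}--\eqref{DPPp}, treating the two value functions as discontinuous solutions: I establish the subsolution inequalities for the usc envelopes $(\VFm)^*,(\VFp)^*$ and the supersolution inequalities for the lsc envelopes $(\VFm)_*,(\VFp)_*$ (below I write $\VF$ for either $\VFm$ or $\VFp$). First come the interior statements. For $(x,t)$ with $x\in\Omega_i$, no controllability is needed: for short times every admissible trajectory issued from $x$ stays in $\Omega_i$, so locally we face the classical finite-horizon problem attached to $(b_i,l_i,A_i)$ with $\mB=\mathbf B_i$. The standard DPP argument (constant-in-time controls for the subsolution side, $\eps$-optimal trajectories for the supersolution side) shows that both value functions are viscosity solutions of $u_t+H_i(x,t,Du)=0$ in $\Omega_i\times(0,T)$, giving the first two lines of \eqref{eqn:ss-solH} and \eqref{eqn:sur-solH}.

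Next I treat the Ishii conditions on $\H$. For the supersolution ($\max$) inequality in \eqref{eqn:sur-solH}, I feed $\eps$-optimal trajectories into the DPP at a minimum point $(x,t)\in\H\times(0,T)$ of $\VF_*-\phi$; since on $\H$ every admissible velocity $\xi$ lies in $\mB=\cob(\mathbf B_1\cup\mathbf B_2)$, one has $\phi_t-\xi\cdot D\phi-\ell\le\phi_t+\max\{H_1,H_2\}$, and dividing by $\tau$ and letting $\tau\to0$ yields $\phi_t+\max\{H_1(x,t,D\phi),H_2(x,t,D\phi)\}\ge0$. For the subsolution ($\min$) inequality in \eqref{eqn:ss-solH}, I argue by contradiction at a maximum point of $\VF^*-\phi$: if $\phi_t+H_i(x,t,D\phi)\ge\theta>0$ for both $i$, then by convexity $\phi_t-\xi\cdot D\phi-\ell\ge\theta$ for every velocity $\xi\in\mB$ and the associated cost, so inserting any trajectory with $X(\tau)\to x$ into the inequality $\VF(x,t)\le\int_0^\tau\ell+\VF(X(\tau),t-\tau)$ gives a strictly negative right-hand side as $\tau\to0$, a contradiction. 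This part works identically for $\VFm$ and $\VFp$, since the competing trajectories may leave $\H$ immediately and are then automatically both admissible and regular.

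The genuinely new part is the tangential subsolution inequality, and this is where \hyp{\rm C}{4} is indispensable. Fix $\phi\in C^1(\H\times[0,T])$, a maximum point $(x,t)$ of $\VF^*-\phi$ on $\H\times[0,T]$, and a control $a\in A_0(x,t)$ (respectively $a\in\Aoregxt$ for $\VFp$). I must build an admissible trajectory in $\Ta$ (respectively $\Treg$) that remains on $\H$ near $x$ with velocity close to the tangential field $b_\H(x,t,a)$. A constant control fails, because the tangency constraint $b_\H\cdot\mathbf n_1=0$ is only pointwise; the remedy provided by \hyp{\rm C}{4} is to continuously adjust the control along the trajectory so as to cancel the normal component and keep $X(s)\in\H$, while its tangential component tends to $P_xb_\H(x,t,a)$ as $s\to0$. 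Inserting this trajectory into the DPP, using $\VF^*(X(\tau),t-\tau)\le\phi(X(\tau),t-\tau)-\phi(x,t)+\VF^*(x,t)$, dividing by $\tau$ and letting $\tau\to0$ produces
$$\phi_t(x,t)-\psg{b_\H(x,t,a)}{D_\H\phi(x,t)}-l_\H(x,t,a)\le0,$$
and taking the supremum over $a\in A_0(x,t)$ (respectively $\Aoregxt$) gives the $\HT$ (respectively $\HTreg$) subsolution inequality of Definition~\ref{defi:sousolH}.

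The main obstacle is precisely this trajectory construction on $\H$ and, for $\VFp$, the requirement that it be \emph{regular}: one must carry out the normal correction furnished by \hyp{\rm C}{4} while preserving $b_i\cdot\mathbf n_i\ge0$ for $i=1,2$, so as to remain in $\Treg$, and simultaneously realize the prescribed tangential velocity. A second technical difficulty is the passage to the semicontinuous envelopes: since $\VFm,\VFp$ are not yet known to be continuous, the maximum point of $\VF^*-\phi$ must be reached along a sequence $(x_n,t_n)$ with $\VF(x_n,t_n)\to\VF^*(x,t)$, the DPP applied there, and the limit taken so that the constructed trajectories and their costs converge. These two points are exactly where the weaker controllability assumption forces arguments different from those of \cite{BBC1}; all the remaining steps follow the classical DPP pattern.
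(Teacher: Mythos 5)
Your overall architecture (DPP applied to the semicontinuous envelopes, classical arguments in each $\Omega_i$, special treatment on $\H$) is the same as the paper's, and the interior and supersolution ($\max$) parts are correct. However, there is a genuine gap in your argument for the Ishii subsolution inequality $\phi_t+\min\{H_1,H_2\}\le0$ on $\H$. From $\phi_t+H_i(x,t,D\phi)\ge\theta$ for $i=1,2$ you infer ``by convexity $\phi_t-\xi\cdot D\phi-\ell\ge\theta$ for \emph{every} velocity $\xi\in\mB$''; this is false: $H_i$ is a supremum over $\alpha_i\in A_i$, so the hypothesis only yields, for each $i$, \emph{one} near-maximizing control $\alpha_i$ with $\phi_t-b_i\cdot D\phi-l_i\ge\theta$. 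Consequently ``inserting any trajectory'' gives no contradiction — you must exhibit an \emph{admissible} trajectory along which the integrand stays bounded below by $\theta$. When the near-maximizer satisfies $b_i(x,t,\alpha_i)\cdot\mathbf{n}_i(x)\le0$ the constant control works (the trajectory enters $\overline{\Omega_i}$ and stays there for a short time), but in the remaining case both maximizers point outwards ($b_i\cdot\mathbf{n}_i>0$ for $i=1,2$) and neither constant-control trajectory is admissible, contrary to your claim that the competitors ``leave $\H$ immediately and are then automatically admissible''. This is exactly where the paper introduces the weight $\mu^\sharp(y,s)=\frac{b_2(y,s,\alpha_2)\cdot\mathbf{n}_2(y)}{b_1(y,s,\alpha_1)\cdot\mathbf{n}_1(y)+b_2(y,s,\alpha_2)\cdot\mathbf{n}_2(y)}$ and solves the ODE driven by $\mu^\sharp b_1+(1-\mu^\sharp)b_2$, which remains on $\H$ by Gronwall; the DPP along this trajectory yields the convex-combination inequality and the contradiction. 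Without this construction the $\min$-inequality is not established.

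The same $\mu^\sharp$ device is what resolves the ``main obstacle'' you leave open for the tangential ($\HT$, $\HTreg$) inequality: for $a=(\alpha_1,\alpha_2,\mu)\in A_0(x,t)$ with both normal components of the same sign (regular controls, or singular ones in the $\VFm$ case), the continuous adjustment that keeps $X(s)$ on $\H$ is simply the replacement of the constant $\mu$ by $\mu^\sharp(X(s),t-s)$; this uses only the sign condition, not \hyp{\rm C}{4}, and it automatically preserves regularity in the $\VFp$ case. What \hyp{\rm C}{4} is actually needed for — and what your sketch omits — are two other steps of the paper's proof: (i) the approximating sequence $(x_n,t_n)$ with $\VFm(x_n,t_n)\to(\VFm)^*(x,t)$ must be pushed onto $\H$ (by following the flow of a control with $b_1\cdot\mathbf{n}_1=\delta$, whose exit time from $\Omega_1$ tends to $0$), since the tangential test function only sees values on $\H$; and (ii) combined with \hyp{\rm C}{3}, the extension of the inequality $\phi_t-b_i\cdot D\phi-l_i\le0$ from controls with $b_i\cdot\mathbf{n}_i<0$ to those with $b_i\cdot\mathbf{n}_i\le0$, which is needed to cover the degenerate tangential cases. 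Once these points are added, your outline coincides with the paper's proof.
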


\begin{proof} 
 The proof follows the arguments of \cite[Thm 2.5]{BBC1} with some adaptations due to the fact that $\VFm, \VFp$ can be discontinuous. We briefly show how to adapt the arguments.  In order to prove that $(\VFm)_*$ is a
supersolution we consider a point $(x,t)$ where $(\VFm)_*-\phi$ reaches
its minimum, $\phi$ being a smooth test function. If $x$ belongs to
some $\Omega_i$, the proof is classical since everything can be done in
$\Omega_i$ around the time $t$.
	
Thus we assume that $x\in\H$ and that the minimum is strict in
$B(x,r)\times(t-\sigma,t+\sigma)$ for some $r,\sigma>0$.
There exists a sequence $(x_n,t_n)\in B(x,r)\times(t-\sigma,t+\sigma)$ 
which converges to
$(x,t)$ such that $\VFm(x_n,t_n) \to (\VFm)_*(x,t)$ and by the dynamic programming principle,
\begin{equation*}
    \VFm(x_n,t_n)= \inf_{(\trajxntn,a)\in\Tan}\Big\{
	\int_{0}^{\tau}\ell\big(\trajxntn(s),t_n- s,a(s)\big)\ds +
	\VFm \big( \trajxntn(\tau), t_n-\tau\big)\Big\}\,, 
\end{equation*}
where $\tau<\sigma$.  Using that (i) $\VFm(x_n,t_n) = (\VFm)_*(x,t)+o_n(1)$ where $o_n(1) \to 0$, (ii) $\VFm \big( \trajxntn(\tau), t_n-\tau\big) \geq \VFm_* \big( \trajxntn(\tau), t_n-\tau\big)$ and the maximum point property, we obtain
\begin{equation*}
    \phi (x_n,t_n)+o_n(1) \geq \inf_{(\trajxntn,a)\in\Tan}\Big\{
	\int_{0}^{\tau}\ell\big(\trajxntn(s),t_n- s,a(s)\big)\ds +
	\phi\big( \trajxntn(\tau), t_n-\tau\big)\Big\}\,.
\end{equation*}
Now we use  the expansion of $\phi(\trajxntn(\tau),t_n-\tau)$, and
noting $X(\cdot)=\trajxntn(\cdot)$ for simplicity, we rewrite
the inequality as $o_n(1) \leq \sup_{(X,\a) } \int_{0}^{\tau}
\delta[\phi](s) \ds$ where
\begin{equation*}
    \begin{aligned} & \delta[\phi](s):=\\
	& \Big(-l_1(X(s),t_n-s,\alpha_1(s)) -
	    b_1(X(s),t_n-s,\alpha_1(s))\cdot D\phi(X(s),t_n-s)+
	    \phi_t(X(s),t_n-s)\Big)\mathds{1}_{\E_1}(s)\\ & +
	    \Big(-l_2(X(s),t_n-s,\alpha_2(s)) - b_2(X(s),t_n-s,\alpha_2(s))\cdot
	    D\phi(X(s),t_n-s) +\phi_t(X(s),t_n-s) \Big)\mathds{1}_{\E_2}(s)\\ & +
	    \Big(-l_\H(X(s),t_n-s,\a(s)) - b_\H(X(s),t_n-s,\a(s))\cdot
	    D\phi(X(s),t_n-s)+\phi_t(X(s),t_n-s)\Big) \mathds{1}_{\E_H}(s)
	    \\ & \leq 	
	    \bigg(\phi_t(X(s),t_n-s)+H_1\Big(X(s),t_n-s,D\phi(X(s),t_n-s)\Big)\bigg)
	    \mathds{1}_{\E_1}(s)\\ & + \bigg(\phi_t(X(s),t_n-s)+
	    H_2\Big(X(s),t_n-s,D\phi(X(s),t_n-s)\Big)\bigg)
	    \mathds{1}_{\E_2}(s)\\ &+ \bigg(\phi_t(X(s),t_n-s)+
	    \HT\Big(X(s),t_n-s,D\phi(X(s),t_n-s)\Big)\bigg)
	    \mathds{1}_{\E_H}(s)\,.
        \end{aligned} 
\end{equation*}

Using that $H_1,H_2,\HT\leq\max(H_1,H_2)$ (only on $\H$ for $\HT$), letting $n\to\infty$ and then
dividing by $\tau$ and sending $\tau$ to zero, we
obtain
\begin{equation*}
    \max\big(\phi_t+H_1,\phi_t+H_2\big)\big(x,t,D\phi(x,t)\big)\geq0\,,
\end{equation*}
which is the viscosity supersolution condition. The proof for $(\VFp)_*$
is exactly the same, with $\HT$ replaced by $\HTreg$, which satisfies
also $\HTreg\leq\max(H_1,H_2)$ on $\H$.
    
 For the subsolution condition, we have to consider maximum points of $(\VFm)^*-\phi$, $\phi$ being again a smooth function. If such maximum point are in $\Omega_1$ or $\Omega_2$, the proof is again classical. Hence we consider the case when $(\VFm)^*-\phi$ reaches a strict local maximum at $(x,t)$ with $x\in \H$, $t\in (0,T)$.

Then there exists a sequence $(x_n,t_n)\to(x,t)$ such that $\VFm(x_n,t_n)\to (\VFm)^* (x,t)$ and our first claim is that we can assume that $x_n \in \H$. Indeed, if $x_n \in \Omega_1$, we use assumption \hyp{C}{4} : there exists $\alpha_i$ such that $ b_1(x,t,\alpha_1) \cdot \mathbf{n}_1(x)=\delta$. Considering the trajectory with the constant control $\alpha_1$ 
$$ \dot Y(s) = b_1(Y(s),t_n-s,\alpha_1)\quad,\quad Y(0)=x_n ,$$
it is easy to show that $\tau^1_n$, the first exit time of the trajectory $Y$ from $\Omega_1$ tends to $0$ as $n\to +\infty$. By the Dynamic Programming Principle, denoting $(\tilde x_n,\tilde t_n)= (X(\tau^1_n),t-\tau^1_n)$, we have
$$ \VFm(x_n,t_n)\leq 
	\int_{0}^{\tau^1_n}\ell\big(Y(s),t_n- s,\alpha_1\big)\ds +
	\VFm (\tilde x_n,\tilde t_n)= \VFm (\tilde x_n,\tilde t_n) + o_n(1),$$
where $o_n(1)\to 0$. Therefore $\VFm (\tilde x_n,\tilde t_n) \to (\VFm)^* (x,t)$ and $\tilde x_n \in \H$.

Assuming that $x_n \in \H$, we can use again the Dynamic Programming Principle
$$ \VFm(x_n,t_n)\leq 
	\int_{0}^{\tau}\ell\big(\trajxntn (s),t_n- s,a(s)\big)\ds +
	\VFm ( \trajxntn(\tau), t_n-\tau\big),
	$$
with constant controls $a(s)=\alpha_i$ with $b_i (x,t,\alpha_i)\cdot \mathbf{n}_i (x) <0$. Arguing as above we get
$$ \phi_t (x,t) -b_i(x,t,\alpha_i)\cdot
D\phi(x,t)-l_i(x,t,\alpha_i)\leq0 \; .$$
Moreover, combining Assumptions \hyp{\rm C}{3} and \hyp{\rm C}{4}, one proves easily that this inequality holds
for any $\alpha_i$ with $b_i(x,t,\alpha_i)\cdot \mathbf{n}_i (x) \leq 0$.

Taking these informations into account, if we assume by contradiction that
\begin{equation*}
    \min\big\{\phi_t (x,t) +H_1\big(x,t,D\phi(x,t)\big)\,;
    \,\phi_t (x,t)+H_2\big(x,t,D\phi(x,t)\big)\big\} > 0\,,
\end{equation*}
this means that there exists $\alpha_1,\alpha_2$ with if $b_1(x,t,\alpha_1)\cdot\mathbf{n}_1(x)>0$ and
$b_2(x,t,\alpha_2)\cdot\mathbf{n}_2(x)>0$ such that, for $i=1,2$
$$ \phi_t (x,t) -b_i(x,t,\alpha_i)\cdot
D\phi(x,t)-l_i(x,t,\alpha_i) > 0 \; .$$

For $(y,s)$ close to $(x,t)$ and for such $\alpha_1,\alpha_2$, we set
\begin{equation*}
    \mu^\sharp(y,s):= \frac{b_2(y,s,\alpha_2) \cdot
	\mathbf{n}_2(y)}{b_1(y,s,\alpha_1)\cdot\mathbf{n}_1(y)+
	b_2(y,s,\alpha_2)\cdot \mathbf{n}_2(y)}\,.
\end{equation*}
Then we solve the ode 
\begin{equation*}
    \dot{x}(s)=\mu^\sharp(x(s),t-s)b_1(x(s),t-s,\alpha_1)+
    (1-\mu^\sharp(x(s),t-s))b_2(x(s),t-s,\alpha_2)\, , \quad \quad x(0)=x. 
\end{equation*}
By our hypotheses on $b_1$ and $b_2$, the right-hand side is Lipschitz
continuous so that the Cauchy-Lipschitz applies and gives a solution
$x(s)$.  Moreover, by our choice of $\mu^\sharp$, it is clear that
$0\leq\mu^\sharp\leq 1$ and that $\dot{x}(s)\cdot\mathbf{n}_1(x(s))=0$,
which implies by Gronwall's lemma that $s\mapsto x(s)$ remains on
$\H$, at least until some time $\tau>0$.  Using again the Dynamic Programming Principle and the usual arguments, we are lead to
$$\begin{aligned}
& \mu^\sharp(x,t)\biggl(\phi_t (x,t) -b_1(x,t,\alpha_1)\cdot
D\phi(x,t)-l_1(x,t,\alpha_1)\biggr)\\
& +(1-\mu^\sharp(x,t))\biggl(\phi_t (x,t) -b_2(x,t,\alpha_2)\cdot
D\phi(x,t)-l_2(x,t,\alpha_2)\biggr)\leq 0\;,\end{aligned}$$
a contradiction. 

Finally the $\HT$-inequality follows from the same arguments : in particular, if $b_1(x,t,\alpha_1)\cdot\mathbf{n}_1(x)<0$ and
$b_2(x,t,\alpha_1)\cdot\mathbf{n}_2(x)<0$, the above $\mu^\sharp$-argument can be applied readily.

The same proof works also for $(\VFp)^*$, except that some situation
cannot occur since we are only considering regular dynamics.
\end{proof}

Our next result is a (little bit unusual) supersolution property which
is satisfied by $ \VFp$ on $\H$, which is done exactly as in of
\cite[Thm 2.7]{BBC1} once we have the following extension result

\begin{lemma}\label{lem:extension}Let us assume that \hyp{\Omega}{} holds and let
  $\phi\in C^1\big(\H\times[0,T]\big)$.  Then there exists a function
  $\tilde \phi\in C^1\big(\R^N\times[0,T]\big)$ such that
  $\tilde\phi=\phi$ in $\H\times[0,T]$.  
\end{lemma}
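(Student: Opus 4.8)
The plan is to reduce the extension to a purely local problem, where $\H$ is a graph, to extend there by freezing the normal coordinate, and finally to glue the local pieces by a partition of unity. The only thing one really has to watch is regularity, and the key observation is that although we want a genuine $C^1$ extension, we only need $\H$ to be a $C^1$ submanifold, which is guaranteed by \hyp{\Omega}{} since a $W^{2,\infty}$-hypersurface is in particular $C^{1,1}\subset C^1$.

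First I would set up the local charts. Using \hyp{\Omega}{}, cover $\H$ by a locally finite family of open sets $U_\alpha\subset\R^N$ such that, after a rigid motion, $\H\cap U_\alpha$ is the graph $\{x_N=\gamma_\alpha(x')\}$ with $x'=(x_1,\dots,x_{N-1})$ and $\gamma_\alpha\in W^{2,\infty}\subset C^1$. Thus $\pi_\alpha:x'\mapsto(x',\gamma_\alpha(x'))$ is a $C^1$ parametrization of the piece $\H\cap U_\alpha$. In each chart I define $\psi_\alpha(x',t):=\phi\big(\pi_\alpha(x'),t\big)$, which is $C^1$ in $(x',t)$ because $\phi\in C^1(\H\times[0,T])$ and $\pi_\alpha$ is $C^1$, and I extend it to $U_\alpha\times[0,T]$ by freezing the normal coordinate, namely $\tilde\phi_\alpha(x,t):=\psi_\alpha(x',t)$. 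This $\tilde\phi_\alpha$ is $C^1$ on $U_\alpha\times[0,T]$ (its $x_N$-derivative is zero, hence continuous) and satisfies $\tilde\phi_\alpha=\phi$ on $(\H\cap U_\alpha)\times[0,T]$ by construction.

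Next I would glue. Let $\{\chi_\alpha\}$ be a $C^\infty$ partition of unity subordinate to $\{U_\alpha\}$ with $\sum_\alpha\chi_\alpha\equiv1$ on a neighbourhood $V$ of $\H$, and set $\Phi:=\sum_\alpha\chi_\alpha\,\tilde\phi_\alpha$ on $V\times[0,T]$. Since the sum is locally finite and each summand is $C^1$, $\Phi$ is $C^1$; moreover on $\H\times[0,T]$ one has $\Phi(z,t)=\sum_\alpha\chi_\alpha(z)\,\phi(z,t)=\phi(z,t)$ because $\sum_\alpha\chi_\alpha=1$ there. Finally I multiply by a cutoff $\zeta\in C^\infty(\R^N)$ with $\zeta\equiv1$ on a smaller neighbourhood of $\H$ and $\zeta$ supported in $V$, and extend by zero: $\tilde\phi:=\zeta\,\Phi$ on $V\times[0,T]$ and $\tilde\phi:=0$ elsewhere. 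Then $\tilde\phi\in C^1(\R^N\times[0,T])$ and $\tilde\phi=\phi$ on $\H\times[0,T]$, as required. The time variable is inert throughout, since $\phi$ is $C^1$ in $t$ while all the geometric maps ($\pi_\alpha$, $\chi_\alpha$, $\zeta$) are $t$-independent.

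The step I expect to be the only real subtlety is precisely the regularity bookkeeping that makes the graph/partition-of-unity route necessary. It is tempting to write the extension explicitly through the nearest-point projection, $\tilde\phi(x,t)=\phi\big(P_\H(x),t\big)$ with $P_\H(x)=x-\dH(x)\,D\dH(x)$, exploiting that $\dH$ is $W^{2,\infty}$ near $\H$ by \hyp{\Omega}{}. However, $D^2\dH$ is then only $L^\infty$, so $P_\H$ is Lipschitz but \emph{not} $C^1$ away from $\H$ (its differential involves $\dH\,D^2\dH$, which is bounded but generally discontinuous off $\H$); that construction only yields a Lipschitz extension. The argument above circumvents this by using that $\H$, even though it is not $C^2$, is a bona fide $C^1$-embedded submanifold, which is exactly the regularity matching the $C^1$ conclusion we are after.
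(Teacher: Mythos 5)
Your proof is correct, and in fact it supplies more than the paper does: the authors simply write that the proof is ``rather classical'' and omit it, so there is no argument in the text to compare against. Your chart--freeze--glue construction (local graph coordinates $x_N=\gamma_\alpha(x')$, extension constant in the normal graph coordinate, locally finite partition of unity, cutoff near $\H$) is exactly the standard argument they are alluding to, and each step checks out: $\psi_\alpha$ is $C^1$ because $\gamma_\alpha\in W^{2,\infty}\subset C^1$, the glued function restricts to $\phi$ on $\H\times[0,T]$ since $\sum_\alpha\chi_\alpha\equiv 1$ there, and the cutoff makes the extension by zero globally $C^1$. Your closing caveat is a genuinely useful observation that the paper does not make: since $\H$ is only $W^{2,\infty}$, the tempting explicit formula $\tilde\phi(x,t)=\phi(P_\H(x),t)$ via the nearest-point projection produces only a Lipschitz extension (the differential of $P_\H$ contains the term $\dH\,D^2\dH$, which is merely $L^\infty$ off $\H$), so the chart-based route is not just a stylistic choice but is actually needed to reach the stated $C^1$ conclusion under hypothesis \hyp{\Omega}{}.
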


\begin{proof} The proof is rather classical so that we omit it.
%
%
\end{proof}

We are going to consider control problems set in either $\Omega_i$ or
its closure.  For the sake of clarity we use the following notation.  If
$x \in \Omega_i$, and $\alpha_i (\cdot)\in L^\infty([0,T];A_i)$, we will
denote by $\trajyali(\cdot)$ the solution of the following ode
\begin{equation}\label{def:trajY}
    \dottrajyali(s) = b_i(\trajyali(s), t-s, \alpha_i(s))\quad,\quad
    \trajyali(0) = x\: .  
\end{equation}

 The following result is playing a key role in order to
  prove that the value function $\VFp$ is continuous and the maximal
  subsolution of
  \eqref{Bellman-Om}-\eqref{Bellman-H-sub}-\eqref{Bellman-H-sup}-\eqref{eqnid}
  (see Theorem~\ref{thm:Um.Up} below). One of the key difference between
  the $\VFm$ and $\VFp$ cases is that for the
  $\VFp/\HTreg$ case,
  we are able to prove such result only for the supersolution
  $(\VFp)_*$, while, in the other case ($\VFm/\HT$), it is true for
  any supersolution (see Theorem~\ref{teo:condplus} below).
  
\begin{theorem}\label{teo:condplus.VFp}
    Assume \hyp{g}{}, \hyp{\Omega}{} and \hyp{\rm C}{}.  Let $\phi\in
    C^1\big(\H\times[0,T]\big)$ and suppose that $(x,t)$ is a
    minimum point of $(z,s) \mapsto (\VFp)_* (z,s)-\phi (z,s)$ in
    $\H\times[0,T]$.  Then we have either  \\[2mm] {\bf A)} there
	exist $\eta >0$, $i\in\{1,2\}$ and a control $\alpha_i (\cdot)$
	such that, $\trajyali(s) \in \Omega_i $ for all $s \in ]0,\eta]$
	and
	\begin{equation}
	    (\VFp)_*(x,t) \geq \int_0^{\eta} l_i(\trajyali(s),t-s,
	    \alpha_i(s)) \ds + (\VFp)_* (\trajyali(\eta),t-\eta)
	\end{equation} or \\ {\bf B)} it holds
       	\begin{equation}\label{condhyperSUP.VFp}
	    \partial_t \phi(x,t)+\HTreg \big(x,t, D_\H\phi(x,t)\big) \geq
	    0.
	\end{equation}
\end{theorem}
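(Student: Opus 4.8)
The plan is to mirror the proof of \cite[Thm~2.7]{BBC1}, reading the alternative directly off the Dynamic Programming Principle \eqref{DPPp} for $\VFp$ through the behaviour of almost optimal \emph{regular} trajectories. First, the reductions. Using Lemma~\ref{lem:extension} I extend $\phi$ to $\tilde\phi\in C^1(\R^N\times[0,T])$ with $\tilde\phi=\phi$ on $\H\times[0,T]$; since $b_\H(z,s,\a)$ is tangent to $\H$ for $\a\in\Aoreg(z,s)$, one has $b_\H\cdot D\tilde\phi=\psg{b_\H}{D_\H\phi}$ and $\partial_t\tilde\phi=\partial_t\phi$ on $\H$, so the extension is irrelevant. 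I may assume the minimum of $(\VFp)_*-\phi$ is \emph{strict} (subtract $|z-x|^2+|s-t|^2$), and I pick $(x_n,t_n)\to(x,t)$ with $\VFp(x_n,t_n)\to(\VFp)_*(x,t)$. If these points may be taken on $\H$ (which follows from \hyp{\rm C}{4} by pushing $x_n$ onto $\H$ with a constant control in time $O(\dist(x_n,\H))\to0$, exactly as in the reduction in the proof of Theorem~\ref{teo:HJ}) I proceed as below; if instead the sequence is forced to stay in some fixed $\Omega_i$, that excursion into $\Omega_i$ is precisely what produces alternative \textbf{A)}. Finally I fix a small $\tau>0$ and, by \eqref{DPPp}, a near-optimal \emph{regular} trajectory $(X_n,\a_n)$ issued from $(x_n,t_n)$ with
\[
\int_0^\tau\ell(X_n(s),t_n-s,\a_n(s))\ds+\VFp(X_n(\tau),t_n-\tau)\le\VFp(x_n,t_n)+\tfrac{\tau}{n}.
\]

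The dichotomy is read on whether these near-optimal trajectories stay on $\H$. Suppose first that, along a subsequence, $X_n(s)\in\H$ for a.e. $s\in[0,\tau]$. Then $\a_n(s)\in\Aoreg(X_n(s),t_n-s)$ for a.e. $s$, because $(X_n,\a_n)$ is regular; this is exactly the place where the restriction to regular trajectories in the definition \eqref{eq:valorep} of $\VFp$ is used, and it is what singles out $\HTreg$ rather than $\HT$. Writing $c:=(\VFp)_*(x,t)-\phi(x,t)$, the strict-minimum property gives $\VFp(X_n(\tau),t_n-\tau)\ge\phi(X_n(\tau),t_n-\tau)+c$ and $\VFp(x_n,t_n)=\phi(x_n,t_n)+c+o(1)$; expanding $s\mapsto\tilde\phi(X_n(s),t_n-s)$ along \eqref{fond:traj} then yields
\[
\int_0^\tau\big(l_\H+b_\H\cdot D\tilde\phi-\partial_t\phi\big)(X_n(s),t_n-s,\a_n(s))\ds\le o(1)+\tfrac{\tau}{n}.
\]
Since $\a_n(s)\in\Aoreg$ and $b_\H\cdot D\tilde\phi=\psg{b_\H}{D_\H\phi}$, definition \eqref{def:HamHTreg} gives $l_\H+b_\H\cdot D\tilde\phi\ge-\HTreg$, so the integrand is $\ge-(\partial_t\phi+\HTreg)(X_n(s),t_n-s,D_\H\phi)$. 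Dividing by $\tau$, letting $n\to\infty$ (using $X_n\to x$ uniformly and the lower semicontinuity of $\HTreg$, i.e. the upper semicontinuity of the multifunction $\Aoreg$) and then $\tau\to0$ produces \eqref{condhyperSUP.VFp}, that is alternative \textbf{B)}.

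It remains to treat the case where the near-optimal trajectories must leave $\H$; this is where \textbf{A)} is produced. Assuming $X_n$ enters, say, $\Omega_1$, I localise a sub-interval on which $X_n\in\Omega_1$, apply \eqref{DPPp} there (where $X_n$ is an ordinary solution \eqref{def:trajY} of $\dot Y=b_1$, with no interface constraint), and pass to the limit. By \hyp{\rm C}{1} the trajectories are uniformly Lipschitz, so Ascoli provides a limit curve $\trajyali$ issued from $x$; \hyp{\rm C}{3} (convexity) together with the closedness of the velocity sets ensures the limit is again of the form \eqref{def:trajY} for an admissible $\alpha_i(\cdot)$, and lower semicontinuity of $(\VFp)_*$ transfers the super-optimality inequality to the envelope, giving \textbf{A)} for some $\eta>0$ with $\trajyali(s)\in\Omega_i$ on $(0,\eta]$.

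The main obstacle is exactly this last extraction, and it is the reason the statement is restricted to the supersolution $(\VFp)_*$ rather than to an arbitrary supersolution as in the $\VFm/\HT$ case of Theorem~\ref{teo:condplus}. Two points are delicate: one must guarantee that the limiting excursion genuinely remains \emph{inside} $\Omega_i$ on a fixed interval $(0,\eta]$ — so that the inner dynamics is unconstrained and the value carries the plain cost $\int_0^\eta l_i$ — and one must control the interplay between the double limit in $(n,\tau)$ and the $\liminf$ defining $(\VFp)_*$, so that no spurious contribution from the times spent on $\H$ survives and degenerates $\eta$ to $0$. Here the normal controllability \hyp{\rm C}{4} keeps the excursions nondegenerate and legitimises the reduction to $x_n\in\H$, while convexity \hyp{\rm C}{3} makes the limit trajectory admissible. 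Once these two points are secured the alternative \textbf{A)}/\textbf{B)} is complete.
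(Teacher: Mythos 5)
Your overall plan (read the alternative off the Dynamic Programming Principle through the behaviour of trajectories on or off $\H$) is the right one, but there is a genuine gap at exactly the point you yourself flag as ``the main obstacle'', and the paper avoids it by a step you are missing: by \hyp{\rm C}{3} (closedness and convexity of the $(b_i,l_i)$ sets, hence compactness of the set of regular trajectories), the infimum defining $\VFp(x,t)$ is \emph{attained} by a single optimal regular trajectory $(\trajxt,a)\in\Treg$ issued from $x$ itself. The dichotomy is then performed on this one trajectory: either there exists a sequence $\tau_k\downarrow 0$ with $\trajxt(\tau_k)\in\H$ --- and then the minimum-point property applied at the points $(\trajxt(\tau_k),t-\tau_k)\in\H\times[0,T]$, together with the DPP and the contradiction argument of \cite[Thm 2.7]{BBC1} (using \hyp{\rm C}{4}), yields \textbf{B)} --- or there is no such sequence, and then by continuity $\trajxt$ stays in one $\Omega_i$ on some interval $(0,\eta]$ and \textbf{A)} is nothing but the DPP identity along that piece.

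Your substitute --- near-optimal trajectories issued from approximating points $(x_n,t_n)$ --- does not produce this clean alternative. First, your two cases ($X_n\in\H$ for a.e.\ $s\in[0,\tau]$, versus ``$X_n$ leaves $\H$'') are not exhaustive: a near-optimal trajectory can spend positive measure both on and off $\H$, and can make excursions into $\Omega_i$ of lengths $\delta_n\to0$ starting at times $\sigma_n\to0$; neither branch of your argument covers this, and in the first branch the bound $\ell+b\cdot D\tilde\phi\ge -\HTreg$ is only available on the portion of time actually spent on $\H$. Second, and more seriously, alternative \textbf{A)} in this theorem (unlike in Theorem~\ref{teo:condplus}) requires a trajectory issued from $x$ itself that remains in $\Omega_i$ for all $s\in(0,\eta]$ with a \emph{fixed} $\eta>0$; an Ascoli limit of excursions of the $X_n$ into $\Omega_1$ may lie on $\H$ or have its excursion degenerate to a point, and invoking \hyp{\rm C}{4} to ``keep the excursions nondegenerate'' is an assertion, not an argument --- normal controllability lets you \emph{reach} $\H$ quickly, it does not prevent optimal excursions from shrinking. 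These are precisely the difficulties that the existence of an exactly optimal regular trajectory from $x$ is designed to bypass; without that step your proof is incomplete.
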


\begin{proof}
Since $x\in\H$, by assumption \hyp{\rm C}{3}, there exists a regular
optimal control $a(\cdot) \in\Treg$ such that
\begin{equation*}
    \VFp(x,t)= \int_0^t \ell\big(\trajxt(s),t-s,a(s)
    \big)\ds+g(\trajxt(t))\;. 
\end{equation*}
Moreover, by the Dynamic Programming Principle, we have, for any
$\tau>0$ 
\begin{equation*}
    \VFp(x,t)= \int_0^\tau \ell\big(\trajxt(s),t-s,a(s) \big) \ds +
    \VFp(\trajxt(\tau),t-\tau)\;.
\end{equation*}
We argue depending on whether or not there exists a sequence
$(\tau_k)_k$ converging to $0$ such that $\tau_k >0$ and
$\trajxt(\tau_k) \in \H$.

If it is NOT the case then this means that we are in the case {\bf A)}
since, for $\eta$ small enough, the trajectory  $\trajxt(\cdot)$ stays
necessarily either in $\Omega_1$ or in $\Omega_2$ on $]0,\eta]$.
Therefore we can assume for instance that
$\trajxt(\cdot)=\trajyali(\cdot)$ and take $\tau=\eta$ in the above
equality.

On the contrary, if IT IS the case, we can use the minimum point
property: assuming without loss of generality that $\phi(x,t)=(\VFp)_* (x,t)$, we extend $\phi$ to $\R^N\times[0,T]$ thanks to
Lemma~\ref{lem:extension} and write, for $k$ large enough,
\begin{equation*}
    \tilde\phi(x,t)\geq  \int_0^{\tau_k} \ell\big(\trajxt(s),t-s,a(s)
    \big)\ds + \tilde\phi(\trajxt(\tau_k),t-\tau_k)\;.
\end{equation*}
The rest of the proof is the same as \cite[Thm 2.7]{BBC1}: we obtain a
contradiction by assuming $$\phi_t(x,t)+\HTreg \big(x,t,D_\H\phi(x,t)\big) \leq -\eta < 0\,,$$ using the normal
controllability condition \hyp{\rm C}{4} instead of the more general
(and usual) one which was used in \cite{BBC1}.
\end{proof}

\subsection{Properties of sub and supersolutions}\label{pss}
\ 

\begin{theorem} \label{teo:sotto}
    Assume \hyp{\Omega}{} and \hyp{\rm C}{}. If $u:\R ^N\times[0,T] \to
  \R$ is a bounded viscosity subsolution of $u_t+H(x,t,Du)=0$, then
  $u$ is a subsolution of $u_t+\HHp(x,t,Du)=0$.
\end{theorem}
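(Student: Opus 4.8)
The plan is to observe that the three interior/interface inequalities collected in \eqref{eqn:ss-solH} are exactly the content of ``$u$ is an Ishii subsolution of $u_t+H=0$'', so that the only thing to add in order to obtain the $\HHp$-subsolution property of Definition~\ref{solviscogen} is the tangential inequality $\phi_t(x,t)+\HTreg(x,t,D_\H\phi(x,t))\le 0$ at an interface point. So I fix $\phi\in C^1(\H\times[0,T])$ and a maximum point $(x,t)$ of $u|_\H-\phi$ with $x\in\H$, $t\in(0,T)$; after subtracting $|y-x|^2+(s-t)^2$ I may assume the maximum is strict without altering $p_T:=D_\H\phi(x,t)$ or $\phi_t(x,t)$. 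Using Lemma~\ref{lem:extension} I extend $\phi$ to $\tilde\phi\in C^1(\R^N\times[0,T])$ with $\tilde\phi=\phi$ on $\H$; adding a term $c\,\chi(y)\dH(y)$, with $\chi$ a cutoff equal to $1$ near $x$, changes neither $\tilde\phi$ on $\H$ nor its tangential gradient there, so I may prescribe the normal derivative of $\tilde\phi$ at $(x,t)$ to be any value $\beta$, i.e. $D\tilde\phi(x,t)=p_T+\beta\mathbf{n}_1$.

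I first record the algebraic mechanism. Fix a regular control $a=(\alpha_1,\alpha_2,\mu)\in\Aoreg(x,t)$ and set $G:=-\psg{b_\H(x,t,a)}{p_T}-l_\H(x,t,a)$. Since $b_\H(x,t,a)$ is tangent to $\H$, one has $\psg{b_\H}{p_T}=b_\H\cdot(p_T+\beta\mathbf{n}_1)$ for every $\beta$, hence $G=\mu\,g_1(\beta)+(1-\mu)\,g_2(\beta)$ where $g_i(\beta):=-b_i(x,t,\alpha_i)\cdot(p_T+\beta\mathbf{n}_1)-l_i(x,t,\alpha_i)$. The regularity $b_i\cdot\mathbf{n}_i\ge 0$ makes $g_1$ nonincreasing and $g_2$ nondecreasing in $\beta$ while their $\mu$-combination stays constant, so there is a balancing value $\beta^*$ with $g_1(\beta^*)=g_2(\beta^*)=G$; moreover $g_i(\beta)\le H_i(x,t,p_T+\beta\mathbf{n}_1)$ for every $\beta$.

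Next I localize by a normal penalization. Using the extension with normal derivative exactly $\beta^*$, I maximize $(y,s)\mapsto u(y,s)-\tilde\phi(y,s)-\dH(y)^2/\eps$ over a small ball about $(x,t)$. Standard penalization yields maximizers $(y_\eps,s_\eps)\to(x,t)$ with $\dH(y_\eps)^2/\eps\to 0$, and the normal component of the test gradient is $\beta^*-\lambda_\eps$ if $y_\eps\in\Omega_1$ (resp. $\beta^*+\lambda_\eps$ if $y_\eps\in\Omega_2$), where $\lambda_\eps:=2|\dH(y_\eps)|/\eps\ge 0$. Assumption \hyp{\rm C}{4} forces $\lambda_\eps$ to stay bounded: were it to blow up, the relevant $H_i$ would tend to $+\infty$ (using a control with $b_i\cdot\mathbf{n}_i=\delta>0$), contradicting the subsolution inequality at $(y_\eps,s_\eps)$. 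I then pass to the limit according to the position of $y_\eps$. If $y_\eps\in\Omega_1$, inequality \eqref{eqn:ss-solH} gives $\phi_t+H_1(x,t,p_T+(\beta^*-\lambda)\mathbf{n}_1)\le 0$ for some $\lambda\ge 0$, and monotonicity yields $H_1\ge g_1(\beta^*-\lambda)=G+\lambda(b_1\cdot\mathbf{n}_1)\ge G$; the case $y_\eps\in\Omega_2$ is symmetric with $g_2$; and if $y_\eps\in\H$ the interface inequality gives $\phi_t+\min(H_1,H_2)(x,t,p_T+\beta^*\mathbf{n}_1)\le 0$ with $\min(H_1,H_2)\ge\min(g_1,g_2)(\beta^*)=G$. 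In every case $\phi_t+G\le 0$.

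Taking the supremum over $a\in\Aoreg(x,t)$ then gives $\phi_t+\HTreg(x,t,p_T)\le 0$, which is the desired inequality. The degenerate regular controls, for which no finite balancing $\beta^*$ exists (namely $\mu\in\{0,1\}$, or $b_1,b_2$ both tangent so that $g_1,g_2$ are constant), are recovered by continuity: combining \hyp{\rm C}{3} and \hyp{\rm C}{4} I approximate $a$ by regular controls with $b_i\cdot\mathbf{n}_i>0$ and the same $\mu$, apply the nondegenerate argument, and let the approximation parameter tend to $0$, using that $a\mapsto G(a)$ is continuous. The main difficulty is precisely the coupling between the two regions: a priori the subsolution delivers only one-sided ($H_1$ or $H_2$) information at the limit point, whereas $\HTreg$ mixes both dynamics; the key device is to tune the normal derivative of the extension to the balancing value $\beta^*$, so that all three possible locations of the penalized maximum produce the same lower bound $G$, and it is exactly the normal controllability \hyp{\rm C}{4} that keeps the penalization gradient bounded and makes this limiting procedure legitimate.
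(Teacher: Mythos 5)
Your argument is correct in its main (nondegenerate) case and follows essentially the same strategy as the paper's proof: extend the test function off $\H$ by Lemma~\ref{lem:extension}, penalize the normal direction through $\dH$, and tune the normal derivative of the extension using the sign condition $b_i\cdot\mathbf{n}_i\ge 0$ defining $\Aoreg$ so that the convex combination closes. The only real difference is the calibration. The paper adds a linear term $\eta\,\dH$ with $\eta$ slightly above the value $\bar\eta$ that makes the $b_1$-quantity vanish; this forces the penalized maximum into $\overline{\Omega_2}$, so only the $H_2$/interface inequality is needed, and the $b_1$-part is recovered in the limit $\eta\downarrow\bar\eta$. You instead choose the normal slope $\beta^*$ where $g_1=g_2=G$, so that whichever of the three regions the maximum falls in, the monotonicity of $g_1,g_2$ in $\beta$ returns the same bound $G$. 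Both mechanisms are sound; yours trades the extra limit in $\eta$ for a three-case discussion, and your use of the coercivity coming from \hyp{\rm C}{4} to bound the penalization gradient is legitimate (though the monotonicity of $g_1,g_2$ already absorbs the penalization term with the right sign).

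The one point that needs repair is your treatment of the degenerate regular controls, i.e.\ those with $b_1$ and $b_2$ both tangent. Approximating ``with the same $\mu$'' does not work: replacing the $b_i$ by nearby controls with $b_i\cdot\mathbf{n}_i>0$ destroys the tangency of $b_\H=\mu b_1+(1-\mu)b_2$, and restoring it forces the new weight to $1/2$ rather than $\mu$, so the approximated quantities do not converge to $G(a)$. The fix is elementary and stays within your framework: when both $b_i$ are tangent, apply the nondegenerate argument to the pure controls $(\alpha_1,\hat\alpha_2,1)$ and $(\hat\alpha_1,\alpha_2,0)$, where $b_i(x,t,\hat\alpha_i)\cdot\mathbf{n}_i=\delta>0$ is provided by \hyp{\rm C}{4} (and \hyp{\rm C}{3} if interpolation is needed); these belong to $\Aoreg(x,t)$, are nondegenerate, and yield $\phi_t+g_1\le0$ and $\phi_t+g_2\le0$ separately, whence $\phi_t+G=\mu(\phi_t+g_1)+(1-\mu)(\phi_t+g_2)\le0$. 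This is the same kind of repair the paper invokes when it refers the non-strict case back to the argument of Theorem~\ref{teo:HJ}.
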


\begin{proof} It is enough to check the subsolution condition only on $\H$
   since the property clearly holds in each $\Omega_i$ by definition.

    We recall that $u^*|_{\H\times[0,T]}$ is the restriction of $u^*$ to
  $\H\times[0,T]$. Let $\phi(\cdot)$ be a $C^1$-function on $\H$
  and $(\bar{x},\bar{t})$ a maximum point of $u^*|_{\H\times[0,T]}-\phi$ on
  $\H\times[0,T]$. Our aim is  then to prove that, for any $\a\in
  \A_0^{\rm reg}(\bar{x},\bar{t})$ we have
  \begin{equation}\label{ineq:sub.gen}
      \phi_t(\bar{x},\bar{t}) -\psg{b_\H\big(\bar{x},\bar{t},a\big)}{D_\H\phi(\bar{x},\bar{t})}
      -l_\H\big(\bar{x},\bar{t},a\big) \leq 0\,.
  \end{equation}
  This proof follows \cite[Thm. 3.1]{BBC1} so that we only mention here
  the modifications. First, we extend $\phi$ by $\tilde\phi$ given by
  Lemma~\ref{lem:extension}. Then for
  $\varepsilon\ll1$ and $(z,s)\in \H\times[0,T]$ we consider the
  function
  \begin{equation}
      (z,s)\mapsto u(z,s)-\tilde\phi(z,s)-\eta\,\dH(z)-
      \frac{\dH(z)^2}{\varepsilon^2}-|z-x|^2-|s-t|:=
      u(z,s)-\psi_\varepsilon(z,s)\,,
  \end{equation}
  where we recall that $\dH(\cdot)$ is the signed distance function to $\H$
  which is positive in $\Omega_1$ and negative in
  $\Omega_2$.
    
  Writing $a=(\alpha_1,\alpha_2,\mu)$, we assume that we are in the
  situation when $b_1(\bar x,\bar t,\alpha_1)\cdot\mathbf{n}_1(\bar
  x)<0$ (and the same for index 2), since the case of
  non-strict inequalities can be recovered by hypothesis \hyp{\rm C}{4}
  as in Thm.~\ref{teo:HJ} (recall that $a$ being a regular control, the
  opposite signs are forbidden).  We choose $\eta>\bar\eta$ where $\bar\eta$
  is a solution of the following equation (which has a solution under
  the assumption above of strict signs):
  \begin{equation*}
      \tilde\phi_t(\bar x,\bar t) -b_1(\bar x,\bar t,
      \alpha_1)\cdot\big( D\tilde\phi(\bar x,\bar t) +\bar \eta
      \mathbf{n}_2(\bar x)\big) -l_1(\bar x,\bar t,\alpha_1)=0\,.
  \end{equation*}
  The rest of the proof follows the cited reference: thanks to the
  penalization terms, for $\eps$ small enough, $u^*-\psi_\varepsilon$
  reaches its max at some point
  $(x_\eps,t_\eps)\in\overline{\Omega_2}\times[0,T]$.  
  Then, using the equation in $\Omega_2\times[0,T]$ or on $\H\times[0,T]$ leads to
  \begin{equation*}
      \tilde\phi_t(\bar x,\bar t) -b_2(\bar x,\bar
      t,\alpha_2)\cdot\big(D\tilde\phi(\bar x, \bar
      t)+\eta\mathbf{n}_2(\bar x)\big)-l_2(\bar x,\bar t,\alpha_2)\leq
      o_\eps(1)\,.
  \end{equation*}
  We let $\eps$ tend to zero first, and then $\eta$ to $\bar\eta$. Using
  the specific value of $\bar \eta$ leads to
  \begin{equation*}
      \tilde\phi_t(\bar x, \bar t)-b_\H(\bar x,\bar t,a)\cdot
      D\tilde\phi(\bar x,\bar t)-l_\H(\bar x,\bar t,a)\leq0\,,
  \end{equation*}
  that we interpret as \eqref{ineq:sub.gen} since $b_\H(\bar x,\bar
  t,a)$ has no component on the normal direction to $\H$ and by
  construction, $D_\H(\tilde\phi|_\H)=D_\H\phi$.
\end{proof}

The following lemma states a super and a sub optimality principle respectively for super and subsolutions of $w_t+H(x,t,Dw)=0$. The proof is classical (see \cite{BP2,Bl1,Bl2} and also the proof of  \cite[Lem. 3.2]{BBC1}).
\begin{lemma}\label{reverse}
    Assume \hyp{\Omega}{} and \hyp{\rm C}{}. Let $v:\R^N\times[0,T] \to \R$ be a lsc
    supersolution of $v_t+H(x,t,Dv)=0$ and 
    $u:\R^N\times[0,T] \to \R$ be a usc subsolution of
    $u_t+H(x,t,Du)=0$.
    Then, if $x \in \Omega_i$ ($i\in\{1,2\}$), we have for all $\sigma\in[0,t]$ 
    \begin{equation}\label{reverse:super} 
	v(x,t) \geq \inf_{\alpha_i(\cdot),\theta_i}
	\,\biggl[\int_0^{\sigma\wedge\theta_i}
	l_i\big(\trajyali(s),t-s,\alpha_i(s)\big) \ds
	+v\big(\trajyali(\sigma\wedge\theta_i),t-(\sigma\wedge\theta_i)\big)\;\biggr]\; ,
    \end{equation}
    and 
    \begin{equation}\label{reverse:sub} 
	u(x,t) \leq \inf_{\alpha_i(\cdot)}\sup_{\theta_i}
	\,\biggl[\int_0^{\sigma\wedge\theta_i}
	l_i\big(\trajyali(s),t-s,\alpha_i(s)\big) \ds
	+u(\trajyali(\sigma\wedge\theta_i),t-(\sigma\wedge\theta_i)\big)\;\biggr]\;,
    \end{equation}
    where $\trajyali$ is the solution of the ode \eqref{def:trajY} and
    the infimum/supremum is taken on all stopping times $\theta_i$ such that
    $\trajyali(\theta_i) \in \partial\Omega_i$ and $\tau_i \leq \theta_i \leq \bar
    \tau_i$ where $\tau_i$ is the first exit time of the trajectory
    $\trajyali$ from $\Omega_i$ and $ \bar \tau_i$ is the one from
    $\overline{\Omega}_i $. 
\end{lemma}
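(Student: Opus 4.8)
The plan is to exploit that the statement concerns a point $x\in\Omega_i$ and trajectories $\trajyali(\cdot)$ driven by the single dynamic $b_i$, so that as long as $\trajyali$ stays in $\overline{\Omega_i}$ the only relevant equation is the classical Hamilton--Jacobi--Bellman equation $w_t+H_i(x,t,Dw)=0$. By \hyp{\rm C}{1}--\hyp{\rm C}{2} the data $b_i,l_i$, and hence $H_i$, are defined and continuous up to $\overline{\Omega_i}\times[0,T]$, so the Hamiltonian makes sense along the whole trajectory up to $\bar\tau_i$. Thus the interface $\H=\partial\Omega_i$ plays no role here except as a stopping set, and the statement reduces to the standard sub- and super-optimality principles for a single HJB equation, which is precisely the content of \cite{BP2,Bl1,Bl2}.

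For the super-optimality inequality \eqref{reverse:super}, the heuristic (for smooth $v$) is that the supersolution inequality $v_t+\sup_{\alpha_i}\{-b_i\cdot Dv-l_i\}\ge0$ yields, for a control $\alpha_i(\cdot)$ realizing the supremum, $\tfrac{d}{ds}v(\trajyali(s),t-s)\le -l_i(\trajyali(s),t-s,\alpha_i(s))$; integrating on $[0,\sigma\wedge\theta_i]$ gives $v(x,t)\ge\int_0^{\sigma\wedge\theta_i}l_i\ds+v(\trajyali(\sigma\wedge\theta_i),t-(\sigma\wedge\theta_i))$, and taking the infimum over $(\alpha_i,\theta_i)$ on the right preserves the inequality. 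For the sub-optimality inequality \eqref{reverse:sub}, the subsolution inequality gives the reverse sign for \emph{every} control, $\tfrac{d}{ds}u(\trajyali(s),t-s)\ge -l_i$, so that $u(x,t)\le\int_0^{\sigma\wedge\theta_i}l_i\ds+u(\trajyali(\sigma\wedge\theta_i),t-(\sigma\wedge\theta_i))$ holds for every admissible $\theta_i$; hence it survives the $\sup_{\theta_i}$ at fixed $\alpha_i$ and then the $\inf_{\alpha_i}$, yielding the claimed $\inf_{\alpha_i}\sup_{\theta_i}$ bound.

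Making these heuristics rigorous for merely semicontinuous $v,u$ is the technical core, carried out as in the cited references: one discretizes $[0,\sigma\wedge\theta_i]$ into small intervals, applies the viscosity super/subsolution inequality on each of them and sums, passing to the limit as the mesh tends to zero (alternatively, one regularizes $v$ by inf-convolution to work with a semiconcave function to which the chain rule applies, removing the regularization using lower semicontinuity). For the super-optimality principle the delicate point is the selection, adapted to the non-differentiable supersolution, of a \emph{measurable} near-optimal control $\alpha_i(\cdot)$; this is where \hyp{\rm C}{3} enters, ensuring that the infimum over controls is attained by an admissible pair $(b_i,l_i)$ and can be selected measurably. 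The sub-optimality principle is more robust, since the pointwise inequality $\tfrac{d}{ds}u\ge-l_i$ holds simultaneously for all controls.

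The only genuinely new point compared with the whole-space references is the boundary $\H$. This is handled precisely by the stopping-time window $\tau_i\le\theta_i\le\bar\tau_i$: for $s<\tau_i$ the trajectory lies in the open set $\Omega_i$, where the equation holds in the usual sense, while stopping at any boundary-hitting time $\theta_i\le\bar\tau_i$ keeps $\trajyali(s)\in\overline{\Omega_i}$, where $b_i,l_i,H_i$ remain defined; the lower (resp. upper) semicontinuity of $v$ (resp. $u$) then lets us pass to the limit in the integrated inequalities up to the closure. I expect the measurable control selection for the super-optimality principle to be the main obstacle, the boundary bookkeeping being a routine adaptation of the arguments of \cite{BP2,Bl1,Bl2}.
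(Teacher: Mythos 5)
Your proposal follows exactly the route the paper itself takes: the paper gives no argument for this lemma beyond the citation of \cite{BP2,Bl1,Bl2} and of \cite[Lem.~3.2]{BBC1}, and your sketch is a faithful reconstruction of those classical super-/sub-optimality principles, including the correct observation that everything happens for the single Hamiltonian $H_i$ on $\overline{\Omega_i}$ and that the interface only enters through the stopping-time window $[\tau_i,\bar\tau_i]$.

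There is, however, one conceptual slip in your treatment of \eqref{reverse:sub}. You claim that the integrated inequality $u(x,t)\le\int_0^{\sigma\wedge\theta_i}l_i\ds+u(\trajyali(\sigma\wedge\theta_i),t-(\sigma\wedge\theta_i))$ holds \emph{for every} admissible $\theta_i\in[\tau_i,\bar\tau_i]$, and that this is why it ``survives'' the $\sup_{\theta_i}$. This is not justified: the subsolution inequality $u_t+H_i(x,t,Du)\le0$ is only available while $\trajyali(s)$ lies in the \emph{open} set $\Omega_i$, i.e.\ for $s<\tau_i$; on the portions of $[\tau_i,\bar\tau_i]$ where the trajectory sits on $\H$ one only knows $u_t+\min(H_1,H_2)\le0$, and the minimum may be realized by $H_j$ with $j\ne i$, so the differential inequality $\tfrac{d}{ds}u(\trajyali(s),t-s)\ge-l_i$ cannot be integrated across those times. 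What the classical argument actually yields is the inequality for \emph{some} stopping time in the window (essentially $\theta_i=\tau_i$, using the upper semicontinuity of $u$ to pass to the limit $s\uparrow\tau_i$), which is an ``$\exists\,\theta_i$'' statement --- precisely the content of $\sup_{\theta_i}$. Your stronger claim would prove the $\inf_{\theta_i}$ version of \eqref{reverse:sub}, which is false in general and would defeat the purpose of the $\inf\sup$ formulation. The final conclusion is unaffected, since exhibiting one admissible $\theta_i$ suffices for the supremum, but the justification should be corrected accordingly. The rest of the proposal (the chain-rule heuristic, the regularization/discretization for merely semicontinuous $u,v$, the measurable selection via \hyp{\rm C}{3} for the near-optimal control in the supersolution case) is consistent with the cited references.
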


The following important result highlights the fundamental
alternative: given $x\in\H$, either
there exists an optimal strategy consisting in entering in $\Omega_1$ or
$\Omega_2$, or all the optimal strategies consist in staying on $\H$
at least for a while.

\begin{theorem}\label{teo:condplus}
    Assume \hyp{\Omega}{} and \hyp{\rm C}{}. Let $v:\R^N\times[0,T] \to
    \R$ be a lsc supersolution of $v_t+H(x,t,Dv)=0$. Let
    $\phi\in C^1\big(\H\times[0,T]\big)$ and $(x,t)$ be a minimum
    point of $(z,s)\mapsto v(z,s)-\phi(z,s)$.  Then, the following
    alternative holds:
	\begin{description}
	    \item[\textbf{A)}] either there exist $\eta>0$, $i\in\{1,2\}$
	        and a sequence $x_k \in \overline{\Omega}_i $
		converging to $x$
		such that $v(x_k,t) \to v(x,t)$ and,
		for each $k$, there exists  a control
		$\alpha_i^k(\cdot)$ such that the corresponding
		trajectory $\trajyxki(s) \in \overline{\Omega}_i $ for
		all $s \in [0,\eta]$ and
		\begin{equation}\label{condA} 
		    v(x_k,t) \geq \int_0^{\eta} l_i\big(\trajyxki(s),t-s,
		    \alpha^k_i(s)\big) \ds +
		    v\big(\trajyxki(\eta),t-\eta\big) \;;
		\end{equation}
	    \item[\textbf{B)}] or there holds
		\begin{equation}
		    \label{condhyperSUP}
		    \phi_t(x,t)+\HT\big(x,t,D_\H\phi(x,t)\big)\geq0.
		\end{equation}
	\end{description}
\end{theorem}

\begin{proof}
    As in \cite[Thm. 3.3]{BBC1}, we are going to prove that if
    \textbf{A)} does not hold, then necessarily the second possibility
    holds. Up to a standard modification of $\phi$, we may assume that
    the max is strict. For $\eps>0$ we consider the function
    \begin{equation*}
	v(z,s)-\tilde\phi(z,s)-\delta\dH(z)+\frac{\dH(z)^2}{\eps^2}\;,
    \end{equation*}
 where we recall that $\dH(\cdot)$ is the signed distance function to $\H$ as in
    the proof of Theorem~\ref{teo:sotto}.
    
    There are two cases: either for $\eps$ small enough, the minimum
    point $(x_\eps,t_\eps)$ lies on $\H\times[0,T]$ and this leads
    directly to~\eqref{condhyperSUP} as in \cite[Thm. 3.3]{BBC1}; or we
    may assume that for instance, $x_\eps\in\Omega_i$ for $\eps$ small
    enough. In this second case, the argument by contradiction in
    \cite[Thm 3.3. - 2nd case]{BBC1} applies, using Lemma~\ref{reverse}.   
\end{proof}

\section{Uniqueness result}
\label{sect:uniqueness}

We first prove a local comparison result which is based on auxiliary results in the appendix. To this end, we denote by
$\Q{x_0,t_0}(r,h)$ the open cylinder
$\Q{x_0,t_0}(r,h):=B(x_0,r)\times(t_0-h,t_0)$ where $0<t_0-h<t_0<T$, whose parabolic boundary is given by $$\partial_p
\Q{x_0,t_0}(r,h):=B(x_0,r)\times\{t_0-h\}\cup \partial B(x_0,r)\times [t_0-h,t_0)\,.$$

In the sequel, we assume that $x_0 \in \H$ and that, thanks to \hyp{\Omega}{}, $r$ is small enough in order that there exists a $W^{2,\infty}$-diffeomorphism
    $\Psi=\Psi_{(x_0,r)}$ such that by setting $\tilde
    \Omega:=\Psi\big(B(x_0,r))\big)$, we have
    $$\Psi\big(\H\cap B(x_0,r)\big)=\{x_N=0\}\cap\tilde\Omega\,.$$
We denote this assumption by \hyp{\Omega}{x_0}.

\begin{theorem}\label{thm:main.comp}Assume \hyp{\Omega}{x_0} and
    \hyp{\rm C}{}. If $u$ and $v$ are respectively a bounded usc
    subsolution and a bounded lsc supersolution of $w_t+\HHm(x,t,Dw)=0$
    in $\Q{x_0,t_0}(r,h)$ . Then 
    \begin{equation}
	\Vert(u-v)_+\Vert_{L^\infty(\Q{x_0,t_0}(r,h))} \leq
	\Vert(u-v)_+\Vert_{L^\infty(\partial_p \Q{x_0,t_0}(r,h))}  .
    \end{equation}
\end{theorem}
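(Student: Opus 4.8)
Under $(\mathbf{H}_\Omega^{x_0})$ and $(\mathbf{H}_C)$, if $u,v$ are respectively bounded usc subsolution and bounded lsc supersolution of $w_t + \HHm(x,t,Dw)=0$ in a cylinder $Q^{(x_0,t_0)}(r,h)$ with $x_0\in\H$, then $(u-v)_+$ is controlled on the cylinder by its value on the parabolic boundary.

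Let me think carefully about how to prove this.

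**Setup.** This is a *local* comparison result. The standard viscosity comparison machinery (doubling variables) is the backbone, but the genuine difficulty is the discontinuity of the Hamiltonian across $\H$. The subsolution $u$ satisfies, beyond the two interior subsolution inequalities, the *tangential* condition $u_t + H_T(x,t,D_\H u)\le 0$ on $\H$ (this is the content of being a subsolution of $\HHm$, Definition 3.4). The supersolution only satisfies the Ishii condition $v_t + \max(H_1,H_2)\ge 0$ on $\H$. So the extra subsolution inequality on $\H$ is precisely the additional structure that must make comparison work.

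**The flattening step.** The first move is to use $(\mathbf{H}_\Omega^{x_0})$ to flatten the interface: apply the $W^{2,\infty}$-diffeomorphism $\Psi$ so that, in new coordinates, $\H$ becomes $\{x_N=0\}$ and $\Omega_1,\Omega_2$ become the two half-spaces locally. Because $\Psi$ is only $W^{2,\infty}$ the transformed dynamics $b_i$ remain Lipschitz and the structure conditions $(\mathbf{H}_C)$ (boundedness, Lipschitz in space, convexity, and crucially normal controllability $(\mathbf{H}_C^4)$) are preserved, possibly with altered constants. After this reduction I may assume $\H=\{x_N=0\}$, $\mathbf{n}_1=-e_N$, $\mathbf{n}_2=e_N$.

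**Regularizing the subsolution — the main obstacle.** This is the heart of the proof and the step I expect to be hardest. Because we only have *normal* controllability, $u$ need not be Lipschitz a priori, so I cannot directly double variables and close the estimate on $\H$. The introduction explicitly flags the remedy: perform a *double regularization of $u$ in the tangential variables only*. First take a sup-convolution of $u$ in the $(N-1)$ tangential space variables (and possibly time), producing $u^\mu$ which is Lipschitz in the tangential directions and still a subsolution (up to a small error). Here the role of $(\mathbf{H}_C^4)$ is dual, as the introduction stresses: normal controllability lets one bound the normal derivative of the subsolution by its tangential derivatives, so that a function Lipschitz in the tangential variables is automatically Lipschitz in *all* variables. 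Second, mollify $u^\mu$ in the tangential variables by convolution with a smooth kernel to get a subsolution that is $C^1$ in the tangent directions — one must check that convexity $(\mathbf{H}_C^3)$ and the sup structure of $H_i,\HT$ make the mollified function still a subsolution (convex combinations of subsolutions remain subsolutions for these convex Hamiltonians). The outcome is a subsolution $\tilde u$ which is Lipschitz in all variables and regular enough tangentially that its restriction to $\H$ can be used as a test function against $v$.

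**Closing the comparison.** With $\tilde u$ Lipschitz, I would run the standard doubling of variables between $\tilde u$ and $v$ with penalization $|x-y|^2/\e^2 + |t-s|^2/\e^2$ plus localization terms pinning the maximum inside $Q^{(x_0,t_0)}(r,h)$ away from the parabolic boundary (this localization is exactly what turns a global comparison into the stated local estimate with the parabolic-boundary norm on the right). The doubling produces a maximum point $(\hat x,\hat t,\hat y,\hat s)$. Three cases arise according to where $\hat x,\hat y$ sit. If both lie in a single open $\Omega_i$, the classical argument closes using the interior equation. The delicate case is when the points approach $\H$: there I use the additional tangential subsolution inequality $\tilde u_t + H_T(x,t,D_\H \tilde u)\le 0$ against the supersolution's $v_t+\max(H_1,H_2)\ge 0$. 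Comparing these requires relating the penalization gradient $p_\e=(\hat x-\hat y)/\e^2$ to the tangential gradient; because $\tilde u$ is now Lipschitz and tangentially smooth, and because $\HT\le\max(H_1,H_2)$ on $\H$ (established in Thm 3.3), the two inequalities combine to give the needed sign. Letting the mollification/sup-convolution parameters and then $\e$ go to zero, and using uniform continuity of $g$ and the moduli from $(\mathbf{H}_C^1)$--$(\mathbf{H}_C^2)$, I recover $(\tilde u-v)_+\le \sup_{\partial_p}(u-v)_+ + o(1)$, and finally pass $\mu\to0$ to replace $\tilde u$ by $u$. I expect the technical crux to be verifying that the tangential-only sup-convolution genuinely preserves the subsolution property on $\H$ while, via normal controllability, yielding full Lipschitz regularity — this is the one place where $(\mathbf{H}_C^4)$ is indispensable and where the weakened controllability (relative to \cite{BBC1}) forces the new two-step regularization rather than a direct argument.
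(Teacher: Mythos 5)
Your flattening step and your two-step regularization of the subsolution (tangential sup-convolution made fully Lipschitz via the normal controllability \hyp{\rm C}{4}, then tangential mollification) are exactly what the paper does in its Appendix (Lemmas~\ref{lem:flat.sup.conv} and \ref{lem:flat.conv}), and Theorem~\ref{thm:main.comp} itself is reduced to the flat case precisely as you describe. The genuine gap is in your closing step on the interface. You propose to double variables and to play the subsolution inequality $u_t+\HT(x,t,D_\H u)\le 0$ against the supersolution inequality $v_t+\max(H_1,H_2)(y,t,Dv)\ge 0$, claiming that $\HT\le\max(H_1,H_2)$ ``gives the needed sign''. It does not: subtracting the two viscosity inequalities at the doubled maximum yields $\max(H_1,H_2)(\hat y,\cdot,q)-\HT(\hat x,\cdot,p)\ge 0$, and since $\max(H_1,H_2)\ge \HT$ this quantity has no reason to be small --- the inequality between the Hamiltonians points in exactly the wrong direction for a comparison argument. (The inequality $\HT\le\max(H_1,H_2)$ is used in the paper only in the proof of Theorem~\ref{teo:HJ}, to show that the value functions are \emph{supersolutions}; it cannot be recycled here.) A general supersolution of the $\HHm$-problem satisfies no $\HT$-supersolution inequality, so there is no pair of matching inequalities on $\H$ to subtract.

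What the paper does instead is non-PDE at this point: since the regularized subsolution $\tilde u_\alpha^\eps-\eta t$ is $C^1$ in $(x',t)$, it is an admissible test function at a putative interior minimum point of $v-(\tilde u_\alpha^\eps-\eta t)$ on $\H$, and one invokes the alternative of Theorem~\ref{teo:condplus}: either \textbf{B)} the test function satisfies $\phi_t+\HT\ge 0$ there, which contradicts the \emph{strict} $\HT$-subsolution inequality of $\tilde u_\alpha^\eps-\eta t$; or \textbf{A)} there is a trajectory entering $\overline{\Omega_i}$ along which a super-optimality inequality holds, and this is contradicted using the sub-optimality principle for subsolutions (Lemma~\ref{reverse}). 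This control-theoretic alternative is the ingredient that replaces the impossible doubling on $\H$, and it is missing from your argument; without it (or an equivalent substitute) the proof does not close.
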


\begin{proof} We make the change of variable : $\tilde u(x,t):=u(\Psi^{-1}(x),t\big)$, $\tilde
    v(x,t):=v\big(\Psi^{-1}(x),t\big)$. The functions $\tilde u, \tilde
    v$ are respectively sub and supersolution of \eqref{eqn-flat} with $\tilde Q=\tilde\Omega \times (t_0-h,t_0)$, for an Hamiltonian $\tHHm$ associated to 
 $$\tilde b_i(x,t,\cdot):=D\Psi(\Psi^{-1}(x))b_i\big(\Psi^{-1}(x),t,\cdot\big)\,,\
    \tilde l_i(x,t,\cdot):=l_i\big(\Psi^{-1}(x),t,\cdot\big)\quad\text{for }x\in\tilde\Omega,\ t\in [t_0-h,t_0]\,.$$
These dynamics and costs satisfy \hyp{\rm C}{} for some new constants denoted by $\tilde M_b,\tilde L_b, \tilde M_l, \tilde m_l, \tilde \delta$.

We apply Lemma~\ref{lem:flat.comp} which gives \eqref{ineq:flat.comp} which is exactly the result we want by making the change back.
\end{proof}

We now turn to one of our main results, which is the
\begin{theorem}\label{thm:comp.global}
    Assume \hyp{\Omega}{} and \hyp{\rm C}{}.  If $u$ is a bounded, usc subsolution of (\ref{HHm-eqn}) and $v$ is a bounded, lsc supersolution of (\ref{HHm-eqn}), satisfying $u(x,0) \leq v(x,0)$ in $\R^N$, then
    $u \leq v $ in $\R^N \times (0,T)$.
\end{theorem}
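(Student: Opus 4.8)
The plan is to deduce the global comparison result from the local comparison result of Theorem~\ref{thm:main.comp} by a standard ``doubling in time via $-C/(T-t)$'' reduction combined with a covering argument, adapted to the interface geometry. First I would reduce to a strict subsolution: for $\gamma>0$ set $u_\gamma:=u-\gamma/(T-t)$, which (using that $\HHm$ is of first order and the supersolution/subsolution structure) is a strict subsolution in the sense that it gains a positive amount $\gamma/(T-t)^2$ in the time derivative. It then suffices to prove $u_\gamma\leq v$ on $\R^N\times(0,T)$ and let $\gamma\to0$. This strictness is what will let me rule out an interior maximum of $u_\gamma-v$ and push any positive value of the difference toward $t=0$, where the ordering is assumed.

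The heart of the argument is local. Suppose for contradiction that $\sup_{\R^N\times(0,T)}(u_\gamma-v)=:M>0$. Since $u,v$ are bounded, the strictness in time near $t=T$ forces the relevant supremum to be attained (in the usc/lsc sense) at some point $(\bar x,\bar t)$ with $\bar t<T$ bounded away from $T$; the $-\gamma/(T-t)$ penalization controls the behavior as $t\to T$. Around such a point I would place a small cylinder $\Q{\bar x,\bar t}(r,h)$. If $\bar x\in\Omega_1$ or $\bar x\in\Omega_2$, a classical interior comparison argument in the smooth region applies and yields a contradiction with the maximality, since $u_\gamma$ is a strict subsolution there. If $\bar x\in\H$, this is precisely the situation covered by the local interface comparison Theorem~\ref{thm:main.comp}: applying it on $\Q{\bar x,\bar t}(r,h)$ (whose hypothesis \hyp{\Omega}{\bar x} holds by \hyp{\Omega}{} for $r$ small), one gets
\begin{equation*}
\Vert(u_\gamma-v)_+\Vert_{L^\infty(\Q{\bar x,\bar t}(r,h))}\leq
\Vert(u_\gamma-v)_+\Vert_{L^\infty(\partial_p\Q{\bar x,\bar t}(r,h))}\,.
\end{equation*}
This bounds the value of the difference at the interior point $(\bar x,\bar t)$ by its values on the parabolic boundary, which again contradicts the assumption that the supremum is strictly interior, after a standard argument propagating the maximum backward in time to $t=0$.

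To organize the propagation I would use a finite-time-step induction: cover $(0,T)$ by finitely many time slabs of width $h$ and show slab by slab, starting from $t=0$ where $u(\cdot,0)\leq v(\cdot,0)$, that $u_\gamma\leq v$ on each slab, using at each step the local comparison on cylinders centered at points of the slab (both interior to the $\Omega_i$ and on $\H$) together with a compactness/covering argument in the space variable. The main obstacle I expect is twofold: first, the possible discontinuity of $u$ and $v$ means I cannot simply evaluate at a maximum point and must work consistently with the semicontinuous envelopes and attain the supremum through maximizing sequences; second, since $\R^N$ is unbounded I must confirm the supremum is genuinely attained (or approached at a controlled location), which is where boundedness of $u,v$ together with the $-\gamma/(T-t)$ time penalization — and the finite propagation speed implicit in the bounded dynamics $M_b$ — do the work. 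The interface case itself is not the difficulty here, precisely because the delicate analysis on $\H$ (the sup-convolution in tangent variables and the normal controllability) has already been absorbed into Theorem~\ref{thm:main.comp}.
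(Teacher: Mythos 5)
Your overall strategy is the right one and matches the paper's: reduce to a strict subsolution, then derive a contradiction at a maximum point of the difference by invoking the local comparison result (Theorem~\ref{thm:main.comp}) on a small cylinder, treating the cases $x_0\in\Omega_i$ classically. However, there are two concrete gaps in how you implement it. First, your penalization $u_\gamma=u-\gamma/(T-t)$ acts only in time: it gives strictness and rules out $t\to T$, but it does nothing as $|x|\to\infty$, so the supremum of $u_\gamma-v$ over the unbounded set $\R^N\times(0,T)$ need not be attained. You acknowledge this and appeal to ``finite propagation speed implicit in $M_b$,'' but no such argument is developed, and a cone-of-dependence argument across the interface is not routine here. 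The paper resolves this by taking $u_\mu=\mu u+(1-\mu)\psi$ with $\psi(x,t)=-Kt-(1+|x|^2)^{1/2}$ (Lemma~\ref{lem:comp.global.subsol}): convexity of $H_1,H_2,H_T$ makes $u_\mu$ a strict subsolution, and the term $-(1-\mu)(1+|x|^2)^{1/2}$ forces $u_\mu\to-\infty$ at spatial infinity, so the sup is a max.

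Second, even granting a maximum point $(\bar x,\bar t)$ with $\bar x\in\H$, the conclusion of Theorem~\ref{thm:main.comp},
\begin{equation*}
\Vert(u_\gamma-v)_+\Vert_{L^\infty(\Q{\bar x,\bar t}(r,h))}\leq
\Vert(u_\gamma-v)_+\Vert_{L^\infty(\partial_p\Q{\bar x,\bar t}(r,h))}\,,
\end{equation*}
does not by itself contradict interior attainment of the supremum: the parabolic boundary contains the lateral part $\partial B(\bar x,r)\times[\bar t-h,\bar t)$, on which $u_\gamma-v$ may still equal $M$ (the maximum need not be isolated, and your time penalization does not decrease the difference in the spatial directions). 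Your slab-by-slab induction only controls the bottom $B(\bar x,r)\times\{\bar t-h\}$, not the lateral boundary, so the covering argument does not close. The paper fixes this by replacing $u_\mu$ with $\bar u_\mu(x,t)=u_\mu(x,t)+(1-\mu)^2(t-t_0-|x-x_0|^2)$, checked to remain a subsolution for $\mu$ close to $1$, whose quadratic term forces $\bar u_\mu-v<M_\mu$ strictly on the whole parabolic boundary of the cylinder centered at the max point. You need an analogous strict spatial localization pinned at the maximum point; without it and without the spatial coercivity, the contradiction does not follow.
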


\begin{proof} We first prove the 

\begin{lemma}\label{lem:comp.global.subsol}
    For $K>0$ large enough, $\psi(x,t):=-Kt-(1+|x|^2)^{1/2}$
    satisfies $\psi_t+\HHm(x,t,D\psi)\leq-1$ in $\R^N \times (0,T)$.
\end{lemma}

\begin{proof} 
    We just estimate as follows:
$$\psi_t+\HHm(x,t,D\psi)\leq -K+M_b|D\psi|+M_l\leq -K+M_b+M_l \; .$$
    Hence taking $K\geq M_b+M_l+1$ yields the result.
\end{proof}

Using the function $\psi$ of Lemma~\ref{lem:comp.global.subsol}, we introduce, for $\mu\in(0,1)$ close to $1$, the function $u_\mu(x,t):=\mu
    u(x,t)+(1-\mu)\psi(x,t)$. Because of the convexity properties of $H_1,H_2,H_T$, it satisfies
    $(u_\mu)_t+\HHm(x,t,Du_\mu)\leq-(1-\mu)$.  Then we consider
    $$M_\mu:=\sup_{\R^N\times[0,T]}\big(u_\mu(x,t)-v(x,t)\big)\,.$$ 
Since $u_\mu(x,t)\to- \infty$ as $|x|\to\infty$ (uniformly with respect to $t\in[0,T]$) and $v$ is bounded, this ``sup'' is actually a ``max'' and it is achieved at $(x_0,t_0)$. Notice also that $M_\mu \to M:=\sup_{\R^N\times[0,T]}\big(u(x,t)-v(x,t)\big))$ as $\mu \to 1$. We argue by contradiction, assuming that $M>0$, which implies that $M_\mu >0$ for $\mu$ close enough to $1$. From now on, we assume that we have chosen such a $\mu$ and therefore $M_\mu >0$.

Next we remark that $t_0 >0$ since $u_\mu(x,0)-v(x,0) \leq 0$ in $\R^N$ and we first treat the case when $x_0\in \H$. In that way, since \hyp{\Omega}{} holds, we can choose $r>0$, small enough in order that \hyp{\Omega}{x_0} holds. On the other hand, we choose any $h$ such that $t_0-h\geq 0$, say $h=t_0$.
    
The next step consists in introducing the function 
    $$\bar u_\mu(x,t):=u_\mu(x,t)+(1-\mu)^2\big(t-t_0-|x-x_0|^2\big)\; .$$
We claim that $\bar u_\mu$ is a subsolution of $(\bar u_\mu)_t+\HHm(x,t,D\bar
    u_\mu)=0$ for $\mu$ close enough to 1. Indeed, a direct computation gives
\begin{align*}
(\bar u_\mu)_t+\HHm(x,\bar u_\mu,D\bar u_\mu) & \leq 
    (u_\mu)_t+\HHm(x,u_\mu,D u_\mu)+(1-\mu)^2\{1+2M_b r\}\\
    & \leq  -(1-\mu) + (1-\mu)^2\{1+2M_b r\} \leq 0
\end{align*}
for $\mu$ sufficiently close to 1.

    Thus, we use Theorem~\ref{thm:main.comp} with the pair of
    sub/supersolution $(\bar u_\mu,v)$ and we  obtain in particular $$
    M_\mu=u_\mu(x_0,t_0)-v(x_0,t_0)=\bar u_\mu(x_0,t_0)-v(x_0,t_0)\leq
    \Vert(\bar u_\mu-v)_+\Vert_{L^\infty(\partial_p
	\Q{x_0,t_0}(r,h))}\,.$$ 
    However, on the parabolic boundary
    $(\bar u_\mu-v)< M_\mu$. Indeed, on $\partial B(x,r)\times(t_0-h,t_0)$, we have
  $$  \bar u_\mu(x,t)-v(x,t) = u_\mu(x,t)-v(x,t)+(1-\mu)^2\big(t-t_0-r^2\big)\leq M_\mu -(1-\mu)^2r^2\; ,$$
while on $B(x_0,r)\times\{t_0-h\}$, 
$$  \bar u_\mu(x,t)-v(x,t) = u_\mu(x,t)-v(x,t)+(1-\mu)^2\big(t-t_0-|x-x_0|^2\big)\leq M_\mu -(1-\mu)^2 h\; .$$
This gives a contradiction. 

We can argue in the same way if $x_0 \in \Omega_1$ or $x_0 \in \Omega_2$ : in fact this is even easier since we may choose $r$ such that either $\overline B(x_0,r)\subset \Omega_1$ or $\overline B(x_0,r)\subset \Omega_2$; with this choice we only deal with classical Hamilton-Jacobi Equations without discontinuities and we have just to apply classical results.

The contradiction shows that $M\leq 0$ and the proof is complete.
\end{proof}

As a consequence, we have the following
\begin{theorem}\label{thm:Um.Up}
    Assume \hyp{g}{}, \hyp{\Omega}{} and \hyp{\rm C}{}. Then\\
     (i) The value function $\VFm$ is continuous and the unique solution of 
 \begin{align}
	u_t + \HHm(x,t, Du) &= 0  \hbox{ in }  \R^N \times (0,T)\;,\label{eqn-minus}\\
   u(x,0) &= g(x)  \hbox{ in }  \R^N\;.\label{eqnid}
\end{align}
(ii) $\VFm$ is the minimal supersolution of \eqref{Bellman-Om}-\eqref{Bellman-H-sub}-\eqref{Bellman-H-sup}-\eqref{eqnid}. The value function $\VFp$ is also continuous
and the maximal subsolution of \eqref{Bellman-Om}-\eqref{Bellman-H-sub}-\eqref{Bellman-H-sup}-\eqref{eqnid}.
\end{theorem}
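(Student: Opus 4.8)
The plan is to split the statement into a purely PDE part, built on the strong comparison principle and concerning $\VFm$, and a control-theoretic part concerning $\VFp$, for which no PDE characterization is available.

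\emph{Part (i) and the minimality of $\VFm$.} By Theorem~\ref{teo:HJ} and Definition~\ref{solviscogen}, $(\VFm)^*$ is a subsolution and $(\VFm)_*$ is a supersolution of \eqref{eqn-minus}. First I would settle the initial condition: since the dynamics are bounded by $M_b$ and $g$ is continuous by \hyp{g}{}, every trajectory satisfies $|\trajxt(s)-x|\le M_b s$, so that $|J(x,t;(\trajxt,\a))-g(x)|\le M_l\,t+\omega_g(M_b t)$ uniformly in the controls, $\omega_g$ being a modulus of continuity of $g$; letting $t\to0^+$ gives $(\VFm)^*(x,0)=(\VFm)_*(x,0)=g(x)$. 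Then Theorem~\ref{thm:comp.global}, applied to the pair $\big((\VFm)^*,(\VFm)_*\big)$, yields $(\VFm)^*\le(\VFm)_*$ on $\R^N\times(0,T)$; since the reverse inequality is automatic, $\VFm$ is continuous and solves \eqref{eqn-minus}-\eqref{eqnid}. Uniqueness follows by using Theorem~\ref{thm:comp.global} in both directions: for another bounded solution $u$ with $u(\cdot,0)=g$, comparing $(u^*,\VFm)$ and $(\VFm,u_*)$ forces $u=\VFm$. For the minimality claim in (ii), any supersolution $v$ of \eqref{Bellman-Om}-\eqref{Bellman-H-sub}-\eqref{Bellman-H-sup}-\eqref{eqnid} is, by Definition~\ref{solviscogen}, a supersolution of \eqref{eqn-minus} with $v_*(\cdot,0)\ge g$; comparing it with the subsolution $\VFm=(\VFm)^*$ gives $\VFm\le v$.

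\emph{Part (ii), the value function $\VFp$.} Here the PDE route is unavailable, because $(\VFp)^*$ need not satisfy the singular inequality on $\H$ and hence is not a subsolution of \eqref{eqn-minus}; the argument must run at the level of trajectories. The key intermediate statement I would establish is a one-sided (sub-)optimality principle along regular trajectories: if $u$ is any subsolution of \eqref{HHp-eqn}, then for every $(\trajxt,\a)\in\Treg$ and $0\le s_1\le s_2\le t$,
\[
 u^*(\trajxt(s_1),t-s_1)\le \int_{s_1}^{s_2}\ell\big(\trajxt(s),t-s,a(s)\big)\ds + u^*(\trajxt(s_2),t-s_2).
\]
On the time-intervals where $\trajxt\in\Omega_i$ this is the subsolution inequality for $u_t+H_i=0$ integrated along the trajectory (cf. Lemma~\ref{reverse} and \eqref{reverse:sub}); on the intervals where $\trajxt\in\H$ it comes from the tangential inequality $u_t+\HTreg(x,t,D_\H u)\le0$ satisfied by a subsolution of \eqref{HHp-eqn}, the regular control $a(s)\in\Aoreg$ producing precisely the tangential velocity $b_\H$. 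Granting this, maximality follows by fixing a near-optimal regular trajectory for $\VFp(x,t)$, taking $s_1=0$, $s_2=t$, and using the initial condition $u^*(\cdot,0)\le g$, which gives $u^*(x,t)\le J(x,t;(\trajxt,\a))\le \VFp(x,t)+\varepsilon$. Continuity is then obtained by squeezing: applying this maximality inequality to the subsolution $u=(\VFp)^*$ (a subsolution of \eqref{HHp-eqn} by Theorem~\ref{teo:HJ}) gives $(\VFp)^*\le\VFp$, so $\VFp$ is usc; for the lower bound I would run the matching super-optimality principle for $(\VFp)_*$, built from the fundamental alternative of Theorem~\ref{teo:condplus.VFp} (case \textbf{A)} entering some $\Omega_i$, or case \textbf{B)} the tangential supersolution inequality on $\H$) together with the supersolution property of $(\VFp)_*$ inside each $\Omega_i$, to conclude $(\VFp)_*\ge\VFp$. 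Since always $(\VFp)_*\le\VFp\le(\VFp)^*$, this yields $\VFp$ continuous and the maximal subsolution.

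The main obstacle is exactly the sub- and super-optimality principles along trajectories that alternate between the open regions and the interface: making rigorous, in the viscosity sense and for merely semicontinuous $u$, the chaining of the inequalities across the a priori complicated set $\E_\H$, and controlling the transition times, is the delicate point. The normal controllability \hyp{\rm C}{4} is what makes the regular trajectories interact correctly with $\HTreg$, and it is also what forces the $\VFp$ argument to remain at the control level, rather than reducing, as for $\VFm$, to the clean comparison of Theorem~\ref{thm:comp.global}.
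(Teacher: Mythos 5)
Your treatment of part (i) and of the minimality of $\VFm$ is correct and coincides with the paper's: both rest on Theorem~\ref{teo:HJ} plus the global comparison Theorem~\ref{thm:comp.global} (your explicit check of the initial condition via $|\trajxt(s)-x|\le M_b s$ is a detail the paper leaves implicit, and it is fine). The problem is in part (ii), where your entire argument for the maximality and continuity of $\VFp$ is reduced to a single unproved claim: the sub-optimality principle $u^*(\trajxt(s_1),t-s_1)\le \int_{s_1}^{s_2}\ell\,\ds + u^*(\trajxt(s_2),t-s_2)$ along \emph{every} regular trajectory, for a \emph{merely usc} subsolution $u$ of \eqref{HHp-eqn}. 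This is not a technical footnote; it is essentially equivalent to the statement being proved, and the obstruction you yourself flag — chaining viscosity inequalities across the set $\E_\H$, which is only a closed set whose complement is a countable union of excursion intervals — is precisely where a direct argument breaks down. Lemma~\ref{reverse} gives the sub-optimality principle in $\Omega_i$ only up to a stopping time $\theta_i$ lying somewhere between the exit times from $\Omega_i$ and $\overline{\Omega_i}$ (not up to the prescribed re-entry time into $\H$), the tangential inequality $u_t+\HTreg\le0$ only controls the restriction $u|_{\H}$ along time-intervals contained in $\E_\H$ (whereas $\E_\H$ need not contain any interval), and the upper semicontinuity of $u$ gives inequalities in the wrong direction when you try to glue the countably many pieces at accumulation points of $\E_\H$. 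No mechanism is offered to overcome any of this.

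The paper deliberately avoids proving such a trajectory-wise principle for general subsolutions. Its route is: by Theorem~\ref{teo:sotto} any subsolution $u$ of the Bellman system satisfies the $\HTreg$-inequality on $\H$; one then compares $u$ directly with the single supersolution $(\VFp)_*$ by the local PDE machinery of the Appendix — a sup-convolution of $u$ in the tangential and time variables (made possible by the normal controllability \hyp{\rm C}{4}, which converts tangential Lipschitz bounds into full Lipschitz bounds via the coercivity of Lemma~\ref{coer:H}), followed by mollification, producing a tangentially $C^1$ strict subsolution that can be used as a test function for $(\VFp)_*$ on $\H$; at such a test point the alternative of Theorem~\ref{teo:condplus.VFp} (enter some $\Omega_i$, or the $\HTreg$-supersolution inequality holds) excludes an interior minimum of the difference, exactly as in Lemma~\ref{lem:flat.comp}. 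If you want to keep a trajectory-based proof, you would still need this regularization step first (so as to work with a Lipschitz, tangentially smooth subsolution), and you would need to supply the gluing argument across $\E_\H$; as written, the proposal replaces the hard part of the theorem with an assertion.
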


\begin{proof}The proof of (i) is a direct consequence of Theorem~\ref{teo:HJ} and \ref{thm:comp.global} : indeed $(\VFm)^*$ and $(\VFm)_*$ are respectively sub and supersolution of \eqref{eqn-minus} by Theorem~\ref{teo:HJ} and since $(\VFm)^*(x,0)=(\VFm)_*(x,0)=g(x)$ in $\R^N$, Theorem~\ref{thm:comp.global} implies that $(\VFm)^*\leq (\VFm)_*$ in $\R^N\times [0,T]$, which implies that $\VFm$ is continuous because $(\VFm)_* \leq \VFm \leq (\VFm)^*$ in $\R^N\times [0,T]$ and therefore $(\VFm)_* = \VFm = (\VFm)^*$ in $\R^N\times [0,T]$. As a consequence $\VFm$ being both upper and lower semicontinuous, it is continuous. The uniqueness is a direct consequence of Theorem~\ref{thm:comp.global}.

For (ii), the first part is also a direct consequence of Theorem~\ref{thm:comp.global} since any supersolution of \eqref{Bellman-Om}-\eqref{Bellman-H-sub}-\eqref{Bellman-H-sup}-\eqref{eqnid} is a supersolution of \eqref{eqn-minus}-\eqref{eqnid}.

Finally, for $\VFp$, we follow the same idea as for $\VFm$ above and of \cite{BBC1} : if $u$ is a subsolution of \eqref{Bellman-Om}-\eqref{Bellman-H-sub}-\eqref{Bellman-H-sup}-\eqref{eqnid}, then by Theorem~\ref{teo:sotto}, it satisfies
$$ u_t  + \HTreg (x,t,Du)\leq 0 \quad \hbox{on  }\H\; ,$$
and in order to compare it with the supersolution $(\VFp)_*$, we use Theorem~\ref{teo:condplus.VFp} (instead of Theorem~\ref{teo:condplus} for the supersolutions in the case of $\HHm$) together with the regularization of the appendix (done on $\HHp$ and not $\HHm$). We skip the details since it is a straightforward adaptation of the proof of Theorems~\ref{thm:main.comp}-\ref{thm:comp.global}.

Notice that, as a consequence, we have $(\VFp)^* \leq (\VFp)_*$ in  $\R^N\times [0,T]$ since $(\VFp)^*$ is a subsolution of \eqref{Bellman-Om}-\eqref{Bellman-H-sub}-\eqref{Bellman-H-sup}-\eqref{eqnid},
which implies the continuity of $\VFp$.
\end{proof} 
\begin{remark} We emphasize the key role of Theorem~\ref{teo:condplus.VFp}: $\VFp$ is the only supersolution of the $\HHp$-equation for which we have such a property and this is why we do not have a complete comparison result for this equation (contrary to the $\HHm$ one).
\end{remark}

\section{Stability}
\label{sect:stability}

In  this section we prove stability results when we have a sequence of 
dynamics and costs $b_{i}^{\eps},l_{i}^{\eps}, g^{\eps}$ converging locally 
uniformly. Let us begin with a standard stability result for sub/super 
solutions.

\begin{theorem}\label{thm:main.stability}
    Assume \hyp{\Omega}{} and that, for all $\eps >0$, $b_1^\eps,b_2^\eps,
   l_1^\eps,l_2^\eps$ satisfy \hyp{\rm C}{1}-\hyp{\rm C}{3} with 
   constants uniforms in $\eps$.
   Let
   $H_i^\eps$ ($i=1,2$) and $\HT^\eps$ be defined as in \eqref{def:Ham}
   and \eqref{def:HamHT} respectively with these dynamics and costs. If
   $$\begin{aligned}
   (b_1^\eps,b_2^\eps,l_1^\eps,l_2^\eps) &\rightarrow (b_1,
   b_2,l_1,l_2) \text{ locally uniformly in } \R^N \times  [0,T] \times 
   A\,,\\
   g^{\eps} &\rightarrow g \text{ locally uniformly in }\R^{N}\,,
   \end{aligned}
    $$
   then the following holds
   \begin{itemize} 
       \item[(i)] if, for all $\eps >0$, $v_\eps$ is a lsc supersolution of  
      \begin{equation} \label{eqHHn}
	   u_t + \HHm_\eps(x,t, Du) = 0  \hbox{ in }  \R^N \times (0,T),  
       \end{equation}
then $ \underline{v}=\liminf_* v_\eps $ is a  lsc supersolution of  
\begin{equation}\label{eqHHlim} 
    u_t + \HHm(x,t, Du) = 0  \hbox{ in }  \R^N \times (0,T),  
\end{equation} where
$\HHm$ is defined as in  \eqref{def:Ham}  and \eqref{def:HamHT} through
the functions  $(b_1, b_2)$ and $(l_1, l_2)$.

\item[(ii)] If, for $\eps >0$, $u_\eps$ is an usc
subsolution of \eqref{eqHHn} and if $b_1,b_2$ satisfy
\hyp{\rm C}{4} then $\bar{u} = \limsup^*u_\eps$ is a subsolution of \eqref{eqHHlim}.
\end{itemize}
\end{theorem}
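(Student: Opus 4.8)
The proof rests on the method of half-relaxed limits, adapted to the interface. The preliminary remark is that, since each $A_i$ is compact and $(b_i^\eps,l_i^\eps)\to(b_i,l_i)$ locally uniformly, one has $H_i^\eps\to H_i$ locally uniformly on $\overline{\Omega_i}\times[0,T]\times\R^N$ (in particular up to $\H$), and hence $\max\{H_1^\eps,H_2^\eps\}\to\max\{H_1,H_2\}$ and $\min\{H_1^\eps,H_2^\eps\}\to\min\{H_1,H_2\}$ locally uniformly on $\H\times[0,T]\times\R^N$. In the open regions $\Omega_1,\Omega_2$, statements (i) and (ii) reduce to the classical Barles--Perthame stability for a single continuous Hamiltonian $H_i^\eps$, and require no controllability.

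The conditions on $\H$ for the plain $H$-equation follow by a case analysis on the location of the approximating points, still without controllability. For (i), let $(x,t)\in\H\times(0,T)$ be a strict local minimum of $\underline v-\phi$ and $(x_\eps,t_\eps)\to(x,t)$ the associated minima of $v_\eps-\phi$ with $v_\eps(x_\eps,t_\eps)\to\underline v(x,t)$. Passing to a subsequence, $x_\eps$ stays in one of $\Omega_1$, $\Omega_2$ or $\H$. If $x_\eps\in\Omega_i$, the equation there gives $\phi_t+H_i^\eps(x_\eps,t_\eps,D\phi)\geq0$, and the uniform convergence up to $\H$ yields $\phi_t+H_i(x,t,D\phi)\geq0$, hence $\phi_t+\max\{H_1,H_2\}(x,t,D\phi)\geq0$; if $x_\eps\in\H$, the $\max$-inequality for $v_\eps$ passes to the limit directly. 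The subsolution $\min$-inequality for $\bar u$ in (ii) is symmetric. This proves (i) completely (recall that no tangential condition is required for supersolutions) and settles everything in (ii) except the tangential inequality.

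It remains to show that $\bar u|_{\H}$ is a subsolution of $w_t+\HT(x,t,D_\H w)=0$ on $\H$, and this is where \hyp{\rm C}{4} intervenes, in a twofold manner. First I would prove the one-sided Hamiltonian stability $\liminf_*\HT^\eps\geq\HT$ on $T\H\times[0,T]$ (the $\liminf_*$ taken along $y\in\H$). Fix $(x,t)$, a tangent slope $p$ and $a=(\alpha_1,\alpha_2,\mu)\in A_0(x,t)$ nearly realizing $\HT(x,t,p)$. For $(y,s)\in\H$ near $(x,t)$ and $\eps$ small, $a$ fails the perturbed tangency $b_\H^\eps(y,s,\cdot)\cdot\mathbf{n}_1(y)=0$ only by a defect of order $|b^\eps-b|+|y-x|+|s-t|\to0$. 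Since \hyp{\rm C}{4} holds for the limit $b_i$, for $\eps$ small the reachable normal sets still contain an interval $[-\delta',\delta']$, which lets one absorb this defect by a vanishing correction of the control (adjusting $\mu$ when the normal signs are strict, otherwise adjusting $\alpha_1$ or $\alpha_2$ within the reachable normal interval), producing $a_\eps\in A_0^\eps(y,s)$ with $b_\H^\eps(y,s,a_\eps)\to b_\H(x,t,a)$ and $l_\H^\eps(y,s,a_\eps)\to l_\H(x,t,a)$; by uniform continuity the value is preserved, giving the lower bound. Combined with the subsolution-stability computation carried out on $\H$, this shows that $\bar w:=\limsup^*(u_\eps|_{\H})$ is a subsolution of the tangential equation.

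The second use of \hyp{\rm C}{4} is the trace identification $\bar u|_{\H}=\bar w$. As $\bar w\leq\bar u|_{\H}$ is automatic, it suffices to dominate $u_\eps$ near $\H$ by its values on $\H$. For $y\in\Omega_i$ close to $\H$, I would apply the sub-optimality principle \eqref{reverse:sub} of Lemma~\ref{reverse} to the subsolution $u_\eps$ of $u_t+H^\eps=0$ with the control given by \hyp{\rm C}{4} whose dynamic points towards $\H$ (i.e. $b_i^\eps\cdot\mathbf{n}_i\equiv\delta'>0$): the resulting trajectory hits $\H$ at a time $\tau_\eps=O(\dH(y))\to0$, so $u_\eps(y,s)\leq M_l\,\tau_\eps+u_\eps(\trajyali(\tau_\eps),s-\tau_\eps)$ with $\trajyali(\tau_\eps)\in\H$. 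Taking the relaxed upper limit along the sequence realizing $\bar u(x,t)$ gives $\bar u|_{\H}\leq\bar w$, hence equality, so $\bar u|_{\H}$ is a subsolution of the tangential equation and (ii) is complete. The genuine difficulty is exactly this tangential part: the admissible set $A_0^\eps$ of the constrained Hamiltonian $\HT^\eps$ moves with $\eps$, and the trace of $\bar u$ need not agree with the half-relaxed limit computed on $\H$; each failure is repaired by a distinct consequence of the normal controllability \hyp{\rm C}{4} — tangency restoration for the former, a quantitative reachability of $\H$ for the latter.
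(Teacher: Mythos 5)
Your proof is correct, but the route you take through the only delicate point --- the $\HT$-inequality for $\bar u=\limsup^* u_\eps$ on $\H$ --- is genuinely different from the paper's. The paper works with the test function directly: after flattening, it penalizes by $-K_\eps|x_N|$ with a rate $K_\eps\to\infty$ calibrated on the distance to $\H$ of the points realizing $\bar u$, and rules out maximum points in $\Omega_1\cup\Omega_2$ by the normal coercivity of $H_i^\eps$ (Lemma~\ref{coer:H}, the PDE incarnation of \hyp{\rm C}{4}); the surviving maximum points lie on $\H$ and the $\HT^\eps$-inequality passes to the limit via the locally uniform convergence $\HT^\eps\to\HT$ (Lemma~\ref{conv-HT}). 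You instead split the difficulty in two: (a) half-relaxed limits \emph{on the manifold} $\H$ together with the one-sided bound $\liminf_*\HT^\eps\geq\HT$ (which you prove by the same tangency-restoring control perturbation as Lemma~\ref{lemmacont}, i.e.\ only one direction of Lemma~\ref{conv-HT}) show that $\limsup^*(u_\eps|_{\H})$ is a subsolution of the tangential equation; (b) the trace identification $\bar u|_{\H}=\limsup^*(u_\eps|_{\H})$, obtained from the sub-optimality principle \eqref{reverse:sub} of Lemma~\ref{reverse} and a control with normal speed bounded below reaching $\H$ in time $O(\dH(y))$. Step (b) is a control-theoretic substitute for the paper's coercivity/penalization step --- it is in fact the same mechanism used in the proof of Theorem~\ref{teo:HJ} to push the approximating sequence onto $\H$ --- and both of your steps consume \hyp{\rm C}{4} exactly where the paper's counterexample $u_\eps=\sin(x_N/\eps)$ shows it is indispensable. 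What your version buys is a clean separation between Hamiltonian stability and the trace problem (and it makes transparent why nothing is needed for supersolutions); the price is the detour through the optimality principle, whereas the paper stays purely at the PDE level. Two details should be made explicit: \hyp{\rm C}{4} is only assumed for the limit dynamics, so one must record (as the paper does) that the locally uniform convergence transfers a normal reachability interval of size $\delta/2$ to the $b_i^\eps$ for $\eps$ small, uniformly in a neighborhood of $\H$ by the uniform Lipschitz bound; and the exit-time estimate $\tau_\eps=O(\dH(y_\eps))$ requires that lower bound on the normal speed along the whole trajectory, not merely at its starting point.
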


We point out the unusual form of this stability result : if for
supersolutions, the half-relaxed limit result holds true, it is not the
case anymore in general for the subsolution. This is related to the
$H_T$ inequality which sees only the subsolutions on $\H$. For exemple,
if $\H=\{x\in \R^N:\ x_N=0\}$ and if $u_\eps (x) = \sin(x_N/\eps)$, then
$\limsup^* u_\eps (x,0)\equiv 1$ on $\H$ while $u_\eps (x,0)\equiv 0$.
In this example it is clear that the $\limsup^* u_\eps $ comes from the
value of $u_\eps$ outside $\H$ and it is clear that one cannot recover
an $H_T$-inequality which sees only the values on $\H$. Assumption
\hyp{\rm C}{4} prevents these pathological situations to hold.

\begin{proof} 
This proof follows almost completely from standard  arguments  for
stability results on viscosity solutions (see, for instance \cite{Ba}):
we apply the standard stability results in $\R^N$ for the Hamiltonian
defined in the introduction, and in $\H$ for $\HT$. Since we can
flatten the boundary this last result is essentially a result in
$\R^{N-1}$.
   
The only case that need to be detailed is the proof of (ii) and more
precisely $\bar{u}$ fulfilling the inequality $u_t +\HT (x,t, Du) \leq
0$ on $\H$. To do so, we use the
\begin{lemma}\label{conv-HT}
Under the assumptions of Theorem~\ref{thm:main.stability} (ii), 
$\HT^\eps$ converges to $\HT$ locally uniformly.
\end{lemma}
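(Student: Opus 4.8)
The plan is to exploit that the only genuinely $\eps$-dependent difficulty sits in the \emph{constraint set} $A_0^\eps(x,t)=\{a=(\alpha_1,\alpha_2,\mu):b_\H^\eps(x,t,a)\cdot\mathbf{n}_1(x)=0\}$, since the objective itself converges harmlessly. Indeed, writing $F^\eps(x,t,p,a):=-\psg{b_\H^\eps(x,t,a)}{p}-l_\H^\eps(x,t,a)$ and $F$ for the limit, one has, for $(x,t,p)$ in a fixed compact set $K$ (so $|p|\le R$), the uniform bound $|F^\eps-F|\le (R+1)\rho_\eps$ over \emph{all} controls $a\in A$, where $\rho_\eps:=\|(b^\eps,l^\eps)-(b,l)\|_{L^\infty(K\times A)}\to0$ by hypothesis. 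Hence the whole task is to compare the two constrained suprema $\HT^\eps=\sup_{A_0^\eps}F^\eps$ and $\HT=\sup_{A_0}F$, i.e.\ to match the feasible sets. I would prove the two one-sided inequalities $\HT^\eps\ge\HT-m(\rho_\eps)$ and $\HT\le\HT^\eps+m(\rho_\eps)$ with a modulus $m$ independent of $(x,t,p)\in K$; choosing the perturbation parameters below as explicit functions of $\rho_\eps$ yields such a uniform modulus and therefore local uniform convergence directly (alternatively one first checks that the family $\{\HT^\eps\}$ is equi-continuous in $(x,t,p)$ via the uniform constants $M_b,L_b,m_l$ and reduces to pointwise convergence).

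For the first inequality, fix a near-maximizer $a=(\alpha_1,\alpha_2,\mu)\in A_0(x,t)$ and set $a_i:=b_i(x,t,\alpha_i)\cdot\mathbf{n}_1(x)$ and $a_i^\eps:=b_i^\eps(x,t,\alpha_i)\cdot\mathbf{n}_1(x)$. The feasibility constraint reads $\mu a_1+(1-\mu)a_2=0$, which forces $a_1a_2\le0$. In the \emph{transversal} situation, say $a_1>0>a_2$ strictly, I keep $\alpha_1,\alpha_2$ unchanged and merely readjust the weight by $\mu^\eps:=-a_2^\eps/(a_1^\eps-a_2^\eps)$; a one-line check gives $(\alpha_1,\alpha_2,\mu^\eps)\in A_0^\eps(x,t)$, and since $a_i^\eps\to a_i$ with $a_1-a_2$ bounded away from $0$, one has $\mu^\eps\to\mu\in(0,1)$ so that $F^\eps$ at this point converges to $F$ at $a$. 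This already yields $\HT^\eps\ge\HT-o(1)$ \emph{whenever the near-maximizer can be taken transversal}.

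The main obstacle is therefore the \emph{degenerate/tangent} configuration $a_1=0$ or $a_2=0$ (the worst being $a_1=a_2=0$, both dynamics tangent to $\H$), where the constraint $b_\H^\eps\cdot\mathbf{n}_1=0$ is non-transversal and $A_0^\eps(x,t)$ may collapse or jump under the perturbation — exactly the pathology ruled out after Theorem~\ref{thm:main.stability}. This is where \hyp{\rm C}{4} enters. By normal controllability together with \hyp{\rm C}{3}, for each $i$ there is a control $\bar\alpha_i$ with $b_i(x,t,\bar\alpha_i)\cdot\mathbf{n}_i(x)=\delta$, and by convexity of $\{(b_i,l_i)\}$ one can form, for small $\lambda_i\in[0,1]$, genuine controls $\alpha_i^{\lambda_i}$ realizing $(1-\lambda_i)(b_i,l_i)(\alpha_i)+\lambda_i(b_i,l_i)(\bar\alpha_i)$; these shift the normal components to $a_1^{\lambda_1}=(1-\lambda_1)a_1+\lambda_1\delta$ and $a_2^{\lambda_2}=(1-\lambda_2)a_2-\lambda_2\delta$ while changing the objective by $O(\lambda_1+\lambda_2)$ only. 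Since, in the tangent case, the objective is affine in $\mu$ so its supremum is attained at an endpoint $\mu\in\{0,1\}$, I would choose $\lambda_1,\lambda_2$ \emph{asymmetrically and matched to that endpoint} (for instance $\lambda_1\ll\lambda_2$ when the optimal weight is $\mu=1$, so that $\mu^\lambda=\lambda_2/(\lambda_1+\lambda_2)\to1$): then $a_1^{\lambda_1}>0>a_2^{\lambda_2}$ is strictly transversal, the recomputed weight stays near the optimal one, and the objective loss is $O(\lambda_1+\lambda_2)$. This produces a strictly transversal near-maximizer and reduces the degenerate case to the transversal one above.

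The reverse inequality $\HT\le\HT^\eps+m(\rho_\eps)$ is entirely symmetric: start from a near-maximizer in $A_0^\eps(x,t)$, render it transversal, and transfer it into $A_0(x,t)$ by readjusting $\mu$. The only extra point is that this direction invokes controllability for $b_i^\eps$; but since $\mathbf{B}_i^\eps(x,t)\cdot\mathbf{n}_i(x)$ is an interval (by \hyp{\rm C}{3}) whose endpoints converge to those of $\mathbf{B}_i(x,t)\cdot\mathbf{n}_i(x)$, the \emph{limit} assumption \hyp{\rm C}{4} forces $\mathbf{B}_i^\eps(x,t)\cdot\mathbf{n}_i(x)\supset[-\delta/2,\delta/2]$ for $\eps$ small, i.e.\ the approximate controllability of $b^\eps$ that the argument needs. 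Tracking $\lambda_1,\lambda_2$ and the feasibility thresholds as functions of $\rho_\eps$, uniformly over $K$ (using the uniform constants and the $W^{2,\infty}$ bound on $\mathbf{n}_1$), gives the uniform modulus $m$, and the lemma follows.
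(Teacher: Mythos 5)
Your proof is correct, and its engine is the same as the paper's: assumptions \hyp{\rm C}{3} and \hyp{\rm C}{4} are used to repair the feasibility constraint $b_\H\cdot\mathbf{n}_1=0$ after a perturbation of the data, at a cost in the objective of order $(\text{perturbation})/\delta$. The packaging differs in two ways. First, the paper argues sequentially: it takes $(x_\eps,t_\eps,p_\eps)\to(x,t,p)$, extracts a convergent subsequence of maximizers $a_\eps$ to get one inequality, and transfers the limit maximizer $\hat a$ into the $\eps$-feasible set for the other; you instead prove two one-sided bounds with an explicit uniform modulus $m(\rho_\eps)$. Second, and more substantively, the paper's transfer mechanism (Lemma~\ref{lemmacont}) mixes the \emph{composite} control $(b_\H,l_\H)(a)$ with a single auxiliary control whose normal component equals $\mp\delta$; the mixing weight is then automatically proportional to the constraint violation, so no case distinction is needed. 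You instead perturb each $(b_i,l_i)(\alpha_i)$ separately and recompute $\mu$, which forces you to treat the tangent configurations $b_i\cdot\mathbf{n}_i=0$ by hand (your asymmetric choice of $\lambda_1,\lambda_2$ does handle them, including --- after the harmless replacement of the irrelevant $\alpha_2$ when the constraint forces $\mu=1$ --- the configuration $a_1=0$, $a_2>0$, where your formula $a_2^{\lambda_2}=(1-\lambda_2)a_2-\lambda_2\delta$ would not become negative for small $\lambda_2$). What your route buys is a quantitative modulus and, importantly, an explicit justification of a point the paper leaves implicit: since \hyp{\rm C}{4} is only assumed for the limit dynamics, the reverse inequality needs approximate normal controllability of the $b_i^\eps$, which you correctly extract from the convexity of $\mathbf{B}_i^\eps(x,t)\cdot\mathbf{n}_i(x)$ (a consequence of \hyp{\rm C}{3}) together with the convergence of its endpoints. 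If you want to shorten the argument, adopt the paper's composite mixing: it collapses your transversal/degenerate dichotomy into a single computation.
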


We postpone the proof and return to the proof of Theorem~\ref{thm:main.stability} (ii). We first remark that, thanks to \hyp{\Omega}{}, we can argue
as  in the proof of uniqueness and suppose that we are working with
$\H=\{x_N=0\}$ (see assumption \hyp{\Omega}{x_0} and its consequences).

If $\phi\in C^1(\H\times[0,T])$ and if $(x'_0,t_0)$ is a strict local maximum point  of $ \bar{u}(y',0,s)-\phi(y',s)$ in $\H\times[0,T]$, our aim is to prove that
\begin{equation} \label{tesistab}
	\phi_t(x'_0,t_0)+\HT\big((x'_0,0),t_0,D_\H\phi(x'_0,t_0)\big)\leq0\;.
   \end{equation}
By the definition of $ \limsup^* u_\eps$, there exists a sequence $(\bar x_\eps, \bar t_\eps)$  converging to $ (x'_0,0,t_0)$ such that $\bar u (x'_0,0,t_0)=\lim_\eps u_\eps (\bar x_\eps, \bar t_\eps)$. If $(\bar x_\eps)_N \neq 0$, we set $K_\eps = |(\bar x_\eps)_N|^{-1/2}$, otherwise $K_\eps =\eps^{-1}$. Notice that $K_\eps \to +\infty $ as $\eps \to 0$.

We consider the function $\psi_\eps (x,t):=u_\eps(x,s)-\phi(x',s)- K_\eps 
|x_N|$. By classical techniques, using that $\psi_\eps (\bar x_\eps, \bar 
t_\eps) \to \bar{u}(x',0,t_0)-\phi(x',t_0)$ (this key property justifies the 
choice of $K_\eps$), one proves easily that there exists a sequence 
$(x_\eps,t_\eps)$ of maximum points of $\psi_\eps$ which converges to $ 
(x'_0,0,t_0)$.

If $x_\eps   \in \Omega_1\subset \{x\in \R^N:\ x_N>0\}$, $x\mapsto
|x_N|$ is smooth in a neighborhood of $x_\eps$ and, since $u_\eps$ is an
usc subsolution of \eqref{eqHHn}, we have $$\phi_t(x'_\eps, t_\eps)+
H_1^\eps(x_\eps,t_\e, D_\H\phi(x'_\eps, t_\eps) +K_\eps \mathbf{e}_N) \leq 0 $$
but, recalling that $K_\eps \to +\infty $ as $\eps \to 0$, this
inequality cannot hold for $\eps$ small enough because of 
\hyp{\rm C}{4}. To be more precise, since the $b_i^\eps$ converge locally uniformly to $b_i$
which statisfy \hyp{\rm C}{4}, we can take a uniform $\delta=\tilde\delta$ in  
Lemma~\ref{coer:H} which proves the claim.  

In the same way $x_\eps$
cannot be in $\Omega_2$. As a consequence,  $x_\eps$ is on $\H$ and is a
maximum point of $(y',s)\mapsto u_\eps(y',0,s)-\phi(y',s)$. But $u_\eps$
is an usc subsolution of \eqref{eqHHn}, therefore the
$\HT^\eps$-inequality holds and we conclude in the classical way using
Lemma~\ref{conv-HT}.
\end{proof} 

{\it Proof  of Lemma~\ref{conv-HT}}. By the definition of $\HT^\eps$, 
$$ \HT^\eps (x,t,p):=\sup_{A_0(x,t)} \big\{  - \psg{
    b^\eps_\H(x,t,\a)}{p}   -  l^\eps_\H(x,t,\a)   \big\}.$$
If $x \in \H$, $t\in (0,T)$ and if $(x_\eps,t_\eps)_\eps$ is a sequence in $\H\times (0,T)$ converging to $(x,t)$ and if $p_\eps\to p$, we use this definition to write
\begin{equation}\label{ineq:convHT}
 \HT^\eps (x_\eps,t_\eps,p_\eps)=  - \psg{
    b^\eps_\H(x_\eps,t_\eps,\a_\eps)}{p_\eps}   -  l^\eps_\H(x_\eps,t_\eps,\a_\eps) \geq - \psg{
    b^\eps_\H(x_\eps,t_\eps,\a)}{p_\eps}   -  l^\eps_\H(x_\eps,t_\eps,\a)   
    \end{equation}
 for any $\a \in A_0(x_\eps,t_\eps)$.
 
Again by definition, we have
 $$b^\eps_\H(x_\eps,t_\eps,\a_\eps)=\mu_\eps
    b_1 (x_\eps,t_\eps,\alpha^\eps_1) + (1-\mu_\eps)b_2(x_\eps,t_\eps,\alpha^\eps_2)\,,$$
and extracting subsequences, we can assume that $b^\eps_\H(x_\eps,t_\eps,\a_\eps)$ converges to $b_\H(x,t,\bar \a)$. In the same way, $l^\eps_\H(x_\eps,t_\eps,\a)  \to l_\H (x,t,\bar \a)$. It remains to show that  
$$ \HT (x,t,p)=- \psg{
    b_\H(x,t,\bar \a)}{p}   -  l_\H(x,t,\bar \a)\; .$$
This can be done using Inequality \eqref{ineq:convHT} and the arguments of Lemma~\ref{lemHTlip} : if
$$\HT (x,t,p)=- \psg{
    b_\H(x,t,\hat  a)}{p}   -  l_\H(x,t, \hat a)\; ,$$
we can build a sequence $\tilde{\a}_\eps\in A_0(x_\eps,t_\eps)$ such that 
$$- \psg{
    b^\eps_\H(x_\eps,t_\eps,\tilde{\a}_\eps)}{p_\eps}   -  l^\eps_\H(x_\eps,t_\eps,\tilde{\a}_\eps) \to 
       - \psg{
    b_\H(x,t,\hat \a)}{p}   -  l_\H(x,t,\hat \a)\; .$$
Passing to the limit in the inequality \eqref{ineq:convHT} with $\a=\tilde{\a}_\eps$, we have the desired conclusion.
$\Box$

We now turn to the stability of the minimal and maximal solutions.
To do so, we denote by $\Taeps$ [resp. $\Tregeps$] the set 
of admissible [resp. admissible and regular] trajectories associated to 
the dynamics $b_{i}^{\eps}\,,\ i=1,2$. We also define the costs 
functionals $J^{\eps}$ as in \eqref{def:cost}, but with $\ell^{\eps}$ and 
$g^{\eps}$.

\begin{lemma}\label{lem:limit.adm.reg}Under the assumptions of Theorem \ref{thm:main.stability}, if 
for any $\eps>0$, $(X^{\eps},a^{\eps})\in\Taeps$, the following holds
	\begin{itemize}
		\item[i)] There exists a subsequence 
		$(X^{\eps_{n}},a^{\eps_{n}})_{n}$ converging to an admissible 
		trajectory $(X,a)\in \Ta$. More precisely, $X^{\eps_{n}}\to 
		X$ uniformly in $[0,T]$  and
		$$J(x,t;(X^{\eps_{n}},a^{\eps_{n}}))\to J(x,t,(X,a)) 
		\quad\text{ uniformly in }[0,T]\,.$$
		\item[ii)] If, moreover, 
		$(X^{\eps},a^{\eps})\in\Tregeps$ for any $\eps>0$ 
		(i.e., the trajectories are regular), then we have a subsequence for which the limit 
		trajectory is also regular: $(X,a)\in \Treg$. 
		 \item[iii)] The results in  i) (and ii) ) hold true also  if we assume that for 
		each  $\eps>0$,  the trajectories $(X^{\eps},a^{\eps})\in \Taineps ( \in \Tregineps)$, 
		and we assume that $(x_\eps,t_\eps) \rightarrow (x,t)$ as $\eps \rightarrow 0$.
		
	\end{itemize}
\end{lemma}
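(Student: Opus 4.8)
The plan is to treat (i) by a compactness argument for differential inclusions, augmented so as to capture the running cost simultaneously, to upgrade to the regular case (ii) through a closedness property of the regular velocity field, and to obtain (iii) by inspection.

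First I would establish the compactness in (i). Since $|b_i^\eps|\le M_b$ uniformly in $\eps$ by \hyp{\rm C}{1}, every $X^\eps$ is $M_b$-Lipschitz on $[0,t]$ with $X^\eps(0)=x$, so by Ascoli--Arzel\`a a subsequence converges uniformly on $[0,T]$ to an $M_b$-Lipschitz limit $X$. To get convergence of the costs at the same time, I would work in $\R^{N+1}$ with the augmented trajectory $(X^\eps,L^\eps)$, where $L^\eps(s):=\int_0^s\ell^\eps\big(X^\eps(\sigma),t-\sigma,a^\eps(\sigma)\big)\,\mathrm{d}\sigma$; this solves a differential inclusion $(\dot X^\eps,\dot L^\eps)\in\tilde\mB^\eps(X^\eps(s),t-s)$ whose velocity set equals $\{(b_i^\eps,l_i^\eps)(z,t-s,\alpha_i):\alpha_i\in A_i\}$ on $\Omega_i$ and its $\cob$ on $\H$. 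By \hyp{\rm C}{3} these augmented sets are closed and convex, and the structural choice of $\cob$ on $\H$ makes $\tilde\mB^\eps$ upper semicontinuous, exactly as in Theorem~\ref{def:dyn}.

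I would then pass to the limit by the convergence theorem for differential inclusions (see \cite{AF}): since $b_i^\eps\to b_i$ and $l_i^\eps\to l_i$ locally uniformly, the graphs of $\tilde\mB^\eps$ converge in the upper Kuratowski sense to the graph of the limiting map $\tilde\mB$, while the uniformly bounded derivatives $(\dot X^\eps,\dot L^\eps)$ converge weakly-$*$ in $L^\infty$; Mazur's lemma together with the closedness and convexity of $\tilde\mB$ yields $(\dot X,\dot L)\in\tilde\mB(X(s),t-s)$ a.e. Hence $X\in\Ta$ is admissible and, by Theorem~\ref{def:dyn}(ii), is represented by some $a\in\Ac$; moreover $L(s)=\int_0^s\ell(X(\sigma),t-\sigma,a(\sigma))\,\mathrm{d}\sigma$, so that $J(x,t;(X^{\eps_n},a^{\eps_n}))=L^{\eps_n}(\cdot)+g^{\eps_n}(X^{\eps_n}(\cdot))\to L(\cdot)+g(X(\cdot))=J(x,t;(X,a))$ uniformly on $[0,T]$, using $g^\eps\to g$ locally uniformly. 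This proves (i).

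For (ii) the point is to choose the limit control regular, i.e. $a(s)\in\Aoreg(X(s),t-s)$ for a.e. $s\in\E_\H$. I would first record that the regular velocity field $\mathcal R(z,s):=\{b_\H(z,s,a):a\in\Aoreg(z,s)\}\subset T_z\H$ has closed graph over $\H\times[0,T]$: this follows from compactness of the $A_i$, continuity of $b_i,l_i$ and \hyp{\rm C}{3}, the constraints $b_i\cdot\mathbf{n}_i\ge0$ and tangency being closed. The main obstacle is that regularity of $X^\eps$ constrains $a^\eps$ only on $\{X^\eps\in\H\}$, a set not comparable with the limiting $\E_\H=\{X\in\H\}$, since the $X^\eps$ may make small excursions into $\Omega_i$ where no regularity is imposed. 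I would resolve this via a Young-measure representation of the limiting velocity on $\E_\H$: along a subsequence the controls generate a Young measure $\nu_s$ on $A$ with $\dot X(s)=\int_A b_\H(X(s),t-s,\cdot)\,\mathrm{d}\nu_s$ for a.e. $s\in\E_\H$. The normal controllability \hyp{\rm C}{4}, together with the regular constraint carried by the $a^\eps$ on $\{X^\eps\in\H\}$, forces the barycentre $\dot X(s)$ to lie in $\mathcal R(X(s),t-s)$; the closedness of the graph of $\mathcal R$ then provides, by a measurable selection theorem, a regular selection $a(s)\in\Aoreg(X(s),t-s)$, so that $(X,a)\in\Treg$. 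Finally, (iii) needs no new idea: replacing $X^\eps(0)=x$ by $X^\eps(0)=x_\eps$ with $x_\eps\to x$ alters neither the uniform Lipschitz bound nor the Ascoli argument, and the augmented limit still issues from $(x,0)$, so (i) and (ii) carry over. The hardest step is precisely the regularity of the limit in (ii): showing that the barycentre of the limiting Young measure is regular-representable despite the off-interface excursions of the $X^\eps$ is the delicate point announced in the introduction, and it is exactly where \hyp{\rm C}{4} is essential.
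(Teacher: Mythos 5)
Your parts (i) and (iii) are fine and essentially coincide with the paper's argument: the augmented trajectory $(X^\eps,L^\eps)$, the uniform Lipschitz bound, Ascoli--Arzel\`a, the convexity/closedness from \hyp{\rm C}{3} to pass to the limit in the differential inclusion, and a measurable selection to recover the control are exactly what the paper does (it adds the variable $\sigma(s)=t-s$ to the augmented state, a cosmetic difference).

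Part (ii), however, contains a genuine gap: the sentence ``the normal controllability \hyp{\rm C}{4}, together with the regular constraint carried by the $a^\eps$ on $\{X^\eps\in\H\}$, forces the barycentre $\dot X(s)$ to lie in $\mathcal R(X(s),t-s)$'' is precisely the statement to be proved, and you assert it without an argument. You correctly identify the obstacle (on $\{X\in\H\}$ the limit velocity is a mixture of contributions from $\{X^\eps\in\Omega_1\}$, $\{X^\eps\in\Omega_2\}$ and $\{X^\eps\in\H\}$, and the first two carry no regularity constraint on the controls), but closedness of the graph of $\mathcal R$ plus a measurable selection cannot bridge it. What is needed, and what the paper supplies, is: (a) a proof that the $\Omega_i$-excursion pieces contribute only \emph{tangential} dynamics in the limit on $\{X\in\H\}$ --- this is obtained by differentiating $s\mapsto d_{\Omega_i}(X^\eps(s))$ (Stampacchia's theorem) and passing to the weak limit, giving $b_i\big(X(s),\sigma(s),\alpha_i^{\sharp}(s)\big)\cdot\mathbf{n}_i(X(s))\,\pi_i(s)=0$ a.e.\ on $\{X\in\H\}$; (b) a proof that the $\H$-pieces, whose controls are regular at the \emph{nearby} points $X^\eps(s)$, contribute in the limit dynamics lying in the convex set $K(X(s),t-s)$ of regular dynamics at the \emph{limit} point --- this uses the stability of regular controls under small perturbations of the base point (Remark~\ref{remHTreg-regolare}), which itself rests on \hyp{\rm C}{3}--\hyp{\rm C}{4}; and (c) a mechanism to conclude from (a)+(b) that the barycentre is in $K$: the paper argues by contradiction on the set $\Esing$ where the limit dynamic is $\eta$-separated from $K$, builds a measurable family of separating affine functionals $\Psi_s$, and evaluates $\int_0^t\Psi_s(\dot X^\eps(s))\mathds{1}_{\Esing}(s)\ds$ in two ways, using the convexity of $K$ and the fact that the three limit measures have total mass one. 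Note also that the mere weak-$*$ convergence of $\dot X^\eps$ does not directly give a Young-measure identity of the form $\dot X(s)=\int_A b_\H(X(s),t-s,\cdot)\,\mathrm{d}\nu_s$ restricted to $\E_\H$ without the disintegration into the three families of measures $\nu_1,\nu_2,\nu_\H$; you would need to set this up explicitly before even stating the claim whose proof is missing.
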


\begin{proof} 
    The proof of $i)$ is almost standard and we only provide it for the 
    reader's convenience. On the contrary, the proof of $ii)$ reveals 
    unexpected difficulties (but which come from the particular features of the 
    control problem).
    
	\noindent\textsc{Proof of $i)$ ---} Since we want to pass to the 
	limit both on the dynamic and the cost, we rewrite the 
	differential inclusion in a different way, taking into account 
	both at the same time. 
	
	We fix $(x,t)$. Since the trajectories go backward in time,
	we introduce the variable $\sigma(s):=t-s$, starting at $\sigma(0)=t$.{}
	Then, for any $\eps>0$, using the admissible trajectory 
	$(X^{\eps},a^{\eps})$ we set 
	$$Y^{\eps}(s):=\int_{0}^{s}\ell^{\eps}\big(X^{\eps}(\tau), 
	\sigma(\tau),a^{\eps}(\tau)\big)\dtau$$
	where the Lagrangian $\ell^{\eps}$ is defined as in 
	\eqref{def:lagrangian}, but	with $l_{1}^{\eps},l_{2}^{\eps}$. In order to 
	take into acount both $X^{\eps}$ and $Y^{\eps}$ at the same time and the 
	function $\sigma(\cdot)$, we consider the mixed variable 
	$Z:=(X,Y,\sigma)\in\R^{N}\times\R\times[0,T]$, and translate the 
	differential inclusion in terms of $Z$.
		
	To do so, we use \hyp{\rm C}{3} and introduce, for 
	$i=1,2$, the sets 
	\begin{align*} 
		\mathbf{BL}_i^\eps (Z)  &:= \big\{
	    \big(b_i^\eps (X,\sigma,\alpha_i),l_i^\eps (X,\sigma,\alpha_i), -1\big) 
	    : \alpha_i \in A_i \big\}\;,\\[2mm]
	\mathcal{BL}^\eps (Z) &:= 
	\begin{cases} \mathbf{BL}_i^\eps (Z) & \text{ if } X\in\Omega_i\,,\\
	\cob \big( \mathbf{BL}_1^\eps (Z) \cup \mathbf{BL}_2^\eps (Z) 
	\big) &  \text{ if } X\in\H\,. 
    \end{cases}
	\end{align*}
	It turns out that the triple
	$Z^{\eps}:=(X^\eps,Y^\eps,\sigma)$ is a solution of 
	the differential inclusion 
	$$ \dot Z^{\eps}(s) \in \mathcal{BL}^\eps \big(Z^\eps (s)\big)\quad 
	\hbox{for a.e. }s \in [0,t) \: , \quad\text{with } Z^\eps (0)=(x,0,t)\;.$$
	 We first notice that since the $b_{i}^{\eps},l_{i}^{\eps}$ are 
	uniformly bounded, the $Z^{\eps}$ are equi-Lipschitz and equi-bounded on 
	$[0,T]$. Therefore we can extract a subsequence (denoted by $Z^{\eps_{n}}$) which converges 
	uniformly on $[0,T]$ to some $Z=(X,Y,\sigma)$. Moreover, for any 
	given $\delta>0$ and for $\eps>0$ small enough, we have, for any 
	$s\in(0,t)$
	$$\mathcal{BL}^{\eps_{n}}(Z^{\eps_{n}})\subset\mathcal{BL}(Z)+\delta 
	B_{N+2}\,,$$
	where $B_{N+2}$ is the unit ball in $\R^{N+2}$, centered at the origin. Using this information, it is immediate that $\dot Z(s)\in\mathcal{BL}\big(Z(s)\big)$ almost everywhere. In particular the limit trajectory is admissible: there exists a 
	control $a(\cdot)$ such that $\big(X,a)\in\Ta$. (See  Filippov's Lemma \cite[Theorem 8.2.1]{AF} or the proof of  Theorem 2.1 in 
	\cite{BBC1}).
	
	We deduce also that necessarily,
	$$Y^{\eps_{n}}(s)\to Y(s)= 
	\int_{0}^{s}\ell\big(X(\tau),\sigma(\tau),a(\tau)\big)\dtau \quad\text{ 
	uniformly in }[0,t]\,.$$ 
	Finally, since $g^{\eps}\to g$ locally uniformly in $\R^{N}$ and 
	$X^{\eps_{n}}\to X$ uniformly on $[0,T]$, we deduce that $J(x,t;(X^{\eps_{n}},a^{\eps_{n}}))$ converges to $J(x,t,(X,a))$ 
	 uniformly  with respect to $t\in[0,T]$.
	
	\noindent\textsc{Proof of $ii)$ ---}
	The difficulty comes from two facts: the first one is that we have to deal 
	with weak convergences in the $b_i^{\eps}, b_\H^{\eps}$-terms but the problem 
	is increased by the fact that some pieces of the trajectory $X(\cdot)$ on $\H$ 
	can be obtained as limits of trajectories $X^{\eps}(\cdot)$ which lie either on 
	$\H$, $\Omega_{1}$ or $\Omega_{2}$. In other words, the indicator functions 
	$\mathds{1}_{\{X^{\eps}\in\H\}}(\cdot)$ do not converge to 
	$\mathds{1}_{\{X\in\H\}}(\cdot)$, and similarly the
	$\mathds{1}_{\{X^{\eps}\in\Omega_{i}\}}(\cdot)$ do not converge to 
	$\mathds{1}_{\{X\in\Omega_{i}\}}(\cdot)$. We proceed in three steps.

	\noindent \textbf{Step 1.} We first recall that
	$$
		 \dot X^{\eps}(s) =  \sum_{i=1,2} b_i^{\eps}\big(X^{\eps}(s), 
		 \sigma(s),\alpha_i^{\eps}(s)\big)    
		 \mathds{1}_{\{X^{\eps}\in\Omega_{i}\}}(s)+ b_\H^{\eps} 
		 \big(X^{\eps}(s),\sigma(s),a^{\eps}(s)\big){}
		 \mathds{1}_{\{X^{\eps}\in\H\}}(s)
	$$
	converges weakly ($i.e.$ in $L^{\infty}(0,T)$ weak--$\ast$) to
	\begin{equation}\label{eq:conv.bh}
		 \dot X(s) =  \sum_{i=1,2} b_i \big(X(s), 
		 \sigma(s),\alpha_i(s)\big)    
		 \mathds{1}_{\{X\in\Omega_{i}\}}(s)+ b_\H 
		 \big(X(s),\sigma(s),a(s)\big){}
		 \mathds{1}_{\{X\in\H\}}(s)\,,
	\end{equation}
	for some control $a(\cdot)$ such that $(X,a)\in\Ta$.
	This weak convergence does not create any difficulty if $X(s)$ is in 
	$\Omega_i$ for $i=1,2$ but it is a little bit more complicated if $X(s)\in \H$ 
	since the term $b_\H \big(X(s),\sigma(s),a(s)\big)\mathds{1}_{\{X\in\H\}}(s)$ 
	is a weak limit of
	$$  \sum_{i=1,2} b_i^{\eps}\big(X^{\eps}(s), 
		 \sigma(s),\alpha_i^{\eps}(s)\big)    
		 \mathds{1}_{\{X^{\eps}\in\Omega_{i}\}}(s)\mathds{1}_{\{X\in\H\}}(s)+ b_\H^{\eps} 
		 \big(X^{\eps}(s),\sigma(s),a^{\eps}(s)\big){}
		 \mathds{1}_{\{X^{\eps}\in\H\}}(s)\mathds{1}_{\{X\in\H\}}(s)\;,
	$$
	and we have to check that both terms cannot generate singular strategies. 
	In order to examine carefully the mechanism of the weak convergence on 
	$\H$, we write, for $0\leq \tau \leq t$
	$$
		 X^{\eps}(\tau)-x =  \sum_{i=1,2}\int_{0}^{\tau}b_i^{\eps}\big(X^{\eps}(s), 
		 \sigma(s),\alpha_i^{\eps}(s)\big)    
		 \mathds{1}_{\{X^{\eps}\in\Omega_{i}\}}(s)\ds+ \int_{0}^{\tau}b_\H^{\eps} 
		 \big(X^{\eps}(s),\sigma(s),a^{\eps}(s)\big){}
		 \mathds{1}_{\{X^{\eps}\in\H\}}(s)\ds\,,
	$$
	and we use a slight modification of the procedure leading to relaxed control 
	as follows. We write
	$$
	\int_{0}^{\tau}b_1^{\eps}\big(X^{\eps}(s),\sigma(s),\alpha_1^{\eps}(s)\big) 
		 \mathds{1}_{\{X^{\eps}\in\Omega_{1}\}}(s)\ds=
		 \int_{0}^{\tau}\int_{A_{1}}b_1^{\eps} 
		 \big(X^{\eps}(s),\sigma(s),\alpha_1\big)\,
		 \nu_{1}^{\eps}(s,\dalpha_{1})\ds\,,
	$$
	where $\nu_{1}^{\eps}(s,\cdot)$ stands for the measure defined on 
	$A_{1}$ by	$\nu_{1}^{\eps}(s,E)=\delta_{\alpha_{1}^{\eps}}(E)
	\mathds{1}_{\{X^{\eps}\in\Omega_{1}\}}(s)$, for any Borelian set 
	$E\subset A_{1}$. Similarly we define $\nu_{2}^{\eps}$ and 
	$\nu_{\H}^{\eps}$ for the other terms. Notice that $\nu_{\H}^{\eps}$ is 
	a bit more complex measure since it concerns controls of the form 
	$a=(\alpha_{1},\alpha_{2},\mu)$ on $A$, but it works as for $\nu_{1}^\eps$ so we omit 
	the details. 
	
Note that, for any $s$, $\nu^{\eps}_{1}(s,A_1)+\nu^{\eps}_{2}(s,A_2)+\nu^{\eps}_{\H}(s,A)=1$ and therefore the measures $\nu_{1}^{\eps}(s,\cdot), \nu_{2}^{\eps}(s,\cdot), \nu_{\H}^{\eps}(s,\cdot)$ are uniformly bounded in $\eps$. Up to successive extractions of subsequences, they all converge weakly to some measures $\nu_{1}$, $\nu_{2}$, $\nu_{\H}$. Since the total mass is $1$, we obtain in the limit $\nu_{1}(s,A_1)+\nu_{2}(s,A_2)+\nu_{\H}(s,A)=1$.  
	Using that (also up to extraction form the proof of $i$) above), 
	$X^{\eps}$ converges uniformly on $[0,t]$ and the local uniform 
	convergence of the $b_{i}^{\eps}$, we get that
	$$
		\int_{A_{1}}b_1^{\eps} 
		 \big(X^{\eps}(s),\sigma(s),\alpha_1\big)\,
		 \nu_{1}^{\eps}(s,\dalpha_{1})
		 \ \mathop{\longrightarrow}_{\eps\to 0} \ 
		\int_{A_{1}}b_1 
		 \big(X(s),\sigma(s),\alpha_1\big)\,
		 \nu_{1}(s,\dalpha_{1}),\;\hbox{weakly in  }L^\infty(0,T)\;.
	$$
  Introducing $\pi_{1}(s):= \int_{A_1}  \nu_{1}(s,\dalpha_{1})$ and using the convexity of $A_{1}$ to gether with measurable selection argument (see \cite[Theorem 8.1.3]{AF}), the last integral can be written as 
	$b_{1}\big(X(s),\sigma(s),\alpha_{1}^{\sharp}(s)\big)\pi_{1}(s)$ for some control
	$\alpha^{\sharp}_{1}\in L^{\infty}(0,T;A_{1})$. 
	The same procedure for the 
	other two  terms provides the controls $\alpha^{\sharp}_{2}(\cdot)$, 
	$a^{\sharp}(\cdot)$ and functions $\pi_{2}(\cdot),\ \pi_{\H}(\cdot)$.
	In principle, those controls can be different from 
	$\alpha_{1}(\cdot)$, $\alpha_{2}(\cdot)$ and $a(\cdot)$ but this will not be 
	a problem since $\alpha^{\sharp}_{1}(\cdot)$, $\alpha^{\sharp}_{2}(\cdot)$, 
	$a^{\sharp}(\cdot)$ are just intermediate controls which are used to prove that the strategy $a(\cdot)$ is regular. 
	
	\noindent \textbf{Step 2.}
	We then deal with the $b_i$-terms. If $d_{\Omega_i}(x)$ denotes the distance from 
	$x$ to $\Omega_i$ then $d_{\Omega_i}(X^{\eps})$ is a sequence of Lipschitz 
	continuous functions which converges uniformly to $d_{\Omega_i}(X)$ and, up to 
	 an additional extraction of subsequence, we may assume that  the derivatives converges 
	weakly in $L^\infty$ (weak--$*$ convergence). As a consequence,
	$\frac{d}{ds}\big[d_{\Omega_i}(X^{\eps})  \big]\mathds{1}_{\{X\in\H\}}$ converges 
	weakly to 
	$\frac{d}{ds}\big[d_{\Omega_i}(X)  \big]\mathds{1}_{\{X\in\H\}}$.
	
	In order to use this convergence we have to compute $\frac{d}{ds}\big[d_{\Omega_i}(X^{\eps})  \big]$.   Using the extension of 
	$\mathbf{n}_{i}$ outside $\H$ in such a way that  $Dd_{\Omega_i}(x)=-\mathbf{n}_{i}(x)\mathds{1}_{\{ x \in\Omega_j\}}$, together with 
	 the regularity of  
	$\Omega_i$  and Stampacchia's  Theorem  we have 
	$$
	\frac{d}{ds}\big[d_{\Omega_i}(X^{\eps})  \big]=\dot X^{\eps}(s)\cdot\mathbf{n}_{i}(X^{\eps}(s))
		\mathds{1}_{\{X^{\eps}\in\Omega_j\}}(s)  \quad \mbox{ for almost all } s \in (0,T).
	$$
Indeed,  on one hand, the distance function is regular outside $\H$ while, on the other hand, $\dot X^{\eps}(s)\cdot\mathbf{n}_{i}(X^{\eps}(s))=0$ a.e. on $\H$. 
	Therefore the above convergence reads, for $i\neq j$, 
	$$ 
		\dot X^{\eps}(s)\cdot\mathbf{n}_{i}(X^{\eps}(s))
		\mathds{1}_{\{X^{\eps}\in\Omega_j\}}(s)\mathds{1}_{\{X\in\H\}}(s)	
		\longrightarrow
		\dot X (s)\cdot \mathbf{n}_{i}(X(s))\mathds{1}_{\{X\in\Omega_j\}}(s) 
		\mathds{1}_{\{X\in\H\}}(s)=0
	$$
	in $L^{\infty}(0,T)$ weak--$*$, or equivalently using the above expression of  
	$\dot X^{\eps}(s)$, 
	$$
		b_j^{\eps}\big(X^{\eps}(s), 
		\sigma(s),\alpha_j^{\eps}(s)\big)\cdot \mathbf{n}_{j}(X^{\eps}(s))\mathds{1}_{\{X^{\eps}\in\Omega_j\}}(s)
		\mathds{1}_{\{X\in\H\}}(s) \longrightarrow 0\quad\text{in } L^{\infty}(0,T)\text{ 
		weak--}*\;.
	$$
	This implies that for $i=1,2$
	\begin{equation}\label{eq:regu.dyn}
		b_i \big(X(s),\sigma(s),\alpha_i^{\sharp}(s)\big)\cdot\mathbf{n}_{i}(X(s)) \,\pi_{i}(s)=0\; \hbox{ a.e. on  }\{X(s)\in \H\}\;,
	\end{equation}
	which means that, in these terms, the involved dynamics are regular since they 
	are tangential (provided we take the $\alpha_{i}^{\sharp}$ as controls). 
	
	\noindent \textbf{Step 3.}
	We are now ready to prove that $(X,a)\in\Treg$, 
	$i.e.$ the dynamic in the $b_{\H}$-term of \eqref{eq:conv.bh} is regular.
	To do so, we introduce the convex set of regular dynamics 
	for $z\in\H$ and $0\leq s \leq t$ that we denote by  
	$$K(z,s):=\big\{b_{\H}\big(z,s,a_{*}\big)\,, 
	a_{*}\in \Aoreg(z,s)\big\}\subset\R^{N}\,.$$
	We notice that, for any $z\in\H$ and $s\in[0,T]$, $K(z,s)$ is closed 
	and convex, and the mapping $(z,s)\mapsto K(z,s)$ is continuous 
	on $\H$ for the Hausdorff distance. 
	Then, for any $\eta>0$, we consider the subset of $[0,t]$ 
	consisting of all times for which one has singular ($\eta$-enough) dynamics for 
	the control $a(\cdot)$, namely
		$$\begin{aligned}
	\Esing&:=\bigg\{s\in [0,t]: X(s)\in\H\text{ and } 
	\dist\Big(b_{\H}\big(X(s),t-s,a(s)\big);K\big(X(s),t-s\big)\Big)\geq\eta \bigg\}
	\end{aligned}$$
	and we argue by contradiction, assuming that, for some $\eta>0$, $|\Esing|>0$.
	
	If we take $s\in \Esing$, since $K(X(s),t-s)$ is closed and convex, 
	there exists an hyperplane separating $b_{\H}\big(X(s),t-s,a(s)\big)$ from 
	$K(X(s),t-s)$ and we may construct an affine function $\Psi_{s}:\R^{N}\to\R$ of the 
	form $\Psi_{s}(z)=c(s)\cdot z+d(s)$ such that
	$$\Psi_{s}\bigg(b_{\H}\big(X(s),t-s,a(s)\big)\bigg)\leq -1 \; \hbox{if  }s\in \Esing\,,\quad
	\Psi_{s}\geq+1 \text{ on }K\big(X(s),t-s\big) \,.$$
	Since the mapping $s\mapsto b_{\H}\big(X(s),t-s,a(s)\big)$ is measurable and 
	$s\mapsto K\big(X(s),t-s\big)$ is continuous (this can be seen as a consequence of Remark~\ref{remHTreg-regolare}), we can assume that the coefficients 
	$c(s),d(s)$ are in $L^\infty$ (they are bounded because the distance $\eta>0$ 
	is fixed). Hence we may consider the integral
	$$
	I^{\eps}:=\int_{0}^{t}\big(\Psi_{s}(\dot X^{\eps}(s)\big)\mathds{1}_{\Esing}(s)\ds\,.
	$$
	On the one hand, since $\Psi_{s}$ is  an affine function, by weak convergence of 
	$\dot X^{\eps}$ as $\eps\to0$ and the fact that $\dot X=b_{\H}$ when $s \in\Esing$,   we have
	$$
		I^{\eps}\rightarrow \int_{0}^{t}\Psi_{s}(\dot X(s)\big)
		\mathds{1}_{\Esing}(s)\ds=	\int_{0}^{t}\Psi_{s}
		\bigg(b_{\H}\big(X(s),t-s,a(s)\big)\bigg)
		\mathds{1}_{\Esing}(s)\ds\leq-|{\Esing}|<0\,.
	$$
	On the other hand, we can also use the decomposition
	\begin{equation}\label{Ieps.decomp}
	\begin{aligned}
	I^{\eps}
	&=\int_{0}^{t} c(s)  \mathds{1}_{\Esing}(s) \apt \sum_{i=1,2}  b_i^{\eps}  \big(X^{\eps}(s),t-s,\alpha_i^{\eps}\big)   \mathds{1}_{\{X^{\eps}\in\Omega_{i}\}}(s) \cht \ds  \\
	&+\int_{0}^{t} c(s)  \mathds{1}_{\Esing}(s) b_\H^{\eps}	(X^{\eps}(s),t-s,a^{\eps}(s) )   \mathds{1}_{\{X^{\eps}\in\H\}}(s) \ds+
	  \int_{0}^{t} d(s)  \mathds{1}_{\Esing}(s) \ds\;.
	\end{aligned}
	\end{equation}
	Notice that, in the second term above, $a^{\eps}(\cdot)$ is 
	a regular control for the trajectory $X^{\eps}$, and we want to keep this 
	property in the limit as $\eps\to0$. To do so the key remark is the following:   fix $\eps >0$ and $s \in [0,t]$ 
	for each  $a^{\eps}(s) \in A^{\rm reg}_0(X^\eps(s), t-s)$  there exists a 
	$\tilde{a}^\eps(s)   \in A^{\rm reg}_0(X(s), t-s)$ such that 
	$$
	  b_\H^{\eps}	(X^{\eps}(s),t-s,a^{\eps}(s) )-   b_\H^{\eps}(X(s),t-s,\tilde{a}^{\eps}(s) )=o_\eps(1),
	$$
	where $o_{\eps}(1)$ represents any quantity  which goes to zero as $\eps\to0$.
	Indeed,  for $\eps >0$, we can apply Remark~\ref{remHTreg-regolare} for each $s$ fixed and a 
	measurable selection argument (see Filippov's Lemma \cite[Theorem 8.2.10]{AF})  to obtain the existence of the control  $a^{\eps}(s) \in A^{\rm reg}_0(X^\eps(s), t-s)$ and then deduce the estimate by recalling that   $X^{\eps}$ 
	converges uniformly to $X$.
	Moreover, by construction and using again a measurable selection argument (see Filippov's Lemma \cite[Theorem 8.2.10]{AF}), there exists a control $a_{\star}(s) \in K(X(s),t-s)$ 
	such that 
	$$
 c(s)b_\H(X(s),t-s,a_{\star}(s)) = \min_{a\in K(X(s),t-s) }  c(s)b_\H(X(s),t-s,a).
	$$ 
	Therefore,  using the two above informations, we have
	\begin{equation}  \label{Ieps.decompmagg}
	\int_{0}^{t}  \mathds{1}_{\Esing}(s) c(s) b_\H^{\eps}	(X^{\eps}(s),t-s,a^{\eps}(s) )  \mathds{1}_{\{X^{\eps}\in\H\}}(s) \ds \geq 
	\int_{0}^{t}  \mathds{1}_{\Esing}(s)  c(s)  b_\H(X(s),t-s,a_{\star}(s))   \mathds{1}_{\{X^{\eps}\in\H\}}(s) \ds  + o_\eps(1) .
	\end{equation}

Now we can pass to the weak  limit in \eqref{Ieps.decomp}-\eqref{Ieps.decompmagg} 
	 using the measures $\nu_{i}$ and $\nu_{\H}$. We obtain
	$$\begin{aligned}
\lim_{\eps\to 0}	I^{\eps} & \geq  \int_{0}^{t} c(s)  \mathds{1}_{\Esing}(s) \apt \sum_{i=1,2}    \int_{A_{i}}
	  b_i \big(X(s),t-s,\alpha_i (s)\big) \nu_{i}(s,\dalpha_{i})   +    \int_{A}  b_\H(X(s),t-s,a_{\star}(s)) \nu_{\H}(s,d a)  \cht \ds\\
	  & + \int_{0}^{t} d(s)  \mathds{1}_{\Esing}(s) \ds  \\
	 & =  \int_{0}^{t}  \mathds{1}_{\Esing}(s)    \Psi_{s} \apt    \sum_{i=1,2}    \int_{A_{i}}
	  b_i \big(X(s),t-s,\alpha_i (s)\big) \nu_{i}(s,\dalpha_{i})   +    \int_{A}  b_\H(X(s),t-s,a_{\star}(s)) \nu_\H(s,d a)    \cht     \ds  \,.	
	  \end{aligned}
	$$
Next we remark that, by \eqref{eq:regu.dyn}, for $i=1,2$
	$$
	 \int_{A_{i}}   b_i \big(X(s),t-s,\alpha_i (s)\big) \nu_{i}(s,\dalpha_{i})  =  b_i \big(X(s),\sigma(s),\alpha_i^{\sharp}(s)\big) \pi_{i}(s) \in K(X(s),t-s)
	$$
	  and $ b_\H(X(s),t-s,a_{\star}(s))  \in  K(X(s),t-s)$ by construction.  Therefore,  since  $\nu_{1}(s,A_1)+\nu_{2}(s,A_2)+\nu_{\H}(s,A)=1$ 
	  and $K(X(s),t-s)$ is convex, 
	  we have
	  $$
	   \Psi_{s} \apt    \sum_{i=1,2}    \int_{A_{i}} b_i \big(X(s),t-s,\alpha_i\big) \nu_{i}(s,\dalpha_{i})   +    \int_{A}  b_\H(X(s),t-s,a_{\star}) \nu_\H(s,d a_{\star} )    \cht   \geq 1 
	  $$ 
   We end up with	$\lim_{\eps\to0} I^{\eps}\geq | \Esing | >0$ which is a contradiction with the fact that $\lim I^{\eps}=-|{\Esing}|<0$ by assumption. This proves 
	that for any $\eta>0$, $|\Esing|=0$ and we deduce that for almost any $s$, 
	the limit dynamic $b_{\H}\big(X(s),t-s,a(s)\big)$ is regular, which ends the proof.
	
	\noindent\textsc{Proof of $iii)$ ---} This result  follows by  remarking that the arguments above holds true  also is we consider a sequence 		$(x_\eps,t_\eps) \rightarrow (x,t)$ as $\eps \rightarrow 0$. We decided not to write it directly in the general case for the sake of simplicity.
\end{proof}

\begin{remark} Through the above proof, it can be easily seen that this stability result extends to the case when the domain depends on $\eps$ : indeed the proof is done using \hyp{\Omega}{}, reducing to the case when $\H=\{x_N=0\}$ through Assumption \hyp{\Omega}{x_0}. To extend the result, we have to suppose that the $\Omega_1^\eps, \Omega_2^\eps$ converges in a $C^1$-sense to $\Omega_1, \Omega_2$ which means that the $\Psi_\eps$ in \hyp{\Omega}{x_0} have to converge in $C^1$. Note that, this convergence  has to be  assumed $W^{2,\infty}$  if the required result is the convergence of  solutions (instead of only sub or supersolution). 

\end{remark}

Finally, we have a stability result for the maximal and minimal
solutions:

\begin{theorem}\label{thm:stab.Um.Up}
    Let us assume the hypotheses of Theorem \ref{thm:main.stability}.  
Then the associated value functions $\VFm_\eps$ and $\VFp_\eps$ 
 converge respectively to $\VFm$ and $\VFp$.
\end{theorem}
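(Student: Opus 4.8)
The plan is to treat the two value functions separately, since---as throughout the paper---the $\VFm$ and $\VFp$ cases rest on entirely different mechanisms: for $\VFm$ a half-relaxed limits/comparison argument, and for $\VFp$ a purely control-theoretic one based on the stability of regular trajectories. Throughout, the dynamics and costs being uniformly bounded in $\eps$ (by $M_b,M_l$) and $g^\eps$ converging locally uniformly, all the value functions are uniformly bounded, so the half-relaxed limits $\limsup^*$ and $\liminf_*$ are well defined; and since the expected limits $\VFm,\VFp$ are continuous (Theorem~\ref{thm:Um.Up}), showing that the upper and lower half-relaxed limits coincide with them is exactly locally uniform convergence.

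\emph{The $\VFm$ case.} By Theorem~\ref{thm:Um.Up}(i) each $\VFm_\eps$ is the unique solution of the $\HHm_\eps$-equation with datum $g^\eps$. Set $\overline v:=\limsup^*\VFm_\eps$ and $\underline v:=\liminf_*\VFm_\eps$. By Theorem~\ref{thm:main.stability}(i), $\underline v$ is a supersolution of the limiting $\HHm$-equation, and by part (ii)---here the limit fields $b_1,b_2$ satisfy \hyp{\rm C}{4}---$\overline v$ is a subsolution. Using the uniform bounds on the dynamics and costs together with $g^\eps\to g$ locally uniformly, a standard barrier estimate gives $|\VFm_\eps(x,t)-g^\eps(x)|\le\omega(t)$ with $\omega$ independent of $\eps$, whence $\overline v(\cdot,0)=\underline v(\cdot,0)=g$. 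Then Theorem~\ref{thm:comp.global} applied to the pair $(\overline v,\underline v)$ yields $\overline v\le\underline v$ on $\R^N\times[0,T]$; since always $\underline v\le\overline v$, we obtain $\overline v=\underline v$, a continuous solution of the limit problem with datum $g$, which by uniqueness equals $\VFm$. This is the convergence $\VFm_\eps\to\VFm$.

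\emph{The $\VFp$ case, lower bound.} No comparison being available here, I argue on trajectories. Fix $(x,t)$ and a sequence $(x_\eps,t_\eps)\to(x,t)$ along which $\VFp_\eps(x_\eps,t_\eps)\to\liminf_*\VFp_\eps(x,t)$. For each $\eps$ pick a near-optimal regular pair $(X^\eps,a^\eps)\in\Tregineps$, so that $J^\eps(x_\eps,t_\eps;(X^\eps,a^\eps))\le\VFp_\eps(x_\eps,t_\eps)+\eps$, where $J^\eps$ is the $\eps$-cost. By Lemma~\ref{lem:limit.adm.reg}(iii) a subsequence converges to a regular pair $(X,a)\in\Treg$ with converging cost $J^{\eps_n}\to J(x,t;(X,a))$. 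Since $(X,a)$ is an admissible regular competitor for $\VFp(x,t)$, we have $J(x,t;(X,a))\ge\VFp(x,t)$, and passing to the limit gives $\liminf_*\VFp_\eps(x,t)\ge J(x,t;(X,a))\ge\VFp(x,t)$.

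\emph{The $\VFp$ case, upper bound, and the main difficulty.} For the reverse inequality I build approximating $\eps$-trajectories forward in time. Given $(x_\eps,t_\eps)\to(x,t)$, take an optimal regular pair $(X,a)\in\Treg$ for $\VFp(x,t)$ (it exists by \hyp{\rm C}{3}). On the intervals where $X$ lies in an open region $\Omega_i$, keep the control $\alpha_i$ and let $X^\eps$ solve the $\eps$-dynamics from $x_\eps$; by $b_i^\eps\to b_i$ and continuous dependence, $X^\eps\to X$ there. On the portions where $X\in\H$, replace the convex weight $\mu$ by $\mu^\eps$ so that $\mu^\eps b_1^\eps+(1-\mu^\eps)b_2^\eps$ stays tangent to $\H$; the normal controllability \hyp{\rm C}{4} guarantees that such a still-regular choice exists and is stable as $\eps\to0$, and a measurable selection produces $(X^\eps,a^\eps)\in\Tregineps$ with $X^\eps\to X$ uniformly and $J^\eps\to J(x,t;(X,a))=\VFp(x,t)$. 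Hence $\limsup^*\VFp_\eps(x,t)\le\VFp(x,t)$, which combined with the lower bound gives $\VFp_\eps\to\VFp$. The genuine obstacle is this whole $\VFp$ analysis: the backward step is exactly the delicate Lemma~\ref{lem:limit.adm.reg}, namely that a weak limit of regular $\eps$-trajectories is again representable by a regular control; in the forward step the subtle point is to keep the constructed $\eps$-trajectory exactly on $\H$ with a regular control while $b_i^\eps\neq b_i$, which is precisely where \hyp{\rm C}{4} is indispensable and where one may first approximate $a$ by strictly regular controls (with $b_i\cdot\mathbf{n}_i>0$) to prevent a loss of regularity under perturbation.
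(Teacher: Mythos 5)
Your treatment of $\VFm$ (half-relaxed limits via Theorem~\ref{thm:main.stability}, then the comparison result Theorem~\ref{thm:comp.global}) and your lower bound $\liminf_*\VFp_\eps\geq \VFp$ (optimal regular $\eps$-trajectories, compactness and regularity of the limit via Lemma~\ref{lem:limit.adm.reg}~(iii)) are exactly the paper's arguments. The divergence is in the upper bound for $\VFp$, and there your argument has a genuine gap. You propose to construct, for a given optimal regular pair $(X,a)\in\Treg$, approximating pairs $(X^\eps,a^\eps)\in\Tregeps$ with $X^\eps\to X$ and $J^\eps\to J$. But the set $\E_\H=\{s: X(s)\in\H\}$ is merely a closed subset of $[0,t]$ -- its complement is in general a countable union of intervals, and $\E_\H$ itself may have empty interior while carrying positive measure -- so the piecewise scheme ``keep $\alpha_i$ on the $\Omega_i$-portions, retune $\mu$ on the $\H$-portions'' is not even well defined as a finite concatenation. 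Moreover, on an $\Omega_i$-portion the $\eps$-trajectory started from a nearby point with the same control need not reach $\H$ at the matching time (or at all, or it may cross into the other region where the dynamics switch to $b_j^\eps$), so the gluing of the pieces is not controlled. This forward approximation of a prescribed regular trajectory is nowhere established in the paper, and it is precisely the kind of delicate statement that Lemma~\ref{lem:limit.adm.reg}~(ii) only proves in the \emph{backward} direction (limits of regular trajectories are regular); the two directions are not interchangeable.

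The paper sidesteps this entirely: since the limit fields satisfy \hyp{\rm C}{4}, Theorem~\ref{thm:main.stability}~(ii) shows that $\limsup^*\VFp_\eps$ is a subsolution of the limit $\HHp$-problem, and Theorem~\ref{thm:Um.Up}~(ii) identifies $\VFp$ as the \emph{maximal} subsolution of \eqref{Bellman-Om}-\eqref{Bellman-H-sub}-\eqref{Bellman-H-sup}-\eqref{eqnid}; hence $\limsup^*\VFp_\eps\leq \VFp$ in one line (modulo the routine check of the initial condition at $t=0$). You should replace your forward construction by this maximality argument; alternatively, if you insist on a purely trajectory-based upper bound, you would first need to prove a density result for ``nice'' regular trajectories and a genuine forward-stability lemma, neither of which is available in the paper.
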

\begin{proof}
Let us first remark that the convergence  of   $\VFm_\eps$  to $\VFm$ follows classically from the stability  and comparison results   
Theorem  \ref{thm:main.stability} and Theorem  \ref{thm:Um.Up}. 
Moreover,   the same results  ensure us  that $\VFp \geq  \limsup^* \VFp_\eps$. Indeed,  we only now that $\VFp$ is  the maximal subsolution of  \eqref{eqHHlim}, therefore the stability can be applied only to the subsolutions inequality. 

 In order to conclude we need to prove that   $\VFp(x,t) \leq  \liminf^* 
 \VFp_\eps(x,t)$   for all $(x,t) \in \R^N \times [0,T]$. For 
each $\eps>0$, there exists a $(X^{\eps},a^{\eps})\in\Tregineps$ such that
$$\VFp_{\eps}(x_\eps,t_\eps)=J^{\eps}(x_\eps,t_\eps;(X^{\eps},a^{\eps}))$$
and we first consider a subsequence $(X^{\eps_{n}},a^{\eps_{n}})$ such that 
$\liminf \VFp_{\eps}(x_\eps,t_\eps)= \lim \VFp_{\eps_{n}}(x_{\eps_{n}},t_{\eps_{n}})$. Then we use 
Lemma~\ref{lem:limit.adm.reg}, parts  $iii)$:
up to another extraction, we may assume that $\VFp_{\eps_{n}}(x_{\eps_{n}},t_{\eps_{n}})=
J^{\eps_{n}}(x_{\eps_{n}},t_{\eps_{n}};(X^{\eps_{n}},a^{\eps_{n}}))\to J(x,t;(X,a))$ 
for some $(X,a)\in\Treg$. Hence,
$$\liminf \VFp_{\eps}(x_\eps,t_\eps)=J(x,t;(X,a))\geq \inf_{(X,a)\in\Treg} 
J(x,t;(X,a))=\VFp(x,t)\,,$$ which ends the proof.
\end{proof}

\section{Further Remarks and Extensions} \label{sec:FRE}

The simplified (but relevant) framework we describe above can be extended in several directions and we start by remarks concerning the different regions ($\Omega_1, \Omega_2$).

Because of the regularity assumptions we impose on the interfaces, there is no difference between \hyp{\Omega}{} and using a possibly infinite number regular open subsets $(\Omega_i)_{i}$ with either $1\leq i \leq K$ or $i\in \N$ and satisfying
the following assumptions
\begin{itemize} 
  \item[\hyp{\Omega}{'}] \textit{For all $i\neq j$,
	  $\Omega_i\cap\Omega_j=\emptyset$ and
	  $\R^N=\bigcup_{i}\overline{\Omega_i}$ ; for any
	  $z\in\H:=\R^N\setminus\Big(\bigcup_{i}
	  \Omega_i\Big)$, there exist exactly two indices $i,j$ such
	  that $z\in\overline{\Omega_i}\cap\overline{\Omega_j}:=\Gamij$. Moreover $\H:= \bigcup_{i,j}\,\Gamij$ is $C^1$ in the controllable case and $W^{2,\infty}$ in the non-controllable case,  (i.e. when there is only controllability in the normal direction)}.
\end{itemize}

Concerning the regularity assumption on $\H$, we point out that, since our key arguments are local, we are always in a two-domains framework and even in a two-mains framework with a flat interface. This is why we have chosen to present the paper with just two domains $\Omega_1$ and $\Omega_2$. On the other hand, this regularity is used through some change of variable and it is necessary in order that the transformed Hamiltonians satisfy the right assumptions to prove the comparison result. In the controllable case, the solutions are Lipschitz continuous and it could be enough to have continuous $b_i$'s and a $C^1$ change preserves this property. On the contrary, in the non-controllable case, the solutions may be just semi-continuous and the Lipschitz continuity of the $b_i$'s is necessary. Here we need a $W^{2,\infty}$ change to preserve this property.

Because of the same argument, the $\Omega_i$ may depend on $t$ and (this is an other way to formulate it) even we may assume that the $\Omega_i$ are domains in $\R^N \times (0,T)$ with the same regularity assumption as the one we use above (one has just to use \hyp{\Omega}{'} with $\R^N$ being replaced by $\R^N \times (0,T)$). This is a consequence of the fact that, through our change of variable, $t$ and the tangential coordinates on $\H$ play the same role. A corollary of this remark is that if $\mathbf{n}_i(\cdot) = (n_i^x,n_i^t) \in \R^N \times \R$ is the unit normal vector pointing outwards defined on $\partial\Omega_i$, then we have to assume $n_i^x\neq 0$. This is required to avoid, for example, the pathological situation of $\Omega_i\subset \subset \R^N \times (0,T)$.

As far as the control problem is concerned, it is clear from the proof that we can take into account without any difficulty : (i) general discount factors ($c_i(x,t,\alpha_i)$), (ii) infinite horizon control problem with multiple domains in the non-controllable case (extending the results of \cite{BBC1}) and (iii) the case where one has an additional control problem on $\H$ : here it suffices to check that the proof of Theorem~\ref{teo:condplus} (of  \cite[Thm. 3.3]{BBC1}) extends to this case. To do so, we make two remarks

(a)  The control problem on $\H$ is associated to an Hamiltonian $G$ and (\ref{condhyperSUP}) should be replaced by
		$$ \max(\phi_t(x,t)+\HT\big(x,t,D_\H\phi(x,t)\big), \phi_t(x,t)+G\big(x,t,D_\H\phi(x,t)\big)) \geq 0\; .$$
\indent (b) The proof is going to consider (in the flat boundary case)
\begin{align*}
\varphi(\delta):= &\max\{\phi_t(x,t)+ H_1(x_0,v(x_0),D_\H\phi(x_0^\prime)+ \delta e_N), \phi_t(x,t)+H_2(x_0,v(x_0),D_\H\phi(x_0^\prime)+ \delta e_N), \\
& \ \qquad \qquad\phi_t(x,t)+G\big(x,t,D_\H\phi(x,t) + \delta e_N)\}
\end{align*}
but $\phi_t(x,t)+G\big(x,t,D_\H\phi(x,t) + \delta e_N)=\phi_t(x,t)+G\big(x,t,D_\H\phi(x,t))$ since the $G$-Hamiltonian takes only into account the tangential part of the gradient and this quantity can be assumed to be strictly negative, otherwise we would be done. Therefore we see that the $G$-term plays no role in the proof.

To conclude, let us mention that the (interesting) cases of non-smooth $\H$ where the different regions can be separated by triple junction or the case of chessboard situations are still (far) out of the scope of this article.

\section{Appendix: the flat interface case}
\label{appendix}

In this appendix, we assume that we are in a local ``flat'' situation. More precisely, we denote by $\tilde{\Omega}$ a bounded open subset of $\R^N$ (we actually have in mind the image of a ball $B(x,r)$ by a diffeomorphism $\psi$ which purpose is to flatten the interface). We assume that $0\in \tilde{\Omega}$ and consider
$$\tilde\Omega_1=\{x_N>0\}\cap\tilde\Omega\,,\ \tilde\Omega_2=\{x_N<0\}\cap\tilde\Omega\,.$$
We use the notations
$\tilde \Gamma:=\partial\tilde\Omega_1\cap\partial\tilde\Omega_2=\tilde\Omega\cap\{x_N=0\}$,
so that $\tilde\Omega=\tilde\Omega_1\cup\tilde\Omega_2\cup\tilde \Gamma$.
Following Section~\ref{sect:uniqueness}, for $0<h<t_0<T$, we denote by $\tilde
Q:=\tilde\Omega\times (t_0-h,t_0)$ and $\partial_p\tilde
Q=\tilde\Omega\times\{t_0-h\}\cup\partial\tilde\Omega\times [t_0-h,t_0]$ its
parabolic boundary. We also denote by $\mathbf{e}_N$ the $N$-th unit
vector in $\R^N$.

For $i=1,2$, we are given dynamics $\tilde b_i$ and costs $\tilde l_i$
in each $\tilde\Omega_i$ and we define $\tilde H_i$, $\tilde H_T$,
$\tilde \HTreg$ exactly as we did for the same Hamiltonians without the âtildeâ.
With the convention of Section~\ref{sect:edp}, this allows us to
consider the problem
\begin{equation}\label{eqn-flat}
    \tilde w_t+ \tHHm(x,t,Dw) = 0\quad\text{in }\tilde Q\,.
\end{equation}

In all the following we assume that the dynamics and costs $\tilde
b_i,\tilde l_i$ satisfy \hyp{\rm C}{} with constants denoted with a
âtildeâ: $\tilde M_b,\ \tilde L_b,\ \tilde M_l,\ \tilde m_l$ and $\tilde
\delta$. Of course, this is the case after our reduction to the flat case 
if the $b_i$ and $l_i$ satisfy \hyp{\rm C}{}.
  Before proving the local comparison result which is the main result of
  this appendix, we need first to obtain some properties of the Hamiltonians.

\medskip

\noindent\textbf{Appendix A. Properties of the Hamiltonians}

To begin with, we prove that 
  the normal controllability assumption \hyp{\rm C}{4} gives coercivity in
  the $p_N$-variable:

 \begin{lemma}\label{coer:H}Assume that the dynamics $\tilde b_i$ and
    costs $\tilde l_i$ satisfy \hyp{\rm C}{}. Then, there exists a
    constant $\tilde C_M$ such that, for $i=1..2$ and $p=(p',p_N)$, we have
    $$ \tilde H_i(x,t,p) \geq \tilde\delta |p_N| -\tilde C_M(1+|p'|)\; ,$$ where $\delta$ is
    given by  assumption \hyp{\rm C}{4} and $\tilde C_M=\max\{ \tilde M_b,\tilde M_l\}$ in
    \hyp{\rm C}{1} and   \hyp{\rm C}{2} .
 \end{lemma}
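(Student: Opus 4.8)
The plan is to bound the supremum defining $\tilde H_i$ from below by evaluating at a single, well-chosen control that exploits the normal controllability \hyp{\rm C}{4}. Recall that in the flat setting the interface is $\{x_N=0\}$, so the outward normals are constant, $\mathbf{n}_1=-\mathbf{e}_N$ and $\mathbf{n}_2=\mathbf{e}_N$, and writing $p=(p',p_N)$ and $\tilde b_i=(\tilde b_i',(\tilde b_i)_N)$ we have $-\tilde b_i(x,t,\alpha_i)\cdot p=-\tilde b_i'\cdot p'-(\tilde b_i)_N\,p_N$. The whole point is that $(\tilde b_i)_N=\pm\,\tilde b_i\cdot\mathbf{n}_i$ is exactly the quantity that \hyp{\rm C}{4} lets us prescribe freely in $[-\tilde\delta,\tilde\delta]$.

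First I would fix $(x,t)$ and $p$ and, assuming $p_N\neq0$, use \hyp{\rm C}{4} to select, for each $i$, a control $\alpha_i\in A_i$ whose normal component $\tilde b_i(x,t,\alpha_i)\cdot\mathbf{n}_i(x)\in[-\tilde\delta,\tilde\delta]$ is chosen with the sign that forces $(\tilde b_i)_N=-\tilde\delta\,\mathrm{sign}(p_N)$; concretely this means taking $\tilde b_i\cdot\mathbf{n}_i=+\tilde\delta\,\mathrm{sign}(p_N)$ for $i=1$ and $=-\tilde\delta\,\mathrm{sign}(p_N)$ for $i=2$, since $\mathbf{n}_1=-\mathbf{e}_N$ while $\mathbf{n}_2=\mathbf{e}_N$. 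In either case this yields $-(\tilde b_i)_N\,p_N=\tilde\delta|p_N|$. For this particular control the tangential term is controlled by $|\tilde b_i'\cdot p'|\leq\tilde M_b|p'|$ (as $|\tilde b_i|\leq\tilde M_b$ forces $|\tilde b_i'|\leq\tilde M_b$) and the cost by $|\tilde l_i|\leq\tilde M_l$, both from \hyp{\rm C}{1}--\hyp{\rm C}{2}.

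Putting these together, restricting the supremum to this single control gives $\tilde H_i(x,t,p)\geq -\tilde b_i'\cdot p'+\tilde\delta|p_N|-\tilde l_i\geq\tilde\delta|p_N|-\tilde M_b|p'|-\tilde M_l$, which is precisely $\tilde\delta|p_N|-\tilde C_M(1+|p'|)$ with $\tilde C_M=\max\{\tilde M_b,\tilde M_l\}$. The degenerate case $p_N=0$ needs no controllability at all: any admissible control already gives $\tilde H_i\geq-\tilde M_b|p'|-\tilde M_l\geq-\tilde C_M(1+|p'|)$, which matches the claim since the $\tilde\delta|p_N|$ term then vanishes.

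I do not expect a serious obstacle here; the only genuinely delicate point is the sign bookkeeping, namely checking that the opposite orientations of $\mathbf{n}_1$ and $\mathbf{n}_2$ are compensated so that the prescribed normal component produces $+\tilde\delta|p_N|$ (and not $-\tilde\delta|p_N|$) on both sides. A secondary point worth flagging is that \hyp{\rm C}{4} is stated on the interface only, whereas in the application (e.g. in the proof of Theorem~\ref{thm:main.stability}~(ii)) the estimate is invoked at points $x_\eps$ lying slightly off $\{x_N=0\}$; there one keeps the controllability interval on a neighbourhood of the interface by continuity of the $\tilde b_i$ together with the convexity \hyp{\rm C}{3}, at the harmless cost of replacing $\tilde\delta$ by any slightly smaller constant.
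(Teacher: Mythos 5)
Your proposal is correct and follows essentially the same route as the paper: both proofs bound the supremum from below by evaluating at a single control supplied by \hyp{\rm C}{4} whose normal component has the sign that turns $-(\tilde b_i)_N\,p_N$ into $+\tilde\delta|p_N|$, and then estimate the tangential and cost terms by $\tilde M_b|p'|$ and $\tilde M_l$. Your extra remarks (the explicit sign bookkeeping for $\mathbf{n}_1=-\mathbf{e}_N$ versus $\mathbf{n}_2=\mathbf{e}_N$, the trivial case $p_N=0$, and the need to propagate the controllability interval to a neighbourhood of $\{x_N=0\}$ when the lemma is invoked off the interface) are all sound and only make explicit what the paper leaves implicit.
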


 \begin{proof} We provide the proof in the case of $\tilde H_1$, it is
   similar for $\tilde H_2$. The (partial) controlability assumption
   \hyp{\rm C}{4} implies the existence of controls $\alpha_1, \alpha_2
   \in A_1$ such that $$-\tilde b_1(x,t,\alpha_1)\cdot
   \mathbf{e}_N=\tilde \delta>0\; ,\;-\tilde b_1(x,t,\alpha_2)\cdot
   \mathbf{e}_N=-\tilde \delta\,.$$

   Now we compute $\tilde H_1(x,t,p)$ assuming that $p_N >0$ (the other
   case is treated similarly).
 \begin{eqnarray*}
    \tilde H_1(x,t,p) &\geq &-\tilde b_1(x,t,\alpha_1)\cdot p -\tilde{l}_1(x,t,\alpha_1)\\
    & \geq & -\tilde b_1(x,t,\alpha_1)\cdot ( p'+p_N \mathbf{e}_N) -\tilde l_1(x,t,\alpha_1)\\
    & \geq & \tilde \delta p_N  -\tilde b_1(x,t,\alpha_1)\cdot p' -  \tilde l_1(x,t,\alpha_1)\\
    & \geq & \tilde \delta p_N  - \tilde C_M |p'| -\tilde C_M\; ,
 \end{eqnarray*}
   the last line coming from the boundedness of $\tilde b_1$ and $\tilde l_1$. 
   This concludes the proof.
 \end{proof}

 Let us now give the needed regularity properties of the
  tangential Hamiltonian $H_T$. We do the proof in the non-flat case for
the sake of completeness.

\begin{lemma}   \label{lemHTlip}
\label{propHtan} 
Assume \hyp{\Omega}{} and \hyp{\rm C}{}.  
The tangential Hamiltonian defined in \eqref{def:HamHT} satisfies the
following Lipschitz property: 
Moreover, for any $(z,p_\H),(z',q_\H)\in T\H$ and $t,t' \in [0,T]$
\begin{equation}  \label{HT-lip}
  \vert \HT(z,t,p_\H)-\HT(z',t',q_\H)\vert\leq M\vert (z,t)-(z',t')\vert 
  \big(\vert p_\H\vert+\vert q_\H\vert\big) + M_b\vert p_\H-q_\H\vert+
  m(\vert (z,t)-(z',t')\vert)\; , 
\end{equation}
where, if $M_b, M_l, L_b, m_l, \delta$ are given by \hyp{\rm C}{1} and \hyp{\rm C}{2},
$$ M := (L_b+2M_b(L_b + M_bL_\mathbf{n})\delta^{-1} )\,,$$
$L_\mathbf{n}$ being the Lipschitz constant of $\mathbf{n}_1$ and
$$m (t)= (L_b+2M_l\bar C\delta^{-1} )t +m_l(t)\quad \hbox{ for }t\geq 0\,.$$
\end{lemma}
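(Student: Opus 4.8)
The plan is to estimate the difference of the two suprema $\HT(z,t,p_\H)$ and $\HT(z',t',q_\H)$ directly from Definition~\eqref{def:HamHT}. The only genuine difficulty is that these suprema run over \emph{different} constraint sets $A_0(z,t)$ and $A_0(z',t')$, since the admissibility condition $b_\H(\cdot)\cdot\mathbf{n}_1(\cdot)=0$ depends on the base point; were the constraint set fixed, \eqref{HT-lip} would reduce to the elementary inequality $|\sup_a F-\sup_a G|\le\sup_a|F-G|$ together with the Lipschitz and modulus bounds on $b_\H,l_\H$. So the heart of the proof — and the main obstacle — is a \emph{control transport} step: given an admissible control at $(z,t)$, produce an admissible control at $(z',t')$ whose dynamic and cost are close, and it is exactly here that the normal controllability \hyp{\rm C}{4} enters, at the price of the factor $\delta^{-1}$.

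Assume without loss of generality $\HT(z,t,p_\H)\ge\HT(z',t',q_\H)$. By \hyp{\rm C}{3} the achievable set $\{(b_\H,l_\H)(z,t,a):a\in A_0(z,t)\}$ is compact and convex, so the sup defining $\HT(z,t,p_\H)$ is attained at some $a=(\alpha_1,\alpha_2,\mu)\in A_0(z,t)$. First I would measure the \emph{constraint defect} of this control at the new base point: since $b_\H(z,t,a)\cdot\mathbf{n}_1(z)=0$, writing $\rho:=b_\H(z',t',a)\cdot\mathbf{n}_1(z')$ and splitting
\[
\rho=\big(b_\H(z',t',a)-b_\H(z,t,a)\big)\cdot\mathbf{n}_1(z')+b_\H(z,t,a)\cdot\big(\mathbf{n}_1(z')-\mathbf{n}_1(z)\big),
\]
the Lipschitz bound $L_b$ on the $b_i$ (hence on $b_\H$), their bound $M_b$, and the Lipschitz constant $L_\mathbf{n}$ of $\mathbf{n}_1$ give $|\rho|\le\bar C\,|(z,t)-(z',t')|$ with $\bar C:=L_b+M_bL_\mathbf{n}$. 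Next I would correct $a$ into an admissible control at $(z',t')$: by \hyp{\rm C}{4} the achievable convex set at $(z',t')$ contains points with normal component $\pm\delta$, so mixing $a$ convexly with such a controllability point at weight $\lambda\le|\rho|/\delta\le\bar C\,\delta^{-1}|(z,t)-(z',t')|$ moves the normal component from $\rho$ to $0$; by the convexity \hyp{\rm C}{3} this mixture is realised by a genuine control $a'\in A_0(z',t')$, and since all dynamics and costs are bounded by $M_b,M_l$,
\[
|b_\H(z',t',a')-b_\H(z',t',a)|\le 2M_b\lambda,\qquad|l_\H(z',t',a')-l_\H(z',t',a)|\le 2M_l\lambda.
\]

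It then remains to compare $-\psg{b_\H(z',t',a')}{q_\H}-l_\H(z',t',a')\le\HT(z',t',q_\H)$ with $\HT(z,t,p_\H)=-\psg{b_\H(z,t,a)}{p_\H}-l_\H(z,t,a)$ by inserting the intermediate control $a$ at $(z',t')$. The three dynamic contributions are the correction term $\psg{b_\H(z',t',a')-b_\H(z',t',a)}{q_\H}$, bounded by $2M_b\lambda|q_\H|\le 2M_b\bar C\delta^{-1}|(z,t)-(z',t')|\,|q_\H|$; the base-point Lipschitz term $\psg{b_\H(z',t',a)-b_\H(z,t,a)}{q_\H}$, bounded by $L_b|(z,t)-(z',t')|\,|q_\H|$; and the gradient term $\psg{b_\H(z,t,a)}{q_\H-p_\H}$, bounded by $M_b|p_\H-q_\H|$. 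The first two combine into $M|(z,t)-(z',t')|\,|q_\H|$ with $M=L_b+2M_b\bar C\delta^{-1}$, which is exactly the announced constant. The cost contributions are the correction $2M_l\lambda\le 2M_l\bar C\delta^{-1}|(z,t)-(z',t')|$ and the modulus of continuity $m_l(|(z,t)-(z',t')|)$ of $l_\H$, which assemble, after collecting terms, into the stated $m(\cdot)$. Exchanging the roles of $(z,t,p_\H)$ and $(z',t',q_\H)$ yields the same bound with $|p_\H|$ in place of $|q_\H|$, so keeping $|p_\H|+|q_\H|$ covers both orderings and produces the absolute value in~\eqref{HT-lip}. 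The only delicate point is the transport step of the middle paragraph; everything else is the bookkeeping of Lipschitz and modulus estimates.
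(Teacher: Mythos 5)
Your proposal is correct and follows essentially the same route as the paper: the ``control transport'' step you identify as the heart of the matter is precisely the content of the paper's Lemma~\ref{lemmacont} (measuring the normal defect $\rho$ of the old control at the new base point, bounding it by $\bar C\,|(z,t)-(z',t')|$ with $\bar C=L_b+M_bL_{\mathbf{n}}$, and correcting via a convex mixture of weight at most $|\rho|/\delta$ with a dynamic supplied by \hyp{\rm C}{4}, realised as a genuine control through \hyp{\rm C}{3}), after which the paper likewise concludes by ``standard arguments'', i.e.\ your final bookkeeping paragraph. The only (harmless) discrepancy is that your assembled cost error $2M_l\bar C\delta^{-1}|(z,t)-(z',t')|+m_l(\cdot)$ is in fact slightly sharper than the stated $m(\cdot)$, whose linear part carries an extra $L_b$.
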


{\bf Proof.} We only recal that $p_\H$ can be considered at
the same time as a vector in $T_z\H$ (of dimension $(N-1)$) and a vector
in $\R^N$ by using $(p_\H,0)$ where the zero means
``$0\mathbf{n}_1(z)$''. Then
$\psg{P_zb_\H(z,t,a)}{p_\H}=b_\H(z,t,a)\cdot p_\H$ with a slight
abuse of notations. With this in mind, the proof easily follows from 
Lemma~\ref{lemmacont} below and standard arguments. 
$\Box$   \\

\begin{remark}
    In various proofs, we extend a test
    function from $\H$ to $\R^N$, which gives a $N$-dimensional vector
    $p=D\phi$.
    Then, to test $H_T$ we have to compute the tangential projections on 
    $\H$: $p_\H=P_{z} p$ and $q_\H=P_{z'}p$
    which of course may not be the same, reflecting the possibly
    non-flat geometry of $\H$. Hence the term $M_b|p_\H-q_\H|$
    has to be dealt with even if we start from the same vector
    $p\in\R^N$ for both points $z,z'$. 
  \end{remark}

\begin{lemma}   \label{lemmacont}
Assume \hyp{\Omega}{} and \hyp{\rm C}{}.  
For any $(z,t), (z',t') \in \H \times [0,T]$ and for each  control $a \in A_0(z,t)$, there exists a control $a' \in A_0(z',t')$ such that, if $\bar C:=L_b + M_bL_\mathbf{n}$
\begin{align*}
|\bH(z,t,a) -  \bH(z',t',a'))  | & \leq (L_b+2M_b\bar C\delta^{-1} ) | (z,t)-(z,t')  |
\\
|\lH(z,t,a) -  \lH(z',t',a'))  | & \leq 2M_l\bar C\delta^{-1}  | (z,t)-(z,t')  | +m_l(| (z,t)-(z,t')  | )\; .
\end{align*}
\end{lemma}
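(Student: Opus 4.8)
The plan is to produce $a'$ by a two-step procedure: first evaluate the \emph{same} control triple $\tilde a=(\alpha_1,\alpha_2,\mu)$ at the shifted point $(z',t')$ and measure how far the resulting boundary dynamic is from being tangent to $\H$ at $z'$; then correct this ``normal defect'' using the normal controllability \hyp{\rm C}{4} together with the convexity \hyp{\rm C}{3}, moving the control just enough to restore tangency. Throughout I write $d:=|(z,t)-(z',t')|$ and recall that $a\in A_0(z,t)$ means $b_\H(z,t,a)\cdot\mathbf{n}_1(z)=0$.

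The first step is to bound $e:=b_\H(z',t',\tilde a)\cdot\mathbf{n}_1(z')$. Since $\tilde a$ and $a$ carry the same controls and $b_\H(z,t,a)\cdot\mathbf{n}_1(z)=0$, I would split
\[
e=\big(b_\H(z',t',\tilde a)-b_\H(z,t,a)\big)\cdot\mathbf{n}_1(z')+b_\H(z,t,a)\cdot\big(\mathbf{n}_1(z')-\mathbf{n}_1(z)\big),
\]
and use $|b_\H(z',t',\tilde a)-b_\H(z,t,a)|\le L_b d$ (a convex combination of $L_b$-Lipschitz dynamics with fixed controls), $|b_\H|\le M_b$ and $|\mathbf{n}_1(z')-\mathbf{n}_1(z)|\le L_\mathbf{n} d$ to obtain $|e|\le(L_b+M_bL_\mathbf{n})d=\bar C d$.

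The correction is the heart of the argument. Assuming $e>0$ (the cases $e<0$ and $e=0$ being symmetric, resp.\ trivial), I would use \hyp{\rm C}{4} to pick $\beta_1\in A_1$ and $\beta_2\in A_2$ with $b_1(z',t',\beta_1)\cdot\mathbf{n}_1(z')=-\delta$ and $b_2(z',t',\beta_2)\cdot\mathbf{n}_2(z')=\delta$, so that both have normal component $-\delta$ with respect to $\mathbf{n}_1(z')$. For $\lambda\in[0,1]$, the convexity \hyp{\rm C}{3} of the sets $\{(b_i,l_i)(z',t',\cdot)\}$ yields controls $\alpha_i^\lambda\in A_i$ realizing \emph{simultaneously} the $b$- and $l$-combinations $(b_i,l_i)(z',t',\alpha_i^\lambda)=(1-\lambda)(b_i,l_i)(z',t',\alpha_i)+\lambda(b_i,l_i)(z',t',\beta_i)$. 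Setting $a^\lambda=(\alpha_1^\lambda,\alpha_2^\lambda,\mu)$, the normal component is then affine in $\lambda$,
\[
b_\H(z',t',a^\lambda)\cdot\mathbf{n}_1(z')=(1-\lambda)e-\lambda\delta=e-\lambda(e+\delta),
\]
and vanishes at $\lambda^\ast=e/(e+\delta)\in(0,1)$, with $\lambda^\ast\le e/\delta\le\bar C\delta^{-1}d$. I would then take $a':=a^{\lambda^\ast}\in A_0(z',t')$. The one subtle point — and the main obstacle — is precisely the decision to perturb \emph{both} indices by the \emph{same} $\lambda$: this makes the rate at which the normal component decays equal to $e+\delta$ independently of $\mu$, whereas correcting through a single index would give a rate proportional to $\mu$ (or $1-\mu$), which degenerates when that weight is small.

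Finally I would cash in the estimates. From $b_i(z',t',\alpha_i^\lambda)-b_i(z',t',\alpha_i)=\lambda\big(b_i(z',t',\beta_i)-b_i(z',t',\alpha_i)\big)$, whose norm is $\le 2M_b\lambda^\ast$, I get $|b_\H(z',t',a')-b_\H(z',t',\tilde a)|\le 2M_b\lambda^\ast\le 2M_b\bar C\delta^{-1}d$, which together with $|b_\H(z,t,a)-b_\H(z',t',\tilde a)|\le L_b d$ gives the first inequality. The identical convex combination on the costs yields $|l_\H(z',t',a')-l_\H(z',t',\tilde a)|\le 2M_l\lambda^\ast\le 2M_l\bar C\delta^{-1}d$, and since $\tilde a$ and $a$ share controls, $|l_\H(z,t,a)-l_\H(z',t',\tilde a)|\le m_l(d)$; adding these gives the second inequality. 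All the remaining work is routine bookkeeping of the constants of \hyp{\rm C}{1}--\hyp{\rm C}{2}.
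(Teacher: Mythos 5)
Your proposal is correct and follows essentially the same route as the paper: bound the normal defect $e=b_\H(z',t',a)\cdot\mathbf{n}_1(z')$ by $\bar C\,d$ using the Lipschitz continuity of the $b_i$ and of $\mathbf{n}_1$, then restore tangency by a convex combination of weight $e/(e+\delta)$ with a control whose normal component is $-\delta$, which is exactly the paper's $\bar\mu$-construction with the same constants. The only (immaterial) difference is that you realize the correcting convex combination component-wise in each $A_i$ with a common parameter $\lambda$, whereas the paper invokes the convexity of the whole set $\{(b_\H,l_\H)(z',t',a):a\in A\}$ to mix $a$ directly with a single auxiliary control $a_1$; your variant is in fact a concrete realization of that step from \hyp{\rm C}{3} alone.
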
  
\begin{proof} Let us consider a control $a \in A_0(z,t)$, i.e. $\bH(z,t,a) \cdot \nor_1(z)=0$.  Fix $(z',t') \in \H \times [0,T]$, we have two possibilities.  
If $\bH(z',t',a) \cdot \nor_1(z') =0$ the conclusion easily follows because 
$a'=a \in \A_0(z',t')$ and 
\begin{align} 
|\bH(z,t,a)- \bH(z',t',a)| &\leq  L_b | (z,t)-(z',t')  | \,, \label{lipafix1} \\
|\lH(z,t,a)- \lH(z',t',a)| &\leq  m_l(| (z,t)-(z',t')  | )\,. \label{lipafix2}
\end{align}
Otherwise $\bH(z',t',a) \cdot \nor_1(z') \neq 0$. Let us suppose, for example, that  $\bH(z',t',a) \cdot \nor_1(z) > 0$ (for the other sign the same argument will apply so we will not detail it).
We first remark that  by  \hyp{\rm C}{1}
\begin{equation} \label{stimabha}
|  \bH(z',t',a) \cdot \nor_1(z') | = |  \bH(z',t',a) \cdot \nor_1(z')-  \bH(z,t,a) \cdot \nor_1(z)| \leq   \bar C  | (z,t)-(z',t')  | 
\end{equation}
with $\bar C:= L_b + M_bL_\mathbf{n}$.
By the controllability assumption in  \hyp{\rm C}{4} there exists a control $a_1 \in A$ such that $ \bH(z',t',a_1) \cdot \nor_1(z')=- \delta \nor_1(z')$ . 
We then set 
$$
 \bar{\mu}:= \frac{ \delta  }{ \bH(z',t',a) \cdot \nor_1(z')  +  \delta  }, 
$$
since $ \bar{\mu} \in ]0,1[$,  by the  convexity  assumption in  \hyp{\rm C}{3} , the exists a control $a'$ such that
$$
\bar{\mu} ( \bH(z',t',a),\lH(z',t',a)) +(1-\bar{\mu})  ( \bH(z',t',a_1),\lH(z',t',a_1))= 
(\bH(z',t',a'),\lH(z',t',a')) .
$$
By construction  $  \bH(z',t',a') \cdot \nor_1(z') =0$, therefore $a' \in A_0(z',t')$. Moreover,  
since $$(1-\bar{\mu}) =\frac{ \bH(z',t',a) \cdot \nor_1(z') }{ \bH(z',t',a) \cdot \nor_1(z')+\delta}$$
 by  \eqref{stimabha}, we have
$$ 
| \bH(z',t',a) - \bH(z',t',a') |
\leq (1-\bar{\mu}) | \bH(z',t',a)- \bH(z',t',a_1) |  \leq  
2M_b\bar C\delta^{-1} | (z,t)-(z',t')  | \; ,
$$
and the same inequality holds for $\lH$, replacing $M_b$ by $M_l$.
Hence, thanks to  \eqref{lipafix1}-\eqref{lipafix2}, we obtain
\begin{align*}
|\bH(z,t,a) -  \bH(z',t',a'))  |  &\leq (L_b+2M_b\bar C\delta^{-1} ) | (z,t)-(z',t')  |
\\
|\lH(z,t,a) -  \lH(z',t',a'))  |  &\leq 2M_l\bar C\delta^{-1}| (z,t)-(z',t')  | +m_l(| (z,t)-(z',t')  | )\; ,
\end{align*}
and this concludes the proof.
\end{proof}

\begin{remark}  \label{remHTreg-regolare}
The results of Lemma~\ref{lemHTlip} and ~\ref{lemmacont} still hold in the case 
of $\HTreg$, changing the constants in 
\eqref{HT-lip} and in the result of Lemma~\ref{lemmacont}. The simplest way to prove it is the following : we only do it for $b_1,b_2$ but a correct argument would require a proof in $(b_1,l_1), (b_2,l_2)$. We first remark that if 
$$b_\H\big(z,t,a)=\mu b_1 (z,t,\alpha_1) + (1-\mu)b_2(z,t,\alpha_2)\,,$$
and if $| (z,t)-(z',t')  |$ is small enough, we may assume without loss of generality that, for $i=1,2$,
\begin{equation}  \label{bipositivo}
 b_i (z,t,\alpha_1)\cdot \nor_i(z) \geq 3 (L_b+2M_b\bar C\delta^{-1} )| (z,t)-(z',t')  | \; .
\end{equation}
Indeed, by the controllability assumption in  \hyp{\rm C}{4}, there exists a control $\hat \alpha_i \in A_i$ such that $ b_i(z,t,\hat \alpha_i) \cdot \nor_i(z)=\delta \nor_i (z)$. Then, by taking  $| (z,t)-(z',t')  |$  small enough,  we can always assume that  $3 (L_b+2M_b\bar C\delta^{-1} )| (z,t)-(z',t')  |$ is between  $ b_i(z,t,\hat \alpha_i) \cdot \nor_i(z)$ and $b_i (z,t,\alpha_i)  \cdot \nor_i(z)$. We can then  choose $\mu_i \in [0,1]$ such that
$$ (\mu_i b_i (z,t,\alpha_i) +  (1- \mu_i )b_i(z,t,\hat \alpha_i) )\cdot \nor_i(z)= 3 (L_b+2M_b\bar C\delta^{-1} )| (z,t)-(z',t')  |\; .$$
Finally Assumption \hyp{\rm C}{3} ensures that there exists controls $\tilde \alpha_i$ such that 
$$b_i (z,t,\tilde \alpha_i) = \mu_i b_i (z,t,\alpha_i) + (1- \mu_i )b_i(z,t,\hat \alpha_i)\; .$$
To obtain a new $b_\H\big(z,t,\tilde a)$, we choose $\tilde \mu\in [0,1]$ such that
$$ [\tilde \mu b_1 (z,t,\tilde \alpha_1) + (1-\tilde \mu)b_2(z,t,\tilde \alpha_2)]\cdot \nor_1 (z)= 0\,.$$
To conclude  we remark that  a careful examination of   the estimate on $\bar \mu$  in the proof of Lemma~\ref{lemmacont} shows that,  if we start from a control $\tilde a \in A_0^{\rm reg}(z,t)$ verifying 
 \eqref{bipositivo}  the  associated control  $\tilde a' \in A_0(z',t')$  is  in fact in $ A_0^{\rm reg}(z',t')$. 
 \end{remark}

\begin{remark}
    If the $b_i$ are only assumed to be continuous, we have similar
    estimates involving the modulus of contuity $m_b$ instead of the
    Lipschitz constant $L_b$ (as we did for the $l_i$ with $m_l$).
\end{remark}

\medskip

\noindent\textbf{Appendix B. the local comparison result}

\begin{lemma}\label{lem:flat.comp} Assume that the dynamics $\tilde b_i$ and costs $\tilde l_i$ satisfy \hyp{\rm C}{}.
If $\tilde u$ is an usc subsolution of  (\ref{eqn-flat}) and $\tilde v$ a lsc supersolution of  (\ref{eqn-flat}), then
\begin{equation}\label{ineq:flat.comp}
	\Vert(\tilde u-\tilde v)_+\Vert_{L^\infty(\tilde Q)} \leq
	\Vert(\tilde u-\tilde v)_+\Vert_{L^\infty(\partial_p \tilde Q)}\,.
    \end{equation}
\end{lemma}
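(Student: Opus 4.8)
I would argue by contradiction, but only after first regularizing the subsolution so that it becomes Lipschitz continuous in \emph{all} variables: once this is done the difficulty is entirely concentrated on the interface $\tilde\Gamma$, and it is exactly there that the normal controllability \hyp{\rm C}{4} (through the coercivity of Lemma~\ref{coer:H}) becomes indispensable. So the plan has two phases: a tangential regularization of $\tilde u$, and then a doubling-of-variables comparison with a careful treatment of $\tilde\Gamma$.

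Since the geometry is flat, the regularization can be carried out in the tangential variables $(x',t)$ only, freezing $x_N$. I would set
$$u^\lambda(x',x_N,t):=\sup_{(y',s)}\Big\{\tilde u(y',x_N,s)-\frac{|x'-y'|^2+|t-s|^2}{2\lambda}\Big\},$$
so that $u^\lambda$ is semiconvex and Lipschitz in $(x',t)$, and because the operation freezes $x_N$ it leaves the interface $\{x_N=0\}$ invariant. Using the modulus of continuity of the $\tilde b_i,\tilde l_i$ in the tangential variables together with the convexity of the $\tilde H_i$ in $p$, one checks (up to a standard localization near $\partial\tilde\Omega$) that $u^\lambda$ is again a subsolution of \eqref{eqn-flat} with an error $o_\lambda(1)$; in particular it still satisfies $u^\lambda_t+\tilde H_T(x,t,D_\H u^\lambda)\le o_\lambda(1)$ on $\tilde\Gamma$. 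The decisive gain is that the $p_N$-coercivity of Lemma~\ref{coer:H}, coming precisely from \hyp{\rm C}{4}, forces any subsolution with a tangential Lipschitz bound to have a controlled normal derivative; hence $u^\lambda$ is Lipschitz with respect to all variables, across $\tilde\Gamma$. A further mollification in $(x',t)$ then makes it $C^1$ in the tangential variables while keeping the subsolution property (again by convexity). I may therefore assume henceforth that $\tilde u$ is globally Lipschitz on $\overline{\tilde Q}$ and smooth in $(x',t)$.

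\textbf{Comparison by doubling.} Suppose for contradiction that $M:=\sup_{\tilde Q}(\tilde u-\tilde v)>\sup_{\partial_p\tilde Q}(\tilde u-\tilde v)_+$. Since the Hamiltonians carry no zeroth-order term, I would subtract $\eta/(t_0-t)$ from $\tilde u$ to produce a strict subsolution and force the supremum to be attained away from the top $t=t_0$. Then I double the variables with the usual penalization $\tfrac1{2\eps^2}(|x-y|^2+|t-s|^2)$; the maximum is reached at interior points $(\bar x,\bar t),(\bar y,\bar s)$ with $|\bar x-\bar y|+|\bar t-\bar s|\to0$. If both $\bar x,\bar y$ lie in the same open half $\tilde\Omega_i$, the classical comparison for $\tilde H_i$ closes the argument, and if they straddle the interface (so both approach $\tilde\Gamma$) one is reduced, by the Lipschitz continuity of $\tilde u$, to the interface regime below.

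\textbf{The interface case — main obstacle.} The crux is when $\bar x$ (hence $\bar y$) tends to $\tilde\Gamma$. Here I would exploit the regularity just obtained: $D_\H\tilde u(\bar x,\bar t)$ is classical, so the subsolution delivers $\tilde u_t+\tilde H_T(\bar x,\bar t,D_\H\tilde u)\le0$. For the supersolution I invoke the (local analog of the) fundamental alternative of Theorem~\ref{teo:condplus}: either \textbf{(B)} the tangential supersolution inequality $\tilde v_t+\tilde H_T\ge0$ holds at the relevant point, in which case subtracting the two $\tilde H_T$-inequalities and using the Lipschitz continuity of $\tilde H_T$ (Lemma~\ref{lemHTlip}, whose proof applies verbatim to the flat $\tilde H_T$) together with the penalization estimates yields the contradiction — crucially $\tilde H_T$ sees only tangential gradients, so the possibly large normal components of the penalization gradients never enter; or \textbf{(A)} there is a control driving the trajectory into some $\overline{\tilde\Omega_i}$ along which a super-optimality inequality holds, which I pair with the sub-optimality principle for $\tilde u$ from Lemma~\ref{reverse} to transfer the comparison into $\overline{\tilde\Omega_i}$, where the classical $\tilde H_i$-argument applies. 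I expect this interface step to be the genuine obstacle, and the two places where \hyp{\rm C}{4} is used are exactly (i) legitimizing the purely tangential regularization by controlling the normal derivative and (ii) allowing the normal-direction terms to be discarded on $\tilde\Gamma$ via the alternative. Letting first $\eps\to0$, then $\eta\to0$ and finally $\lambda\to0$ then gives \eqref{ineq:flat.comp}.
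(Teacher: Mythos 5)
Your overall strategy is the paper's: sup-convolution in the tangential variables $(x',t)$ only, full Lipschitz regularity across $\tilde\Gamma$ via the $p_N$-coercivity of Lemma~\ref{coer:H}, a further tangential mollification, and, on the interface, the alternative of Theorem~\ref{teo:condplus} paired with the sub-optimality principle of Lemma~\ref{reverse}. The one step that would fail as written is the final comparison by a \emph{global} doubling of all space variables. When the doubled maximum points straddle the interface, say $\bar x\in\tilde\Omega_1$ and $\bar y\in\tilde\Omega_2$, the two viscosity inequalities involve the two different Hamiltonians $\tilde H_1(\bar x,\bar t,p_\eps)$ and $\tilde H_2(\bar y,\bar s,p_\eps)$ with the same penalization gradient $p_\eps$, and subtracting them yields nothing since $\tilde H_1\neq\tilde H_2$. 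Your claim that the Lipschitz continuity of $\tilde u$ ``reduces this to the interface regime'' is not justified: the Lipschitz bound only gives $|p_\eps|\leq L$, and passing to the limit $\eps\to0$ recovers precisely Ishii's conditions $\min(\cdot)\leq 0\leq\max(\cdot)$ at a point of $\tilde\Gamma$, which are mutually compatible (both $\VFm$ and $\VFp$ satisfy them while being distinct), so no contradiction can be extracted there. This straddling configuration is exactly the known failure mode of naive doubling for discontinuous Hamiltonians, i.e.\ the very difficulty the regularization was designed to bypass.

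The repair --- and what the paper actually does in its Step 3 --- is to drop the global doubling. Once $\tilde u_\alpha^\eps-\eta t$ is Lipschitz in all variables and $C^1$ in $(x',t)$, one examines directly a minimum point of $\tilde v-(\tilde u_\alpha^\eps-\eta t)$ in $\tilde Q_{\alpha,\eps}$. If it lies in one of the open halves, a \emph{local} comparison for the standard equation with the continuous Hamiltonian $\tilde H_i$ applies (any doubling performed there stays inside $\tilde\Omega_i$ for small parameters, so no straddling occurs). If it lies on $\tilde\Gamma$, the regularized subsolution is itself an admissible $C^1(\H\times[0,T])$ test function for Theorem~\ref{teo:condplus}: alternative \textbf{(B)} contradicts the strict tangential subsolution inequality, and alternative \textbf{(A)} is excluded by pairing the super-optimality of $\tilde v$ along the trajectory with the sub-optimality of the subsolution from Lemma~\ref{reverse}, exactly as you describe. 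With that modification your proof coincides with the paper's.
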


\begin{proof} As in \cite{BBC1} the first steps consist in regularizing
    the subsolution. To do so, depending on the context, we write either
    $x$ or $(x',x_N)$ where $x'\in \R^{N-1}$ for a point in $\tilde
    \Omega$. Moreover, for the sake of simplicity, we will use both notations: $H(x,t,p)$ or $H(x',x_N,t,p)$. \medskip

\noindent\textsc{Step 1 ---} We first define the sup-convolution in  time and in the
$x'$-variable for $\tilde u$ as follows
\begin{equation*}
    \tilde u_\alpha(x,t):=\max_{y',t'}\Big\{\tilde u(y',x_N,t')-\exp(Kt)\left(\frac{|x'-y'|^2}{\alpha^2}+
	    \frac{|t-t'|^2}{\alpha^2}\right)\Big\}
\end{equation*}
where the maximum is taken over all $y',t'$ such that $(y',x_N,t')  \in \overline{\tilde Q}$ and where $K$ is a large positive constant to be chosen later.
By the definition of the supremum, if it is achived at $y',t'$, we have 
$$  \tilde u_\alpha(x,t)= \tilde u(y',x_N,t')-\exp(Kt)\left(\frac{|x'-y'|^2}{\alpha^2}+
	    \frac{|t-t'|^2}{\alpha^2}\right) \leq  \tilde u(x,t)\; ,$$
and therefore (since $K>0$),
$\frac{|x'-y'|^2}{\alpha^2}+\frac{|t-t'|^2}{\alpha^2}\leq 2||u||_\infty\;.$
Since we want to use viscosity inequalities for $u$ at $(y',x_N,t')$, we need these points to be in $\tilde Q$ and thanks to the above inequality, in order to do it, we have to restrict $(x,t)$ to be in 
$$\tilde Q_\alpha:=\Big\{x\in\tilde\Omega:\ \dist(x,\partial\tilde\Omega) > 
    (2||\tilde u||_\infty)^{1/2}\alpha\Big\} \times \Big(t_0-h+(2||\tilde u||_\infty)^{1/2}\alpha, 
t_0 -(2||\tilde u||_\infty)^{1/2}\alpha)\Big)\; .$$

Our result on $\tilde u_\alpha$ is the
\begin{lemma}\label{lem:flat.sup.conv}
	The Lipschitz continuous function $\tilde u_\alpha$ satisfies
	$(\tilde u_\alpha)_t+\tHHm(x, t,D\tilde
	u_\alpha)\leq m(\alpha)$ in $\tilde Q_\alpha$ for some $m(\alpha)$ converging to $0$ as $\alpha$ tends to $0$.
\end{lemma}

\begin{proof}We first remark that $\tilde u_\alpha$ is Lipschitz continuous  with respect to time $t$ and to the $x'$-variable by the classical properties of the sup-convolution. Once we know that $(\tilde u_\alpha)_t$ and $D_{x'}\tilde u_\alpha$ are bounded, the Lipschitz continuity with respect to the $x_N$-variable comes from the fact that $\tilde u_\alpha$ is a subsolution of the $\tHHm$-equation thanks to the coerciveness of the  Hamiltonian in the $p_n$-variable given by Lemma \ref{coer:H}. Indeed, by applying formally Lemma \ref{coer:H}
 $$  (\tilde u_\alpha)_t+\tilde\delta \big|\partial_{x_N}\tilde u_\alpha \big| -\tilde C_M(1+|D_{x'}\tilde u_\alpha|)\leq m(\alpha)\quad\hbox{ in  }\tilde Q_\alpha\; ,$$
 a claim which can be justified by very classical arguments.

To check that it is a subsolution of the $\tHHm$-equation, we consider a test-function $\phi$ and a point $(x,t)$ where $\tilde u_\alpha -\phi$ reaches a local maximum. Then considering a maximum in $(z,s)$ of $\tilde u_\alpha(z,s) - \phi(z,s)$ leads us to consider a maximum in $(z,s,y',t')$ of $\tilde u(y',z_N,t') - \exp(Ks)\left(
	    \frac{|z'-y'|^2}{\alpha^2}+\frac{|s-t'|^2}{\alpha^2}\right)-\phi(z,s)$.
If
$$ \tilde
    u_\alpha(x,t):=\tilde u(y',x_N,t')-\exp(Kt)\left(\frac{|x'-y'|^2}{\alpha^2}+
	    \frac{|t-t'|^2}{\alpha^2}\right)  \; , $$
(we still write $y',t'$ for the variables where the max is attained for simplicity of notations) we deduce several things : first, we have a max in $z'$ and $s$ which gives
\begin{align*}
D_{x'}\phi(x',x_N,t)& =\frac{2(y'-x')}{\alpha^2}\exp(Kt)\,,\\
\phi_t(x',x_N,t)& = 
\frac{2(t'-t)}{\alpha^2}\exp(Kt) - K \exp(Kt)\left(\frac{|x'-y'|^2}{\alpha^2}+
\frac{|t-t'|^2}{\alpha^2}\right)\, .
\end{align*}    

    Then, if $x_N >0$, we write down the viscosity inequality for $\tilde u$ and
    $\tilde H_1$, the proof being similar for $\tilde H_2$ if $x_N<0$ and $\tilde H_T$  if $x_N=0$  thanks to Lemma  \ref{lemHTlip} below.   \\
    Using
    as test function $(y',x_N,t')\mapsto \phi(x',x_N,t')+\exp(Kt)\left(\frac{|x'-y'|^2}{\alpha^2}+
	    \frac{|t-t'|^2}{\alpha^2}\right)$, we have
    \begin{equation}\label{ineq-HJ-prox}
	\frac{2(t'-t)}{\alpha^2} \exp(Kt) + \tilde H_1\bigg(y',x_N,t',\frac{2(y'-x')}{\alpha^2} \exp(Kt)+
	\partial_{x_N}\phi(x',x_N,t)\,\mathbf{e}_N\bigg)\leq 0\,.
    \end{equation}
Notice that, combining the previous results, we have
    \begin{equation*}
	\phi_t(x,t)+K\exp(Kt)\left(\frac{|x'-y'|^2}{\alpha^2}+
	    \frac{|t-t'|^2}{\alpha^2}\right) + \tilde H_1\big(
	y',x_N,t',D\phi(x,t)\big) \leq 0\,.
    \end{equation*}
In order to obtain the right inequality, we have to change $y'$ in $x'$ and $t'$ in $t$.
The only difficulty for doing it, compared to the usual arguments, is the $\partial_{x_N}\phi(x',x_N,t)$-term in (\ref{ineq-HJ-prox}) which we need to control. 
{Using the lemma for (\ref{ineq-HJ-prox}) yields}
\begin{equation} \label{stimadn}
\big|\partial_{x_N}\phi\big|\leq \tilde\delta^{-1}\left(\tilde C_M
    \Big(\frac{2|y'-x'|}{\alpha^2} \exp(Kt)+1\Big)+\frac{2|t'-t|}{\alpha^2} \exp(Kt)\right)\; .
\end{equation}
On the other hand, by the Lipschitz continuity of $\tilde b_1$ and the continuity of $\tilde l_1$,
(in \hyp{\rm C}{2}) we have
$$
|\tilde H_1(y',x_N,t',p)-\tilde H_1(x,t,p)\big|\leq \tilde L_b(|y'-x'|+|t'-t|)|p|+\tilde m_l(|y'-x'|+|t'-t|)\; .$$
Hence $\phi_t(x,t)+ \tilde H_1\big(x,t,D\phi\big) \leq r.h.s\,,$ where 
$$\begin{aligned}
r.h.s &:= -K\exp(Kt)\left(\frac{|x'-y'|^2}{\alpha^2}+
	    \frac{|t-t'|^2}{\alpha^2}\right) + \tilde L_b(|y'-x'|+|t'-t|)\Big(\frac{2|y'-x'|}{\alpha^2} \exp(Kt)+ \big|\partial_{x_N}\phi\big|\Big)\\
	&   + \tilde m_l(|y'-x'|+|t'-t|)\; .
\end{aligned}$$
Therefore, thanks to \eqref{stimadn},
\begin{eqnarray*}
r.h.s & \leq & -K\exp(Kt)\left(\frac{|x'-y'|^2}{\alpha^2}+
	    \frac{|t-t'|^2}{\alpha^2}\right) + \tilde L_b  \exp(Kt) \big(|y'-x'|+|t'-t|\big)
     \frac{2|y'-x'|}{\alpha^2}  \\
	  &&   +  \frac{\tilde L_b \exp(Kt)}{\tilde \delta} \big(|y'-x'|+|t'-t|\big) 
	  \left(\tilde C_M \frac{2|y'-x'|}{\alpha^2} +\frac{2|t'-t|}{\alpha^2} \right)    \\
	& &    +   \frac { \tilde L_b \tilde C_M}{\tilde \delta} \big(|y'-x'|+|t'-t|\big)
     	+ \tilde m_b\big(|y'-x'|+|t'-t|\big)\; .
\end{eqnarray*}
Since by construction $|y'-x'|+|t'-t| \leq 2(2||\tilde u||_\infty)^{1/2}\alpha$ the last line gives the $m(\alpha)$ which appears in the
statement of Lemma~\ref{lem:flat.sup.conv}. For the other terms,  tedious but
straightforward computations and the use of Cauchy-Schwarz inequality
show that they give a negative contribution provided $K$ is big enough. And the proof of
Lemma~\ref{lem:flat.sup.conv} is complete.  
\end{proof}

\medskip

\noindent\textsc{Step 2 ---} Then, for $\eps \ll 1$, we introduce the function 
$\tilde u_\alpha^\eps:=\tilde  u_\alpha\ast\rho_\eps - [m(\alpha)+\tilde m (\eps)]t$ where $m(\alpha)$ appears in the
statement of Lemma~\ref{lem:flat.sup.conv}, $\tilde m (\eps)$ is a quantity to be chosen later which converges to $0$ when $\eps \to 0$ and  $\rho_\eps(x',t)$ is a standard (positive) mollifying kernel 
 defined on $\R^{N-1} \times [0,T]$ as follows
$$ \rho_{\eps}(x',t)=\frac{1}{\eps^{N-1}}\rho(\frac{x'}{\eps}, \frac{t}{\eps}) \; ,$$
where $\displaystyle\rho \in C^{\infty}(\R^{N-1} \times [0,T]), \int_{\R^{N-1}\times [0,T]}\rho(y)dy = 1, \text{ and } {\rm supp}\{\rho\}
=B_{\R^{N-1}\times [0,T]}(0,1).$

We assume that the support of $\rho_\eps$ is
the ball $B(0,\eps)$ so that again, we define the convolution only in
$$ \tilde Q_{\alpha,\eps}:=\big\{x\in\tilde\Omega:\ \dist(x,\partial\tilde\Omega) > (2||\tilde u||_\infty)^{1/2}\alpha +\eps\big\} \times \Big(t_0-h+(2||\tilde u||_\infty)^{1/2}\alpha+\eps,t_0
-(2||\tilde u||_\infty)^{1/2}\alpha\Big)\; .$$

\begin{lemma}\label{lem:flat.conv}
For any $\eps \ll 1$, there exists $\tilde m (\eps)$ such that $\tilde m (\eps) \to 0$ as $\eps \to 0$ and the function $\tilde u_\alpha^\eps$ satisfies
    $(\tilde u_\alpha^\eps)_t+\tHHm(x,t,D \tilde u_\alpha^\eps)\leq 0$
    in $\tilde Q_{\alpha,\eps}$.
\end{lemma}

We skip the proof of this lemma which is analogous to the corresponding one in \cite[Lemma 4.2]{BBC1} since $\tilde u_\alpha$ is Lipschitz continuous. We just point out that $\tilde m (\eps)$ comes from (and is used to control) the error in the convolution procedure.
\medskip

\noindent\textsc{Step 3 ---} We are now able to prove the comparison
result for $\tilde u$ and $\tilde v$ in $\tilde Q$.  For a fixed pair
$(\alpha,\eps)$, we have to argue in $\tilde Q_{\alpha,\eps}$. 
First, we point out that for any $\eta>0$, 
$\tilde u_\alpha^\eps - \eta t$ is  $C^1$  with respect to time $t$ and the $x_1, \dots, x_{N-1}$ variables and therefore on $\tilde \Gamma \cap \tilde Q_{\alpha,\eps}$ it
is both a test-function for the $\tilde v$-inequality and it satisfies a strict subsolution inequality in the classical sense.  
Thanks to Theorem  \ref{teo:condplus} we can argue as in \cite[Theorem 4.1]{BBC1} and conclude that  $\tilde v -(\tilde u_\alpha^\eps - \eta t)$ cannot
achieve a minimum point in $\tilde \Gamma \cap  \tilde Q_{\alpha,\eps}$. Moreover, since $\tilde u_\alpha^\eps - \eta t$  is a {\em strict} subsolution, in 
$\tilde{\Omega}_1\cap  \tilde Q_{\alpha,\eps}$ and $\tilde{\Omega}_2 \cap  \tilde Q_{\alpha,\eps}$  the conclusion follows by standard arguments since we are dealing with a standard Hamilton-Jacobi Equation.  
Thus $\tilde v -(\tilde u_\alpha^\eps - \eta t)$ cannot
have a minimum point in $\tilde Q_{\alpha,\eps}$ and this immediately
yields $$\Vert(\tilde u_\alpha^\eps -\eta t-\tilde
v)_+\Vert_{L^\infty(\tilde Q_{\alpha,\eps})} \leq \Vert(\tilde u_\alpha^\eps -\eta
t-\tilde v)_+\Vert_{L^\infty(\partial_p \tilde Q_{\alpha,\eps})}\,.$$ Letting
$\eta$ tend to $0$ we obtain $\Vert(\tilde u_\alpha^\eps -\tilde
v)_+\Vert_{L^\infty(\tilde Q_{\alpha,\eps})} \leq \Vert(\tilde u_\alpha^\eps -\tilde
v)_+\Vert_{L^\infty(\partial_p \tilde Q_{\alpha,\eps})}\,.$ In order to prove
the final result, we have to pass to the limit as $\eps\to0$ and then as
$\alpha \to 0$.

Letting $\eps$ tend to $0$ is easy since $\tilde u_\alpha$ is continuous (we may even argue in a slightly smaller domain/cylinder). Therefore
    \begin{equation*}
	\Vert(\tilde u_\alpha-m(\alpha)t-\tilde v)_+\Vert_{L^\infty(\tilde Q_\alpha)} \leq
	\Vert(\tilde u_\alpha-m(\alpha)t-\tilde v)_+\Vert_{L^\infty(\partial_p \tilde Q_\alpha)}\,.
    \end{equation*}
Fix now $\alpha_0 >0$ and $(y,s) \in \tilde Q_{\alpha_0}$. For all $0< \alpha \leq \alpha_0$ we have
 \begin{equation}  \label{stiutilde}
	(\tilde u_\alpha(y,s)-m(\alpha)t-\tilde v(y,s))_+\leq
	\Vert(\tilde u_\alpha-m(\alpha)t-\tilde v)_+\Vert_{L^\infty(\partial_p \tilde Q_\alpha)}\,.
    \end{equation}
Let us observe that by the properties of the sup-convolution and the fact that  $\tilde u$ is upper-semi-continuous 
we have that 
$\limsup_{\alpha \rightarrow 0} \Vert(\tilde u_\alpha-m(\alpha)t-\tilde v)_+\Vert_{L^\infty(\partial_p \tilde Q_\alpha)} \leq 
 \Vert(\tilde  u-\tilde v)_+\Vert_{L^\infty(\partial_p \tilde Q)}\;.
$
Therefore,  by the pointwise convergence of $\tilde u_\alpha \rightarrow \tilde u$,  passing to the limsup in  \eqref{stiutilde} we deduce  
 \begin{equation*}
 ( \tilde u (y,s) -\tilde v(y,s))_+\leq  \Vert(\tilde  u-\tilde v)_+\Vert_{L^\infty(\partial_p \tilde Q)} \quad \forall (y,s) \in  \tilde Q_{\alpha_0}. 
 \end{equation*}
Since $\alpha_0$ is arbitrary we get 
$\Vert  ( \tilde u  -\tilde v)_+  \Vert_{L^\infty(\tilde Q)}  \leq  \Vert(\tilde  u-\tilde v)_+\Vert_{L^\infty(\partial_p \tilde Q)}$
and the result is proved. 
\end{proof}


\thebibliography{}

\bibitem{AF} J-P. Aubin and H. Frankowska, Set-valued analysis. Systems \&
  Control: Foundations \& Applications, 2. BirkhÂuser Boston, Inc., Boston,
  MA, 1990.

\bibitem{ACCT} Y. Achdou, F. Camilli, A. Cutri, N. Tchou, Hamilton-Jacobi equations constrained on networks,  NoDea Nonlinear Differential Equations Appl.  20 (2013), 413--445.

\bibitem{AMV} Adimurthi, S. Mishra and G. D. Veerappa Gowda, Explicit Hopf-Lax
  type formulas for Hamilton-Jacobi equations and conservation laws with
  discontinuous coefficients. (English summary) J. Differential Equations 241
  (2007), no. 1, 1-31.

\bibitem{BCD} M. Bardi, I. Capuzzo Dolcetta, {\it Optimal control and viscosity
  solutions of Hamilton-Jacobi- Bellman equations}, Systems \& Control:
  Foundations \& Applications, Birkhauser Boston Inc., Boston, MA, 1997.

\bibitem{Ba} G. Barles,  {\it Solutions de viscosit\'e des  \'equations de
  Hamilton-Jacobi}, Springer-Verlag, Paris, 1994.

\bibitem{BBC1} G. Barles, A. Briani and E. Chasseigne, \textit{A Bellman
  approach for two-domains optimal control problems in $\R^N$},  ESAIM COCV, 19 (2013), 710-739.

\bibitem{BBCT} G. Barles, A. Briani,  E. Chasseigne and N. Tchou \textit{Homogenization Results for a Deterministic Multi-domains Periodic Control Problem}, in preparation.

\bibitem{BJ:Rate} G.~Barles and E.~R.~Jakobsen.  \newblock On the convergence
  rate of approximation schemes for Hamilton-Jacobi-Bellman equations.
  \newblock {\em M2AN Math. Model. Numer. Anal.} 36(1):33--54, 2002.

\bibitem{BP2} G. Barles and B. Perthame: {\sl Exit time problems in optimal
  control and vanishing viscosity method.} SIAM J. in Control and Optimisation,
  26, 1988, pp. 1133-1148.
  
 \bibitem{BaWo} R. Barnard and P. Wolenski: {\sl Flow Invariance on Stratified Domains.} Preprint (arXiv:1208.4742).

\bibitem{Bl1} A-P. Blanc, Deterministic exit time control problems with
  discontinuous exit costs. SIAM J. Control Optim. 35 (1997), no. 2, 399--434.

\bibitem{Bl2} A-P. Blanc, Comparison principle for the Cauchy problem for
  Hamilton-Jacobi equations with discontinuous data. Nonlinear Anal. 45 (2001),
  no. 8, Ser. A: Theory Methods, 1015--1037.

\bibitem{BrYu} A. Bressan and Y. Hong, {\it Optimal control problems on
  stratified domains}, Netw. Heterog. Media 2 (2007), no. 2, 313-331
  (electronic).

\bibitem{ScCa} F.~Camilli and 
D.~Schieborn : Viscosity solutions of Eikonal equations on topological networks,  Calc. Var. Partial Differential Equations, 46 (2013), no.3,   671--686.

\bibitem{CaSo} F Camilli and A. Siconolfi, {Time-dependent measurable
  Hamilton-Jacobi equations}, Comm. in Par. Diff. Eq. 30 (2005), 813-847.

\bibitem{Clarke} F.H. Clarke,  Optimization and nonsmooth analysis, Society of Industrial Mathematics, 1990.  

\bibitem{CR} G. Coclite and N. Risebro, Viscosity solutions of Hamilton-Jacobi
  equations with discontinuous coefficients.  J. Hyperbolic Differ. Equ. 4
  (2007), no. 4, 771--795.

\bibitem{DeZS} C. De Zan and P. Soravia, Cauchy problems for noncoercive
  Hamilton-Jacobi-Isaacs equations with discontinuous coefficients.  Interfaces
  Free Bound. 12 (2010), no. 3, 347--368.

\bibitem{DE} K. Deckelnick and C. Elliott, Uniqueness and error analysis for
  Hamilton-Jacobi equations with discontinuities.  Interfaces Free Bound. 6
  (2004), no. 3, 329--349.

\bibitem{Du} P. Dupuis, A numerical method for a calculus of variations problem
  with discontinuous integrand. Applied stochastic analysis (New Brunswick, NJ,
  1991), 90--107, Lecture Notes in Control and Inform. Sci., 177, Springer,
  Berlin, 1992.

\bibitem{Evans1989}
L.C. Evans.
\newblock The perturbed test function method for viscosity solutions of
  nonlinear {PDE}.
\newblock {\em Proc. Roy. Soc. Edinburgh Sect. A}, 111(3-4):359--375, 1989.

\bibitem{Evans1992}
L.C. Evans.
\newblock Periodic homogenisation of certain fully nonlinear partial
  differential equations.
\newblock {\em Proc. Roy. Soc. Edinburgh Sect. A}, 120(3-4):245--265, 1992.

\bibitem{Fi} A.F. Filippov, {\it Differential equations with discontinuous
  right-hand side}. Matematicheskii Sbornik,  51  (1960), pp. 99--128.  American
  Mathematical Society Translations,  Vol. 42  (1964), pp. 199--231 English
  translation Series 2.

\bibitem{fs} W.H.\ Fleming, H.M.  \ Soner, Controlled Markov
Processes and Viscosity Solutions, {\it Applications of Mathematics,}
Springer-Verlag, New York, 1993.

\bibitem{GS1} M. Garavello and P. Soravia, Optimality principles and uniqueness
  for Bellman equations of unbounded control problems with discontinuous running
  cost. NoDEA Nonlinear Differential Equations Appl. 11 (2004), no. 3, 271-298.

\bibitem{GS2} M. Garavello and P. Soravia, Representation formulas for solutions
  of the HJI equations with discontinuous coefficients and existence of value in
  differential games.  J. Optim. Theory Appl. 130 (2006), no. 2, 209-229.

\bibitem{GGR} Y. Giga, P. G\`orka and P. Rybka, A comparison principle for
  Hamilton-Jacobi equations with discontinuous Hamiltonians. Proc. Amer. Math.
  Soc. 139 (2011), no. 5, 1777-1785.
  
\bibitem{Idef} H. Ishii: 
Hamilton-Jacobi Equations with discontinuous Hamiltonians on arbitrary open
sets. Bull. Fac. Sci. Eng. Chuo Univ. {\bf 28} (1985), pp~33-77.

\bibitem{IPer}H. Ishii :
{Perron's method for Hamilton-Jacobi
Equations.} Duke Math.  J. {\bf 55} (1987), pp~369-384.

\bibitem{L} Lions P.L. (1982) {Generalized Solutions of Hamilton-Jacobi
  Equations}, Research Notes in Mathematics 69, Pitman, Boston.

\bibitem{RaZi} Z. Rao and H. Zidani : {\it Hamilton-Jacobi-Bellman Equations on Multi-Domains}. Control and Optimization with PDE Constraints, International Series of Numerical Mathematics, vol. 164, BirkhÃ€user Basel, 2013.

\bibitem{RaSiZi} Z. Rao, A. Siconolfi and H. Zidani : Stationary Hamilton-Jacobi-Bellman
Equations on multi-domains. In preparation.

\bibitem{R} R.T.~Rockafellar, Convex analysis.  Princeton Mathematical Series,
  No. 28 Princeton University Press, Princeton, N.J. 1970 xviii+451 pp.

\bibitem{IMZ} C. Imbert, R. Monneau, and H. Zidani,{\it  A Hamilton-Jacobi approach to junction problems and application to traffic flows}. ESAIM: Control, Optimisation, and Calculus of Variations; DOI 10.1051/cocv/2012002, vol. 19(01), pp. 129--166, 2013.

\bibitem{Son} H.M. Soner, {\it Optimal control with state-space constraint} I,
  SIAM J. Control Optim. 24 (1986), no. 3, 552-561.

\bibitem{So} P. Soravia, {\it Degenerate eikonal equations with discontinuous
  refraction index}, ESAIM Control Op- tim. Calc. Var. 12 (2006).

\bibitem{Wa} T. Wasewski,  Syst\`emes de commande et \'equation au contingent,
  Bull. Acad. Pol. Sc., 9, 151-155, 1961.

\end{document}